\def\x{\mathbf{x}}
\begin{document}

\title{Local discontinuous Galerkin method for the Backward Feynman-Kac Equation\thanks{ This work was supported by the National Natural Science Foundation of China under Grant No. 12071195, and the AI and
	Big Data Funds under Grant No. 2019620005000775.}
}


\author{Dong Liu         \and
        Weihua Deng 
}


\institute{Dong Liu \at
               School of Mathematics and Statistics, Gansu Key Laboratory of Applied Mathematics and Complex Systems, Lanzhou University, Lanzhou 730000, P.R. China. \\
              \email{dliu19@lzu.edu.cn}           
           \and
           Weihua Deng \at
              School of Mathematics and Statistics, Gansu Key Laboratory of Applied Mathematics and Complex Systems, Lanzhou University, Lanzhou 730000, P.R. China.\\
 \email{dengwh@lzu.edu.cn} }

\date{Received: date / Accepted: date}

\maketitle

\begin{abstract}
Anomalous diffusions are ubiquitous in nature, whose functional distributions are governed by the backward Feynman-Kac equation. In this paper, the local discontinuous Galerkin (LDG) method is used to solve the 2D backward Feynman-Kac equation in a rectangular domain. The spatial semi-discrete LDG scheme of the equivalent form (obtained by Laplace transform) of the original equation is established. After discussing the properties of the fractional substantial calculus, the stability and optimal convergence rates $O(h^{k+1})$ of the semi-discrete scheme are proved by choosing an appropriate generalized numerical flux. The $L1$ scheme on the graded meshes is used to deal with the weak singularity of the solution near the initial time. Based on the theoretical results of a semi-discrete scheme, we investigate the stability and convergence of the fully discrete scheme, which shows the optimal  convergence rates $O(h^{k+1}+\tau^{\min\{2-\alpha,\gamma\delta\}})$. Numerical experiments are carried out to show the efficiency and accuracy of the proposed scheme. In addition, we also verify the effect of the central numerical flux on the convergence rates and the condition number of the coefficient matrix.
\keywords{Backward Feynman-Kac equation \and Fractional substantial calculus \and LDG method \and Generalized numerical flux \and Graded meshes \and $L1$ scheme}
\subclass{65D15 \and 35R11}
\end{abstract}
\section{Introduction}\label{Sec1}
The origin of Feynman-Kac  transform can be traced back to Richard Feynman's research on ``path integrals" in the 1940s. Later, Mark Kac realized that the solution of Schr\"{o}ndinger equation (heat equation with external potential term) describing the functional distribution of diffusion motion can be obtained by this transformation \cite{Kac-1949-65}. The functional of anomalous diffusion has attracted extensive attention of physicists with the in-depth study of non-Brownian motion and anomalous diffusion. Similar to the functional of Brownian motion, the functional of anomalous diffusion can be defined as
\begin{equation}\label{founctional}
	A_t=\int_0^t\kappa(Y_{E_s})ds,
\end{equation}
where $ \kappa(\x) $ is a bounded function on the state space of the stochastic process $Y_{E_t}$ with $E_t$ being the inverse of the driftless subordinator with L\'evy measure $\mu(dx):=-d \omega(x)$ that is independent of the Markov process $Y_t$.

The distribution of the functional defined by \eqref{founctional} is governed by the 2D backward equation for Feynman-Kac transform with the abstract form \cite{Chen-2021-53}
\begin{equation}\label{first equation}
	\partial_{t}^{\omega,\kappa(\x)}u(t,\x)=\mathcal{L} u(t,\x)-\kappa(\x)I_{t}^{\omega,\kappa(\x)}u(t,\x) ~ \textrm{on}~ (0,T]\times\Omega, 	
\end{equation}
where $\omega(t)$ is an unbounded right continuous decreasing function on $(0,\infty)$ that is
integrable on $(0,1]$ and has $\lim_{t\rightarrow+\infty}\omega(t)=0$, $ \mathcal{L} $ is the generator of the stochastic process $Y_t$, $ \partial_{t}^{\omega,\kappa(x)}u(t,\x) $ denotes the generalized time fractional derivative, defined by
\begin{equation}\label{TFD}
	\partial_{t}^{\omega,\kappa(\x)}u(t,\x)=\frac{\partial}{\partial t}I_{t}^{\omega,\kappa(\x)}(u(t,\x)-u(0,\x))
\end{equation}
with $I_{t}^{\omega,\kappa(\x)}$ being the generalized time fractional integral
\begin{equation}\label{TFI}
	I_{t}^{\omega,\kappa(\x)}u(t,\x):=\int_{0}^{t}e^{-\kappa(\x)(t-\tau)}\omega(t-\tau)u(\tau,\x)d\tau.
\end{equation}
The boundary condition of (\ref{first equation}) is periodic and the initial condition is
\begin{equation}\label{first equation2}
	u(0,\x)=u_{0}(\x) , \quad x\in\Omega,	
\end{equation}
where the rectangular region $ \Omega \subset \mathbb{R}^2$.

The main challenges for analyzing and solving \eqref{first equation} come from the spatiotemporal coupling and nonlocality of the generalized time fractional derivative \eqref{TFD}; it is harder to get analytical solution. So, finding an effective numerical method to solve \eqref{first equation} seems to be urgent.
Currently, there are many discussions for the numerical algorithms of fractional partial differential equations, such as discontinuous Galerkin method (DG) \cite{Deng-2013-47,Nie-2021-104,Sun-2020-365}, finite difference method \cite{Deng-2015-62,Sun-2021-161}, finite element method \cite{Deng-2008-47}, and spectral method \cite{Tian-2014-30}, etc. To the best of our knowledge, the DG method for \eqref{first equation} has not been
discussed, and the main challenge lies in that $ \kappa(\x) $ may be negative. In the paper, this problem has been overcome by using some special techniques given in Lemma \ref{L2-1} and Lemma \ref{L2-2} (see below).

In 1973, the DG method was used to solve the neutron transport equation by Reed and Hill and it allows the basis functions in each element to be relatively independent and requires information exchange by defining the numerical fluxes at the boundaries of adjacent elements. Since the 1990s, the Runge-Kutta discontinuous Galerkin method proposed by Cockburn and Shu \cite{Xu-2014-278,Zhang-2004-42} has been widely used. The DG method can flexibly deal with many problems that are not easy to deal with by continuous finite element method and it's conducive to the formation of adaptive mesh \cite{Wang-2020-38} and parallel computing. Nowadays, the DG method is widely used to solve the equations in physics, chemistry, biology, and atmosphere science, generally having the property of strong convection. Initially, the DG method was mainly used to solve and analyze first-order problems.
Due to the convenience of solving the first-order problem by DG method, Bassi and Rebay transformed the high-order equation into a first-order system by introducing auxiliary variables to solve the Navier-Stokes equation \cite{Bassi-1997-131}, and finally they solved the high-order problem conveniently by using the DG method. This method is called ``local discontinuous Galerkin method" (LDG)  with the reason that auxiliary variables can be solved in local elements. Later, Cockburn and Shu  \cite{Cockburn-1998-35} established the theoretical framework of LDG method.

Initially, the LDG method was used by Cockburn and Shu \cite{Cockburn-2007-32,Cockburn-1998-35} to solve the convection diffusion problem. Deng and Hesthaven \cite{Deng-2013-47} established a theoretical framework for LDG method to solve the spatial fractional diffusion equation in 2013. Since then, the LDG method has been widely used to solve the fractional diffusion equation with integral fractional Laplacian \cite{Nie-2021-104}, the 2D fractional diffusion equation \cite{Qiu-2015-298}, the fractional telegraph equation \cite{Wei-2014-51}, the time tempered fractional diffusion equation \cite{Sun-2020-365}, the fractional convection diffusion equation \cite{Xu-2014-52}, the fractional Burgers equation \cite{Mao-2017-336}, and the fractional Allen-Cahn equation \cite{Xia-2008-5}, etc. This method inherits the flexibility of DG method and it can better retain the physical properties of the model when dealing with models with poor regularity. The combination of the LDG method with the adaptive strategy can better reflect its advantages.

Although the DG method can be used to discretize the time derivatives \cite{Mustapha-2016-73}, more often it is used to approximate spatial operators. The existing discussions for the semi-discrete LDG scheme of spatial fractional partial differential equations are usually with the time classical derivatives  \cite{Deng-2013-47,Nie-2021-104,Xia-2008-5,Xu-2014-52}.
Because of the nonlocal property of fractional derivative, the theoretical analysis of LDG method for time fractional partial differential equation (TFPDE) is often directly based on its fully discrete scheme \cite{Huang-2020-151,Qiu-2015-298,Wei-2014-51,Wei-2014-38}. As far as we know, there are few studies on LDG semi-discrete schemes for TFPDE. Although the properties of continuous Riemann-Liouville time fractional derivatives provide ideas for studying the properties of other types of time fractional derivatives, there are still great challenges when $ \kappa(\x) $ is negative.  To overcome this problem, we will use Fourier transform and Cauchy integral theorem.

If the solution of the equation to be solved is sufficiently regular, the convergence order of $L1$ scheme of Caputo derivative on uniform meshes can reach $( 2-\alpha) $ in theory \cite{Liu-2007-225}. Since the solution of Caputo fractional derivative problem has weak singularity near the initial value \cite{McLean-2010-53,Stynes-2016-19}, i.e., $|u^{(l)}(t)|\leq C_u(1+t^{\alpha-l})$, \,$l=0,1,2$, the $L1$ scheme cannot reach the convergence order of $( 2-\alpha) $ on uniform meshes. Fortunately, the non-uniform mesh method can better overcome this problem. Stynes et al. \cite{Stynes-2017-55} proposed the $L1$ scheme based on graded meshes in order to overcome the weak singularity of the solution at the initial time in the time fractional reaction-diffusion equation. Huang and Stynes \cite{Huang-2020-367} proposed a finite element scheme based on graded meshes for the time fractional initial boundary value problem. Li et al. \cite{Li-2016-316} established the finite difference scheme on non-uniform meshes for nonlinear fractional differential equations. In this paper, we also use graded meshes to overcome the weak singularity of the solution of \eqref{first equation} near the initial value.

In the following, we take
\begin{equation*}
	\mathcal{L} =\Delta \quad \textrm{ and } \quad \omega(\tau)=\frac{\tau^{-\alpha}}{\Gamma(1-\alpha)},\quad 0<\alpha<1,
\end{equation*}
where $ \Delta $ is the Laplacian operator, i.e., the generator of Brownian motion, and $\Gamma(\cdot)$ is the Gamma function. Then Eq.~\eqref{first equation} is called the backward Feynman-Kac equation \cite{Carmi-2011-84,Carmi-2010-141,Deng-2015-62,urgeman-2009-103}.
In fact, according to the definition of the generalized time fractional derivative in \eqref{TFD}, $\partial_{t}^{\omega,\kappa(\x)}u(t,\x)$ can be rewritten as
\begin{equation}\label{TFD1}
	\partial_{t}^{\omega,\kappa(\x)}u(t,\x)=\frac{1}{\Gamma(1-\alpha)}\frac{\partial }{\partial  t}\int_{0}^{t}e^{-\kappa(\x)(t-\tau)}(t-\tau)^{-\alpha}(u(\tau,\x)-u(0,\x))d\tau.
\end{equation}
Then we can further get the equivalent form of  Eq.~\eqref{first equation} (see the Appendix)
\begin{equation}\label{equivalent}
	\Delta u(t,\x)=e^{-\kappa(\x)t}{}_0^C D_t^{\alpha}(e^{\kappa(\x)t}u(t,\x)):={}_0^C D_t^{\alpha,\kappa(\x)}u(t,\x),
\end{equation}
where $ {}_0^C D_t^{\alpha}u(t,\x) $ is the Caputo fractional derivative of order $ \alpha \in(0,1) $, defined by
\begin{equation*}
	{}_0^C D_t^{\alpha}u(t,\x)=\frac{1}{\Gamma(1-\alpha)}\int_{0}^{t}(t-\tau)^{-\alpha}\partial_{\tau}u(\tau ,\x)d\tau.
\end{equation*}	
This paper will focus on the construction and analysis of the numerical scheme for Eq.~\eqref{equivalent} with
periodic boundary and initial conditions \eqref{first equation2}.

Since $\kappa(\x)$ is a bounded function on $\Omega$, naturally the following hold: 

 $({\textbf{\textit{\romannumeral1}}})$ There exists a positive constant $C_{\kappa}$ such that
	\begin{equation}\label{assumptions1}
		|\kappa(\x)|\leq C_{\kappa}, \quad \x\in\Omega;
	\end{equation}

$({\textbf{\textit{\romannumeral2}}})$ There exist two positive constants $C_{min}$ and $C_{max}$ such that
	\begin{equation}\label{assumptions2}
		C_{min}\leq e^{-\kappa(\x)t}\leq C_{max}, \quad (t,\x)\in[0,T]\times\Omega.
	\end{equation}

The rest of this paper is organized as follows. The properties of fractional substantial calculus are proved in Section \ref{Sec2}. In Section \ref{Sec3}, we present the spatial semi-discrete LDG scheme of the equivalent form of the original equation using the generalized alternating numerical flux for two-dimensional space; and the $L^{2}$-stability and a priori error estimate are also proposed in Theorem \ref{Th3_1} and Theorem \ref{Th3_2}, respectively. In Section \ref{Sec4}, the fully discrete scheme is established by the L1 scheme of fractional substantial derivative on graded mesh. Based on the theoretical results of semi-discrete scheme, the fully discrete scheme is theoretically analyzed. Section \ref{Sec5} contains some numerical results. The paper concludes with some discussions in the last section. In the Appendix, we provide the proof for the equivalent form of the original equation.

\section{Notations and some preliminaries}
\label{Sec2}

In this section, we introduce the fractional substantial calculuses, analyze their properties, and also introduce the used function spaces.

\subsection{Properties of the time fractional substantial calculus}
\label{Sec2_1}

 First, we introduce the time fractional substantial calculus \cite{Chen-2015-37,Deng-2015-62,Wang-2020-38}.
\begin{definition}
	For any $\alpha>0$, the time fractional substantial integral of the function $ u(t) $ defined on $[0,\infty)$ is given by
	\begin{equation*}\label{sub_int}
		{}_0I_{t}^{\alpha,\kappa(\x)}u(t)=e^{-\kappa(\x)t}{}_0I_t^{\alpha}[e^{\kappa(\x)t}u(t)],
	\end{equation*}
	where $\kappa(\x)$ is a prescribed function in \eqref{founctional}.
	Here $ {}_0I_t^{\alpha}u(t) $ is the Riemann-Liouville fractional integral of order $ \alpha $, which is defined by
	\begin{equation*}
		{}_0I_t^{\alpha}u(t)=\frac{1}{\Gamma(\alpha)}\int_{0}^{t}(t-\tau)^{\alpha-1}u(\tau)d\tau.
	\end{equation*}	
	
\end{definition}

\begin{definition}
	For any $\alpha\in(0,1)$, the time fractional substantial derivative of the function $ u(t) $ defined on $[0,\infty)$ is given by
	\begin{equation*}\label{sub_der}
		\begin{split}
			{}_0^R D_t^{\alpha,\kappa(\x)}u(t)&=e^{-\kappa(\x)t}{}_0^R D_t^{\alpha}[e^{\kappa(\x)t}u(t)],\\
		\end{split}
	\end{equation*}
	where $\kappa(\x)$ is a prescribed function in \eqref{founctional}.
	Here  $ {}_0^RD_t^{\alpha}u(t) $ is the Riemann-Liouville fractional derivative of order $ \alpha\in(0,1)$, which is defined by
	\begin{equation*}
		{}_0^RD_t^{\alpha}u(t)=\frac{1}{\Gamma(1-\alpha)}\dfrac{d}{dt}\int_{0}^{t}(t-\tau)^{-\alpha}u(\tau)d\tau.
	\end{equation*}	
\end{definition}

\begin{lemma}\label{F_T}	
	For $u(t)\in L^2(\mathbb{R})$ and $\alpha>0$, $\kappa(\x)$ defined on $\Omega$, it holds that
	\begin{equation*}\label{F_T_I}
		\begin{aligned}
			&\mathscr{F}[e^{\kappa(\x)t}u(t)](\omega)=\tilde{u}(\kappa(\x)+i\omega),\\
			&\mathscr{F}[{}_{-\infty} I_{t}^{\alpha,\kappa(\x)}u(t)](\omega)=(\kappa(\x)+i\omega)^{-\alpha}\tilde{u}(\omega).
		\end{aligned}
	\end{equation*}
	
	If $u(t)\in H^{\alpha}(\mathbb{R}) $ further, then	
	\begin{equation*}\label{F_T_D}
		\mathscr{F}[{}_{-\infty}^{\quad R} D_t^{\alpha,\kappa(\x)}u(t)](\omega)=(\kappa(\x)+i\omega)^{\alpha}\tilde{u}(\omega),
	\end{equation*}	
	where $i=\sqrt{-1}$ and $H^{\alpha}(\mathbb{R})$ is the fractional Sobolev space with the norm
	\begin{equation*}
		\|u(t)\|_{H^{\alpha}(\mathbb{R})}^2:=\int_{-\infty}^{\infty}(1+|\omega|^{2\alpha})|\tilde{u}|^2d\omega.
	\end{equation*}
	Here $\mathscr{F}$ denotes Fourier transform operator, i.e.,
	\begin{equation*}
		\tilde{u}(\omega):=	\mathscr{F}[u(t)](\omega)=\dfrac{1}{\sqrt{2\pi}}\int_{-\infty}^{\infty}e^{-i \omega t}u(t)dt.
	\end{equation*}

\end{lemma}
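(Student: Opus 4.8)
All three formulas are Fourier-symbol identities, so the plan is: (1) establish the modulation rule, which is the first displayed equation; (2) write the left-sided substantial integral and derivative as a convolution and a differential--convolution operator, and read off their Fourier symbols from the Laplace image of a power; (3) assemble. I would follow the order of the statement, since the integral and derivative identities both rest on the modulation rule.

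\textbf{Step 1 (modulation).} Inserting $e^{\kappa(\x)t}u(t)$ into the definition of $\mathscr F$ and folding the exponential into the kernel,
\begin{equation*}
	\mathscr{F}[e^{\kappa(\x)t}u(t)](\omega)=\frac{1}{\sqrt{2\pi}}\int_{-\infty}^{\infty}e^{-i\omega t+\kappa(\x)t}u(t)\,dt,
\end{equation*}
which is $\tilde u$ evaluated at the complex point $\kappa(\x)+i\omega$ once $\tilde u(\cdot)$ is read as the holomorphic continuation of the Fourier integral into the horizontal strip $\{|\operatorname{Im}z|\le C_{\kappa}\}$ (finite by \eqref{assumptions1}); equivalently, modulation by $e^{\kappa(\x)t}$ is the imaginary translation $\omega\mapsto\omega-i\kappa(\x)$ of the spectral variable. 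Holomorphy in the strip and differentiation under the integral would follow from the decay of $u$ and $|\kappa(\x)|\le C_{\kappa}$ via Morera's theorem and dominated convergence.

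\textbf{Step 2 (the two symbols).} For the integral I would note that the left-sided substantial integral is the convolution ${}_{-\infty}I_{t}^{\alpha,\kappa(\x)}u=u\ast\Phi_{\alpha}^{\kappa(\x)}$ with $\Phi_{\alpha}^{\kappa(\x)}(s)=s^{\alpha-1}e^{-\kappa(\x)s}/\Gamma(\alpha)$ for $s>0$ and $0$ for $s\le0$; the Gamma integral gives $\mathscr F[\Phi_{\alpha}^{\kappa(\x)}](\omega)=(2\pi)^{-1/2}(\kappa(\x)+i\omega)^{-\alpha}$ on the principal branch, and the convolution theorem then yields $\mathscr F[{}_{-\infty}I_{t}^{\alpha,\kappa(\x)}u](\omega)=(\kappa(\x)+i\omega)^{-\alpha}\tilde u(\omega)$ (equivalently: combine Step 1 with the classical symbol $(i\omega)^{-\alpha}$ of the plain Riemann--Liouville integral and undo the shift). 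For the derivative I would use $(\partial_{t}+\kappa(\x))f=e^{-\kappa(\x)t}\partial_{t}(e^{\kappa(\x)t}f)$, which recasts the definition as the factorization ${}_{-\infty}^{\quad R} D_{t}^{\alpha,\kappa(\x)}=(\partial_{t}+\kappa(\x))\,{}_{-\infty}I_{t}^{1-\alpha,\kappa(\x)}$, so that its symbol is $(\kappa(\x)+i\omega)\cdot(\kappa(\x)+i\omega)^{\alpha-1}=(\kappa(\x)+i\omega)^{\alpha}$. The extra hypothesis $u\in H^{\alpha}(\mathbb R)$ is used precisely here: it makes $(\kappa(\x)+i\omega)^{\alpha}\tilde u\in L^{2}(\mathbb R)$ (because $|\kappa(\x)+i\omega|^{2\alpha}\le C(1+|\omega|^{2\alpha})$ by \eqref{assumptions1}) and legitimizes the differentiation.

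\textbf{Main obstacle.} The genuinely delicate point is Step 1 together with its reuse in Step 2, and it is most acute in the situation emphasized in this paper, namely $\kappa(\x)<0$: then both $e^{\kappa(\x)t}u$ and the kernel $\Phi_{\alpha}^{\kappa(\x)}$ grow exponentially at one end of the line, so for a generic $u\in L^{2}(\mathbb R)$ the defining integrals need not converge and $\kappa(\x)+i\omega$ may sit on or near the standard branch cut of $z\mapsto z^{\pm\alpha}$. Hence $\tilde u(\kappa(\x)+i\omega)$ and the Fubini/convolution manipulations cannot be taken at face value: they must be obtained by analytic continuation and contour deformation (Cauchy's theorem) on a dense subclass of functions with enough decay, and the branch of $(\kappa(\x)+i\omega)^{\pm\alpha}$ must be fixed consistently with that continuation. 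Once this functional-analytic groundwork — the analyticity strip, the branch choice, and the density/limiting passage — is laid, the three symbol computations above are routine.
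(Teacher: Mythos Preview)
The paper states this lemma without proof, treating it as a preliminary fact (it is used immediately afterwards in the proofs of Lemmas~\ref{L2-1} and~\ref{L2-2}). Your proposed route---modulation rule, then the convolution representation of ${}_{-\infty}I_t^{\alpha,\kappa(\x)}$ with kernel $\Phi_\alpha^{\kappa(\x)}(s)=s^{\alpha-1}e^{-\kappa(\x)s}/\Gamma(\alpha)\,\mathbf 1_{s>0}$ and the Gamma integral for its symbol, then the factorization ${}_{-\infty}^{\quad R}D_t^{\alpha,\kappa(\x)}=(\partial_t+\kappa(\x))\,{}_{-\infty}I_t^{1-\alpha,\kappa(\x)}$---is the standard argument and is correct; the use of $u\in H^{\alpha}(\mathbb R)$ to place $(\kappa(\x)+i\omega)^{\alpha}\tilde u$ in $L^2$ is exactly the right justification. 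Your caveat about $\kappa(\x)<0$ (exponential growth of $e^{\kappa(\x)t}u$ and of the kernel, branch choice for $(\kappa(\x)+i\omega)^{\pm\alpha}$, and the need for analytic continuation on a dense subclass) is well placed and is precisely the mechanism the paper exploits later via Cauchy's theorem in the strip; there is nothing further to compare.
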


To keep the technical details in the forthcoming numerical analyses at a moderate level, similar to \cite{McLean-2012-34,Mustapha-2016-73,Mustapha-2014-34}, the properties of the time fractional substantial calculus can be first proved. To be convenient, $u(t)$ and $v(t)$  are used instead of $ u(t,\x) $ and $ v(t,\x) $ in this section, respectively. Next, we introduce two Lemmas.
\begin{lemma}\label{L2-1}
	Let $u(t)$ be a piecewise $C^{1}$ function on $[0,T]$ and $ \kappa(\x) $ be a given function on $\Omega$. Then
	\begin{equation*}\label{semi-discrete Stability3-1}
		\int_{0}^{T}u(t)\cdot {}_0^R D_t^{\alpha,\kappa(\x)}u(t)dt\geq C_{\alpha}T^{-\alpha}\int_{0}^{T}u^{2}(t)dt, \quad 0<\alpha<1,
	\end{equation*}
	where $C_{\alpha}=\dfrac{1}{\alpha+1}\Big(\dfrac{\alpha\pi}{\alpha+1}\Big)^{\alpha}\cdot\min{\{C_{max}^{-(4+4\alpha)},1\}} $. Here $ C_{max} $ is defined in \eqref{assumptions2}.
\end{lemma}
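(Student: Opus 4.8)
The plan is to pass to Fourier space, recognize the left‑hand side as a quadratic form with Fourier multiplier $(\kappa(\x)+i\omega)^\alpha$, and then extract the lower bound from a sharp fractional Poincar\'e‑type inequality that uses only the compact support of $u$. First I would extend $u$ by zero to all of $\mathbb R$ (keeping the name $u$); since $u$ is piecewise $C^1$ the extension is in $L^2(\mathbb R)$, has bounded variation, and for $t\in[0,T]$ the lower limit $0$ in the substantial derivative may be replaced by $-\infty$, so there ${}_0^R D_t^{\alpha,\kappa(\x)}u={}_{-\infty}^{\quad R} D_t^{\alpha,\kappa(\x)}u$. Then Parseval's identity, the symbol formula of Lemma~\ref{F_T}, i.e. $\mathscr F[{}_{-\infty}^{\quad R} D_t^{\alpha,\kappa(\x)}u](\omega)=(\kappa(\x)+i\omega)^\alpha\tilde u(\omega)$, and the reality of $u$ (whence $\overline{\tilde u(\omega)}=\tilde u(-\omega)$) give
\[
 \int_0^T u(t)\,{}_0^R D_t^{\alpha,\kappa(\x)}u(t)\,dt
 =\int_{\mathbb R}\operatorname{Re}\big[(\kappa(\x)+i\omega)^\alpha\big]\,|\tilde u(\omega)|^2\,d\omega
 =\int_{\mathbb R}(\kappa(\x)+i\omega)^\alpha\,\tilde u(-\omega)\tilde u(\omega)\,d\omega .
\]

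Next I would control $\operatorname{Re}[(\kappa(\x)+i\omega)^\alpha]$. When $\kappa(\x)\ge 0$ this is easy: $\kappa(\x)+i\omega$ lies in the closed right half‑plane, so $|\arg(\kappa(\x)+i\omega)|\le\pi/2$ and $\operatorname{Re}[(\kappa(\x)+i\omega)^\alpha]\ge\cos(\tfrac{\alpha\pi}{2})\,|\kappa(\x)+i\omega|^\alpha\ge\cos(\tfrac{\alpha\pi}{2})|\omega|^\alpha\ge 0$. The genuine difficulty is $\kappa(\x)<0$: the principal branch of $(\kappa(\x)+i\omega)^\alpha$ then has its cut along $\{is:s\ge\kappa(\x)\}$, which crosses the real axis at $\omega=0$, and its real part is negative for small $|\omega|$, so the form is not manifestly nonnegative on $\mathbb R$. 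Here I would invoke the Cauchy integral theorem (this is the "Cauchy integral theorem" flagged in the introduction): since $\tilde u$ is entire of exponential type and, $u$ being piecewise $C^1$, decays like $|\operatorname{Re}\omega|^{-1}$ on horizontal strips, the last integral (written as $\int_{-\infty}^0+\int_0^\infty$, each on its own branch) can be deformed off the real axis, around the branch point $\omega=i\kappa(\x)$, onto a contour lying in the half‑plane $\operatorname{Re}(\kappa(\x)+i\omega)>0$ where the multiplier again has positive real part; the short portions of the contour that pass the branch point contribute an amount bounded by $\|u\|_{L^2}^2$ times a constant depending on $|\kappa(\x)|$ and $C_{max}$, via $e^{|\kappa(\x)|T}\le C_{max}$ and Plancherel. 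Carrying out this deformation carefully and turning the resulting estimates into the explicit factor $\min\{C_{max}^{-(4+4\alpha)},1\}$ is the step I expect to be the main obstacle.

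Both cases are then closed by a fractional Poincar\'e inequality: for $\tilde u$ the Fourier transform of an $L^2$ function supported in $[0,T]$,
\[
 \int_{\mathbb R}|\omega|^\alpha|\tilde u(\omega)|^2\,d\omega\;\ge\;\frac{1}{\alpha+1}\Big(\frac{\alpha\pi}{\alpha+1}\Big)^\alpha T^{-\alpha}\int_{\mathbb R}|\tilde u(\omega)|^2\,d\omega .
\]
To prove it, split $\mathbb R=\{|\omega|<R\}\cup\{|\omega|\ge R\}$: on the high modes $\int_{|\omega|\ge R}|\omega|^\alpha|\tilde u|^2\ge R^\alpha\int_{|\omega|\ge R}|\tilde u|^2$, while on the low modes $\int_{|\omega|<R}|\tilde u|^2\le 2R\|\tilde u\|_{L^\infty}^2\le\frac{RT}{\pi}\|u\|_{L^2}^2$, using $\|\tilde u\|_{L^\infty}\le\frac1{\sqrt{2\pi}}\|u\|_{L^1}\le\sqrt{T/(2\pi)}\,\|u\|_{L^2}$ and Plancherel; adding the two and choosing the optimal $R=\tfrac{\alpha\pi}{(\alpha+1)T}$ produces exactly the stated prefactor. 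Since $\int_{\mathbb R}|\tilde u|^2=\int_0^T u^2\,dt$, combining with the two cases above yields the claim with the asserted $C_\alpha$. One technical caveat to handle along the way: the zero‑extension of a piecewise‑$C^1$ function is in general only in $H^s(\mathbb R)$ for $s<1/2$ (jumps at $t=0$ when $u(0)\ne 0$, and at $t=T$), so for $\alpha\ge 1/2$ the Parseval/symbol step should be justified by a truncation/approximation argument or by first splitting off the initial value $u(0)$.
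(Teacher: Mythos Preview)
Your strategy---Parseval, case split on the sign of $\kappa(\x)$, and the low/high-frequency Poincar\'e inequality---is exactly the paper's, and your computation of the optimal $R=\tfrac{\alpha\pi}{(\alpha+1)T}$ reproduces the paper's constant verbatim. The gap is in your treatment of $\kappa(\x)<0$. You describe a deformation ``around the branch point'' with ``short portions'' contributing controlled error terms; this keyhole picture is neither what the paper does nor easy to push through to the stated constant. The paper's move is cleaner and more specific: working on $[0,1]$ after scaling, it writes
\[
\int_0^1 v\,{}_0^R D_t^{\alpha,\kappa(\x)T}v\,dt=\frac{1}{2\pi}\int_{\Gamma_1} z^{\alpha}\,|\hat v(z-\kappa(\x)T)|^2\,dz,
\qquad \Gamma_1=\{\kappa(\x)T+iy:y\in\mathbb R\},
\]
and shifts the entire contour to the parallel line $\Gamma_2=\{-\kappa(\x)T+iy\}$ in the right half-plane. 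No arc near the branch point is estimated; the ``side'' contributions $\Gamma_3,\Gamma_4$ at $|y|\to\infty$ vanish because the integrand is real there and decays. On $\Gamma_2$ one has $\operatorname{Re}z^\alpha\ge\cos(\tfrac{\pi\alpha}{2})\,y^\alpha$ exactly as in the nonnegative case, but the integrand is now $|\hat v(iy-2\kappa(\x)T)|^2$.

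This shift is what produces the $C_{max}$ factor, and not any branch-point contribution: to run the Poincar\'e argument with the shifted transform, one applies Plancherel to $e^{2\kappa(\x)Tt}v(t)$, obtaining
\[
\frac{1}{C_{max}^{4}}\int_0^1 v^2\,dt\;\le\;\int_0^1 e^{4\kappa(\x)Tt}v^2\,dt
\;=\;\frac{1}{\pi}\int_0^\infty|\hat v(iy-2\kappa(\x)T)|^2\,dy,
\]
and the low-frequency bound $\int_0^\varepsilon|\hat v(iy-2\kappa(\x)T)|^2\,dy\le\varepsilon\int_0^1 v^2\,dt$ still holds by Cauchy--Schwarz. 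Optimizing $\varepsilon$ then yields the constant $\tfrac{1}{\alpha+1}\big(\tfrac{\alpha\pi}{(\alpha+1)C_{max}^4}\big)^\alpha C_{max}^{-4}\cos(\tfrac{\pi\alpha}{2})$, which is where $C_{max}^{-(4+4\alpha)}$ comes from. Your proposal does not identify this mechanism; replacing the vague ``around the branch point'' step with the parallel shift $\Gamma_1\to\Gamma_2$ and the weighted Plancherel above fills the gap and matches the paper.
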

\begin{proof}
	First, we assume that there exists a set $ \Omega_{1}\subset \Omega $  such that
	\begin{equation*}\label{semi-discrete Stability3-1-2}
		\left\{
		\begin{aligned}
			& \kappa(\x)\geq 0 &&\textrm{for}~~ \x \in \Omega_{1}, \\
			& \kappa(\x)< 0  &&\textrm{for}~~ \x \in \Omega\backslash\Omega_{1}.\\
		\end{aligned} \right.
	\end{equation*}
	
	Let $ v(t) $ be a piecewise $ C^{1} $ function on $ [0,1] $. Thus,   ${}_0^RD_t^{\alpha}(e^{\kappa(\x)Tt}v(t)) $ is continuous except
	for weak singularities at the breakpoints of $ e^{\kappa(\x)Tt}v(t) $. We extend $ v(t) $ by zero outside the interval $ [0,1] $. Then, one can get 
	\begin{equation*}
		\tilde{v}(y)=\dfrac{1}{\sqrt{2\pi}}\int_{-\infty}^{\infty}e^{-iyt} v(t)dt=\dfrac{1}{\sqrt{2\pi}}\int_{0}^{\infty}e^{-iyt} v(t)dt=\dfrac{1}{\sqrt{2\pi}}\hat{v}(iy),	
	\end{equation*}	
	where $\hat{v}$ is the Laplace transform of $ v $ and $i=\sqrt{-1}$.
	
	By Plancherel's theorem, the fact that $ \overline{\hat{v}(iy) }=\hat{v}(-iy) $ ($ v $ is a real-valued function), we find that	
	\begin{equation}\label{semi-discrete Stability3-1-1}
		\begin{split}
			\int_{0}^{1}v(t)\cdot {}_0^R D_t^{\alpha,\kappa(\x)T}v(t)dt&=\int_{-\infty}^{\infty} v(t) \cdot  {}_0^R D_t^{\alpha,\kappa(\x)T}v(t)dt\\
			&=\dfrac{1}{2\pi}\int_{-\infty}^{\infty}(iy+\kappa(\x)T)^{\alpha}|\hat{v}(iy)|^{2}dy,\\
		\end{split}		
	\end{equation}
	where Lemma\,\ref{F_T} is used. Here $ \overline{\hat{v}(iy) } $ represents the complex conjugate of $ \hat{v}(iy) $.	
	
	By convention, denote the left hand side of \eqref{semi-discrete Stability3-1-1} by $\uppercase\expandafter{\romannumeral1}_1 $. In what follows, we only need to consider two cases for $\kappa(\x)$:
	
	{\bf Case I}: $ \kappa(\x)\geq 0 \
	\textrm{for}~ \x \in \Omega_{1} $. By \eqref{semi-discrete Stability3-1-1}, one can obtain
	\begin{equation}\label{semi-discrete Stability3-1-3}
		\begin{split}
			\uppercase\expandafter{\romannumeral1}_1&=\dfrac{1}{\pi}\int_{0}^{\infty}Re[(iy+\kappa(\x)T )^{\alpha}]|\hat{v}(iy)|^{2}dy\\
			&=\dfrac{1}{\pi}\int_{0}^{\infty}|iy+\kappa(\x)T |^{\alpha}\cos\theta|\hat{v}(iy)|^{2}dy\\
			&\geq\dfrac{1}{\pi}\cos\Big(\dfrac{\pi\alpha}{2}\Big)\int_{0}^{\infty}y^{\alpha}\cdot|\hat{v}(iy)|^{2}dy,\\
		\end{split}		
	\end{equation}
	where $ \theta\leq\dfrac{\pi\alpha}{2} $ is the argument of a complex number $ (iy+\kappa(\x)T )^{\alpha} $ and $ Re[z] $ is the real part of a complex number $ z $.
	
	For any $ \varepsilon>0 $, using the Cauchy-Schwarz inequality, one can get
	\begin{equation}\label{semi-discrete Stability3-2}	
		\int_{0}^{\varepsilon}|\hat{v}(iy)|^{2}dy=\int_{0}^{\varepsilon}\Big|\int_{0}^{1}e^{-iyt}v(t)dt\Big|^{2}dy\leq\varepsilon\int_{0}^{1}v^{2}(t)dt.
	\end{equation}
	By Plancherel's theorem for $ v(t) $, there is
	\begin{equation*}\label{semi-discrete Stability3-3}
		\begin{split}
			\int_{0}^{1}v^{2}(t)dt
			&\leq\dfrac{\varepsilon }{\pi}\int_{0}^{1}v^{2}(t)dt+\dfrac{1}{\pi}\int_{\varepsilon}^{\infty}|\hat{v}(iy)|^{2}dy,\\
		\end{split}
	\end{equation*}
	where the property \eqref{semi-discrete Stability3-2} is used in the last step.
	
	For $ 0< \varepsilon < \pi$, it follows that
	\begin{equation}\label{semi-discrete Stability3-5}
		\begin{split}	
			\Big(1-\dfrac{\varepsilon}{\pi}\Big)\int_{0}^{1}v^{2}(t)dt&=\dfrac{1}{\pi}\int_{\varepsilon}^{\infty}|\hat{v}(iy)|^{2}dy\\
			&\leq\dfrac{1}{\pi}\int_{\varepsilon}^{\infty}\Big(\dfrac{y}{\varepsilon}\Big)^{\alpha}|\hat{v}(iy)|^{2}dy\\
			&\leq\dfrac{1}{\pi\varepsilon^{\alpha}}\int_{0}^{\infty}y^{\alpha}|\hat{v}(iy)|^{2}dy.\\
		\end{split}
	\end{equation}
	
Combining \eqref{semi-discrete Stability3-1-3} and \eqref{semi-discrete Stability3-5} leads to
	\begin{equation*}\label{semi-discrete Stability3-6}
		\begin{split}
			\int_{0}^{1}v(t)\cdot {}_0^R D_t^{\alpha,\kappa(\x)T}v(t)dt&\geq \Big(\varepsilon^{\alpha}-\dfrac{\varepsilon^{\alpha+1}}{\pi}\Big)\cos\Big(\dfrac{\pi\alpha}{2}\Big)\int_{0}^{1}v^{2}(t)dt.\\
		\end{split}
	\end{equation*}
%
	
	Hence
	\begin{equation*}\label{semi-discrete Stability3-7}
		\begin{split}
			\int_{0}^{1}v(\tau)\cdot {}_0^R D_\tau^{\alpha,\kappa(\x)T}v(\tau)d\tau&\geq C_{\alpha1}\int_{0}^{1}v^{2}(\tau)d\tau,\\
		\end{split}
	\end{equation*}
	where $ C_{\alpha1}=\dfrac{1}{\alpha+1}\Big(\dfrac{\alpha\pi}{\alpha+1}\Big)^{\alpha}\cos\Big(\dfrac{\pi\alpha}{2}\Big) $.
	
	{\bf Case II}:  $ \kappa(\x)< 0 \
	\textrm{for}~ \x \in \Omega\backslash\Omega_{1} $. If $ y>\kappa(\x)T\cdot\tan\Big(\dfrac{\pi}{2\alpha}\Big) $, there exists
	\begin{equation*}\label{semi-discrete Stability3-1-4}
		Re[(iy+\kappa(\x)T )^{\alpha}]>0.
	\end{equation*}
	
	Hence, by \eqref{semi-discrete Stability3-1-3}, it holds that
	\begin{equation}\label{semi-discrete Stability3-1-5}
		\lim_{y\rightarrow+\infty}Re[(iy+\kappa(\x)T )^{\alpha}]|\hat{v}(iy)|^{2}=0.
	\end{equation}
	
	In complex plane, we define
	\begin{equation}\label{semi-discrete Stability3-1-6}
		\begin{aligned}
			&\Gamma_{1}:=\{iy+\kappa(\x)T \big| -\infty<y<+\infty\},\\
			&\Gamma_{2}:=\{iy-\kappa(\x)T  \big| -\infty<y<+\infty\},\\
			&\Gamma_{3}:=\{iy_{0}+x  \big|  \ \kappa(\x)T\leq x\leq-\kappa(\x)T ,\quad y_{0}>0\}, \\
			&\Gamma_{4}:=\{-iy_{0}+x  \big| \ \kappa(\x)T\leq x\leq-\kappa(\x)T ,\quad y_{0}>0\}, \\
			&\Sigma:=\{z  \big|\ |Re(z)|\leq-\kappa(\x)T\},\\
		\end{aligned}
	\end{equation}
	where $ \Gamma_{i} (i=1,2,3,4) $ are with the directions being the same as the ones of the corresponding coordinate axes.
	
	By using \eqref{semi-discrete Stability3-1-1}, it has
	\begin{equation*}\label{semi-discrete Stability3-1-7}
		\uppercase\expandafter{\romannumeral1}_1=\dfrac{1}{2\pi}\int_{\Gamma_{1}}z^{\alpha}\cdot|\hat{v}(z-\kappa(\x)T)|^{2}dz.
	\end{equation*}
	Since $ v(t) $ has a compact  support in $ \mathbb{R} $, i.e., $ v(t) $ is zero outside of $[0,1]$, we know  that $ z^{\alpha}\cdot|\hat{v}(z-\kappa(\x)T)|^{2} $ is analytic in the strip
	shape area $ \Sigma $. By Cauchy integral theorem, there exists
	\begin{equation*}\label{semi-discrete Stability3-1-9}
		\int_{\Gamma_{1}-\Gamma_{2}}z^{\alpha}\cdot|\hat{v}(z-\kappa(\x)T)|^{2}dz+\lim_{y_{0}\rightarrow+\infty}\int_{\Gamma_{3}-\Gamma_{4}}z^{\alpha}\cdot|\hat{v}(z-\kappa(\x)T)|^{2}dz=0,
	\end{equation*}
	where $ -\Gamma_{2} $ means the direction is opposite to $ \Gamma_{2} $. By \eqref{semi-discrete Stability3-1-5} and \eqref{semi-discrete Stability3-1-6}, we have
	\begin{equation*}\label{semi-discrete Stability3-1-8}
		\begin{split}
			\lim_{y_{0}\rightarrow+\infty}\int_{\Gamma_{3}-\Gamma_{4}}z^{\alpha}\cdot|\hat{v}(z-\kappa(\x)T)|^{2}dz
			&=2\int_{\Gamma_{3}}\lim_{y_{0}\rightarrow+\infty}Re[z^{\alpha}]\cdot|\hat{v}(z-\kappa(\x)T)|^{2}dz\\
			&=0.\\	
		\end{split}	
	\end{equation*}
	Thus,
	\begin{equation}\label{semi-discrete Stability3-1-10}
		\begin{split}
			\uppercase\expandafter{\romannumeral1}_1&=\dfrac{1}{2\pi}\int_{\Gamma_{2}}z^{\alpha}\cdot|\hat{v}(z-\kappa(\x)T)|^{2}dz\\
			&=\dfrac{1}{2\pi}\int_{-\infty}^{\infty}(iy-\kappa(\x)T)^{\alpha}\cdot|\hat{v}(iy-2\kappa(\x)T)|^{2}dy.\\
		\end{split}
	\end{equation}
	Similar to \eqref{semi-discrete Stability3-1-3}, there exists
	\begin{equation}\label{semi-discrete Stability3-1-11}
		\begin{split}
			\uppercase\expandafter{\romannumeral1}_1&\geq\dfrac{1}{\pi}\cos\Big(\dfrac{\pi\alpha}{2}\Big)\int_{0}^{\infty}y^{\alpha}\cdot|\hat{v}(iy-2\kappa(\x)T)|^{2}dy.\\
		\end{split}
	\end{equation}
	For any $ \varepsilon>0 $, using the Cauchy-Schwarz inequality leads to
	\begin{equation}\label{semi-discrete Stability3-1-12}
		\begin{split}	
			\int_{0}^{\varepsilon}|\hat{v}(iy-2\kappa(\x)T)|^{2}dy&=	\int_{0}^{\varepsilon}\Big|\int_{0}^{1}e^{-(iy-2\kappa(\x)T)t}v(t)dt\Big|^{2}dy\\
			&\leq \varepsilon\int_{0}^{1}v^{2}(t)dt,\\
		\end{split}
	\end{equation}	
	where \eqref{assumptions2} is used. Next, invoking Plancherel's theorem for $ e^{2\kappa(x)Tt}v(t) $ followed by an application of \eqref{semi-discrete Stability3-1-12},
	it holds that
	\begin{equation*}\label{semi-discrete Stability3-1-13}
		\begin{split}
			\dfrac{1}{C_{max}^{4}}\int_{0}^{1}v^{2}(t)dt&\leq\int_{0}^{1}e^{4\kappa(\x)Tt}v^{2}(t)dt\\
			&=\dfrac{1}{\pi}\int_{0}^{\infty}|\hat{v}(iy-2\kappa(\x)T)|^{2}dy\\	
			&\leq\dfrac{\varepsilon }{\pi }\int_{0}^{1}v^{2}(t)dt+\dfrac{1}{\pi}\int_{\varepsilon}^{\infty}|\hat{v}(iy-2\kappa(\x)T)|^{2}dy.\\
		\end{split}
	\end{equation*}
For $ 0< \varepsilon < \dfrac{\pi}{ C_{max}^{4}}$, it follows that
	\begin{equation}\label{semi-discrete Stability3-1-14}
		\begin{split}	
			\Big(\dfrac{1}{C_{max}^{4}}-\dfrac{\varepsilon }{\pi }\Big)\int_{0}^{1}v^{2}(t)dt&=\dfrac{1}{\pi}\int_{\varepsilon}^{\infty}|\hat{v}(iy-2\kappa(\x)T)|^{2}dy\\
			&\leq\dfrac{1}{\pi\varepsilon^{\alpha}}\int_{0}^{\infty}y^{\alpha}|\hat{v}(iy-2\kappa(\x)T)|^{2}dy.\\
		\end{split}
	\end{equation}
By \eqref{semi-discrete Stability3-1-11} and \eqref{semi-discrete Stability3-1-14}, we obtain
	\begin{equation*}\label{semi-discrete Stability3-1-15}
		\begin{split}
			\int_{0}^{1}v(t)\cdot {}_0^R D_t^{\alpha,\kappa(\x)T}v(t)dt&\geq \Big(\dfrac{\varepsilon^{\alpha}}{C_{max}^{4}}-\dfrac{\varepsilon^{\alpha+1}}{\pi}\Big)\cos\Big(\dfrac{\pi\alpha}{2}\Big)\int_{0}^{1}v^{2}(t)dt.\\
		\end{split}
	\end{equation*}
%
Then one can easily see that	
	\begin{equation*}\label{semi-discrete Stability3-1-16}
		\begin{split}
			\int_{0}^{1}v(\tau)\cdot {}_0^R D_\tau^{\alpha,\kappa(\x)T}v(\tau)d\tau&\geq C_{\alpha2}\int_{0}^{1}v^{2}(\tau)d\tau,\\
		\end{split}
	\end{equation*}
	where $ C_{\alpha2}=\dfrac{1}{(\alpha+1)C_{max}^{4}}\Big(\dfrac{\alpha\pi }{(\alpha+1)C_{max}^{4}}\Big)^{\alpha}\cos\Big(\dfrac{\pi\alpha}{2}\Big) $.
	
Combining the results of {\bf Case I} and  {\bf Case II} lead to
	\begin{equation}\label{semi-discrete Stability3-1-17}
		\int_{0}^{1}v(\tau)\cdot {}_0^R D_\tau^{\alpha,\kappa(\x)T}v(\tau)d\tau\geq C_{\alpha}\int_{0}^{1}v^{2}(\tau)d\tau.
	\end{equation}
Finally, by the scaling argument, we arrive at
	
	\begin{equation*}\label{semi-discrete Stability3-11}
		\int_{0}^{T}u(t)\cdot {}_0^R D_t^{\alpha,\kappa(\x)}u(t)dt\geq C_{\alpha}T^{-\alpha}\int_{0}^{T}u^{2}(t)dt,
	\end{equation*}
	where $ u(t)=v(t/T) $.		
	Thus the proof is completed.
\end{proof}	
\begin{lemma} \label{L2-2}
	Let $u(t)$ and $v(t)$ be piecewise continuous on $[0,T]$ and $ \phi(\x) $ be a given function on $\Omega$. Then
	\begin{equation*}\label{semi-discrete Stability3-12}
		\begin{split}
			\Big|\int_{0}^{T}e^{-\phi(\x)t}u(t)\cdot v(t)dt\Big|^{2}&\leq \sec^{2}\Big(\dfrac{\pi\alpha}{2}\Big)\int_{0}^{T}v(t)\cdot {}_0^R D_t^{\alpha,\phi(\x)}v(t)dt\\
			&~~~\cdot\int_{0}^{T}e^{2(|\phi(\x)|-2\phi(\x))t}u(t)\cdot {}_0I_t^{\alpha,2(|\phi(\x)|-\phi(\x))}u(t)dt.
		\end{split}
	\end{equation*}
	
\end{lemma}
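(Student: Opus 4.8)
The plan is to adapt the Fourier-analytic argument used in the proof of Lemma~\ref{L2-1}. First I would extend $u$ and $v$ by zero outside $[0,T]$, pass to the frequency side via Lemma~\ref{F_T} and Plancherel's theorem, and then apply the Cauchy--Schwarz inequality in the frequency variable. The exponential weights $2(|\phi(\x)|-2\phi(\x))$ and $2(|\phi(\x)|-\phi(\x))$ on the right-hand side are calibrated exactly so that, after the appropriate contour shift, all three quantities in the inequality become integrals over one and the same vertical line lying in the closed right half-plane, where the factor $\sec^2(\pi\alpha/2)$ is produced by an elementary lower bound on $\mathrm{Re}(\zeta^{\pm\alpha})$.

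I would split into two cases according to the sign of $\phi(\x)$, as in Lemma~\ref{L2-1}. Writing $e^{-\phi(\x)t}u(t)v(t)=(e^{at}u(t))(e^{bt}v(t))$ with $a+b=-\phi(\x)$, Plancherel's theorem gives $\int_0^T e^{-\phi(\x)t}u(t)v(t)\,dt=\int_{\mathbb{R}}\widetilde{(e^{a\cdot}u)}(y)\,\overline{\widetilde{(e^{b\cdot}v)}(y)}\,dy$. For $\phi(\x)\ge 0$ I take $a=-\phi(\x)$, $b=0$; since then $2(|\phi(\x)|-\phi(\x))=0$, Lemma~\ref{F_T} gives directly
\[
\int_0^T v\cdot{}_0^R D_t^{\alpha,\phi(\x)}v\,dt=\int_{\mathbb{R}}\mathrm{Re}\big[(\phi(\x)+iy)^{\alpha}\big]\,|\tilde v(y)|^2\,dy
\]
and $\int_0^T e^{-2\phi(\x)t}u\cdot{}_0I_t^{\alpha}u\,dt=\int_{\mathbb{R}}\mathrm{Re}[(\phi(\x)+iy)^{-\alpha}]\,|\widetilde{(e^{-\phi(\x)\cdot}u)}(y)|^2\,dy$. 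For $\phi(\x)<0$ I take instead $a=-3\phi(\x)$, $b=2\phi(\x)$ (so $a+b=-\phi(\x)=|\phi(\x)|$); cancelling exponentials reduces the last right-hand factor of the lemma to $\int_0^T w\cdot{}_0I_t^{\alpha,-\phi(\x)}w\,dt$ with $w=e^{-3\phi(\x)\cdot}u$, which by Lemma~\ref{F_T} equals $\int_{\mathbb{R}}\mathrm{Re}[(-\phi(\x)+iy)^{-\alpha}]\,|\tilde w(y)|^2\,dy$, while the contour-deformation computation carried out inside the proof of Lemma~\ref{L2-1} (now with parameter $\phi(\x)$) rewrites $\int_0^T v\cdot{}_0^R D_t^{\alpha,\phi(\x)}v\,dt$ as $\int_{\mathbb{R}}\mathrm{Re}[(-\phi(\x)+iy)^{\alpha}]\,|\widetilde{(e^{2\phi(\x)\cdot}v)}(y)|^2\,dy$. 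In either case the two right-hand factors and the left-hand bilinear form are then integrals, over the same line, against the weights $(\phi(\x)^2+y^2)^{\pm\alpha/2}$ and against the common pair of frequency functions $\widetilde{(e^{a\cdot}u)}$ and $\widetilde{(e^{b\cdot}v)}$.

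The finish is routine. Since $\mathrm{Re}(\zeta^{\pm\alpha})\ge\cos(\pi\alpha/2)\,|\zeta|^{\pm\alpha}$ whenever $\mathrm{Re}\,\zeta\ge 0$ and $0<\alpha<1$ (because then $|\arg\zeta^{\pm\alpha}|=\alpha|\arg\zeta|\le\pi\alpha/2<\pi/2$), each right-hand factor is at least $\cos(\pi\alpha/2)$ times the corresponding weighted $L^2$-integral. Applying Cauchy--Schwarz to $\int_{\mathbb{R}}\widetilde{(e^{a\cdot}u)}\,\overline{\widetilde{(e^{b\cdot}v)}}\,dy$ with the integrand split as $(\phi(\x)^2+y^2)^{-\alpha/4}\cdot(\phi(\x)^2+y^2)^{\alpha/4}$ then yields
\[
\Big|\int_0^T e^{-\phi(\x)t}uv\,dt\Big|\le\sec\Big(\tfrac{\pi\alpha}{2}\Big)\Big(\int_0^T v\cdot{}_0^R D_t^{\alpha,\phi(\x)}v\,dt\Big)^{1/2}\Big(\int_0^T e^{2(|\phi(\x)|-2\phi(\x))t}u\cdot{}_0I_t^{\alpha,2(|\phi(\x)|-\phi(\x))}u\,dt\Big)^{1/2},
\]
and squaring gives the assertion.

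The main obstacle is the case $\phi(\x)<0$. There $(\phi(\x)+iy)^{\pm\alpha}$ lies on the branch cut of the power function at $y=0$, so Plancherel cannot be applied directly to $\int_0^T v\cdot{}_0^R D_t^{\alpha,\phi(\x)}v\,dt$; one must justify shifting the contour from the line $\mathrm{Re}=\phi(\x)$ to the line $\mathrm{Re}=-\phi(\x)$, which requires the decay $\mathrm{Re}[(iy+\phi(\x))^{\alpha}]\,|\hat v(iy)|^2\to 0$ as $y\to\infty$ together with the Cauchy integral theorem on the strip $\{\,|\mathrm{Re}(z)|\le-\phi(\x)\,\}$ --- exactly the steps already carried out in Lemma~\ref{L2-1}. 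A minor secondary point is to record that all the zero-extensions and their weighted versions belong to $L^2(\mathbb{R})$ so that Lemma~\ref{F_T} and Plancherel apply; for $\phi(\x)<0$ the decaying factor built into ${}_0I_t^{\alpha,-\phi(\x)}w$ makes this immediate, and for $\phi(\x)\ge 0$ one may pass to the limit from $\phi(\x)>0$ if necessary.
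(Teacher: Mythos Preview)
Your proposal is correct and follows essentially the same route as the paper's proof: the same case split on the sign of $\phi(\x)$, the identical exponential decompositions $(a,b)=(-\phi,0)$ for $\phi\ge 0$ and $(a,b)=(-3\phi,2\phi)$ for $\phi<0$, the Cauchy--Schwarz step in frequency with weight $|z|^{\pm\alpha/2}$, the angle bound $\mathrm{Re}(z^{\pm\alpha})\ge\cos(\pi\alpha/2)\,|z|^{\pm\alpha}$, and the contour shift from $\mathrm{Re}=\phi$ to $\mathrm{Re}=-\phi$ borrowed from Lemma~\ref{L2-1} in the negative case. The paper presents the Cauchy--Schwarz step via a truncation $\Phi(\varepsilon)$ (resp.\ $\Psi(\varepsilon)$) and then lets $\varepsilon\to\infty$, but this is only a cosmetic difference.
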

\begin{proof}
	First, we assume that there exists a set $ \Omega_{1}\subset \Omega $  such that
\begin{equation*}\label{semi-discrete Stability3-1-2}
	\left\{
	\begin{aligned}
		& \phi(\x)\geq 0 &&\textrm{for}~~ \x \in \Omega_{1}, \\
		& \phi(\x)< 0  &&\textrm{for}~~ \x \in \Omega\backslash\Omega_{1}.\\
	\end{aligned} \right.
\end{equation*}
	
	In what follows, we analyze two cases.
	
	{\bf Case I}: $ \phi(\x)\geq 0 \
	\textrm{for}~ \x \in \Omega_{1} $. The similar analysis can be found in \eqref{semi-discrete Stability3-1-1}. By	Plancherel's theorem, there exists
	\begin{equation}\label{semi-discrete Stability3-13}
		\begin{split}
			\int_{0}^{T}e^{-\phi(\x)t}u(t)\cdot v(t)dt&=\dfrac{1}{2\pi}\int_{-\infty}^{\infty}\overline{\hat{u}(z_1)}\cdot\hat{v}(iy)dy \\
			&\leq\dfrac{1}{2\pi}\int_{-\infty}^{\infty}|\overline{\hat{u}(z_1)}\cdot\hat{v}(iy)| dy, \\
		\end{split}
	\end{equation}
	where $z_1=iy+\phi(\x)$.
	
	For any $ \varepsilon>0 $, using the Cauchy-Schwarz inequality in the bounded interval implies that
	\begin{equation*}\label{semi-discrete Stability3-14}
		\begin{split}
			\varPhi^{2}(\varepsilon)
			&=\Big(\int_{-\varepsilon}^{\varepsilon}\Big|\overline{\hat{u}(z_1)}z_1^{-\alpha/2}\cdot z_1^{\alpha/2}\hat{v}(iy)\Big|dy\Big)^{2}  \\
			&\leq\int_{-\varepsilon}^{\varepsilon}\Big|z_1^{\alpha/2}\cdot\hat{v}(iy)\Big|^{2}dy\cdot\int_{-\varepsilon}^{\varepsilon}\Big|z_1^{-\alpha/2}\cdot\hat{u}(z_1)\Big|^{2}dy,\\  	
		\end{split}
	\end{equation*}
	where $\varPhi(\varepsilon)$ is defined by
	\begin{equation*}\label{semi-discrete Stability3-15}
		\varPhi(\varepsilon)=\int_{-\varepsilon}^{\varepsilon}|\overline{\hat{u}(z_1)}\cdot\hat{v}(iy)|dy.
	\end{equation*}
	Since $Re[z_1^{\alpha}]=|z_1|^{\alpha}\cos\theta $, we have
	\begin{equation*}\label{semi-discrete Stability3-16}	
		\begin{split}
			\varPhi^{2}(\varepsilon)&\leq4\int_{0}^{\infty}\dfrac{Re[(z_1)^{\alpha}]}{\cos\theta}\cdot|\hat{v}(iy)|^{2}dy\cdot\int_{0}^{\infty}\dfrac{Re[z_1^{-\alpha}]}{\cos(-\theta)}\cdot|\hat{u}(z_1)|^{2}dy\\
			&\leq4\sec^{2}\Big(\dfrac{\pi\alpha}{2}\Big)\int_{0}^{\infty}Re[z_1^{\alpha}]\cdot|\hat{v}(iy)|^{2}dy\cdot\int_{0}^{\infty}Re[z_1^{-\alpha}]\cdot|\hat{u}(z_1)|^{2}dy \\
			&=\sec^{2}\Big(\dfrac{\pi\alpha}{2}\Big)\int_{-\infty}^{\infty}z_1^{\alpha}\cdot|\hat{v}(iy)|^{2}dy\cdot\int_{-\infty}^{\infty}z_1^{-\alpha}\cdot|\hat{u}(z_1)|^{2}dy,	\\
		\end{split}
	\end{equation*}
	where $ \theta\leq\dfrac{\pi\alpha}{2} $ is the argument of a complex number $ z_1^{\alpha} $ and $ Re[z_1] $ is the real part of a complex number $ z_1 $. Next, applying Plancherel's theorem and Lemma \ref{F_T}, we have
	\begin{equation*}\label{semi-discrete Stability3-17}
		\begin{split}
			\varPhi^{2}(\varepsilon)&\leq(2\pi)^{2}\sec^{2}\Big(\dfrac{\pi\alpha}{2}\Big)\int_{0}^{T}v(t)\cdot {}_0^R D_t^{\alpha,\phi(\x)}v(t)dt\\
			&~~~~\cdot\int_{0}^{T}e^{-2\phi(\x)t}u(t)\cdot {}_0I_t^{\alpha,0}u(t)dt.
		\end{split}			
	\end{equation*}	
	
	As a result of this estimate and \eqref{semi-discrete Stability3-13}, we have
	\begin{equation*}\label{semi-discrete Stability3-18}
		\begin{split}
			\Big|\int_{0}^{T}e^{-\phi(\x)t}u(t)\cdot v(t)dt\Big|^{2}&\leq \sec^{2}\Big(\dfrac{\pi\alpha}{2}\Big)\int_{0}^{T}v(t)\cdot {}_0^R D_t^{\alpha,\phi(\x)}v(t)dt\\
			&~~~~\cdot\int_{0}^{T}e^{-2\phi(\x)t}u(t)\cdot {}_0I_t^{\alpha,0}u(t)dt.
		\end{split}
	\end{equation*}
	
	{\bf Case II}:
	$ \phi(\x)< 0 \
	\textrm{for}~ \x \in \Omega\backslash\Omega_{1}$.  By Plancherel's theorem, there exists
	\begin{equation}\label{semi-discrete Stability3-19}
		\begin{split}
			\int_{0}^{T}e^{-\phi(\x)t}u(t)\cdot v(t)dt
			&=\int_{0}^{T}e^{2\phi(\x)t}v(t)\cdot e^{-3\phi(\x)t}u(t)dt\\
			&\leq\dfrac{1}{2\pi}\int_{-\infty}^{\infty}|\overline{\hat{v}(iy-2\phi(\x))}\cdot\hat{u}(iy+3\phi(\x))| dy. \\
		\end{split}
	\end{equation}
	
	Similar to \eqref{semi-discrete Stability3-17}, for any $ \varepsilon>0 $, we have
	\begin{equation*}\label{semi-discrete Stability3-20}
		\begin{split}
			\Psi^{2}(\varepsilon)
			&\leq\sec^{2}\Big(\dfrac{\pi\alpha}{2}\Big)\int_{-\infty}^{\infty}z_2^{\alpha}\cdot|\hat{v}(z_2-\phi(\x))|^{2}dy\cdot\int_{-\infty}^{\infty}z_2^{-\alpha}\cdot|\hat{u}(iy+3\phi(\x))|^{2}dy	\\
			&=\sec^{2}\Big(\dfrac{\pi\alpha}{2}\Big)\int_{-\infty}^{\infty}z_1^{\alpha}\cdot|\hat{v}(iy)|^{2}dy\cdot\int_{-\infty}^{\infty}z_2^{-\alpha}\cdot|\hat{u}(iy+3\phi(\x))|^{2}dy,	\\
		\end{split}
	\end{equation*}
	where
	\begin{equation*}\label{semi-discrete Stability3-21}
		\begin{aligned}
			& z_1=iy+\phi(\x),\ z_2=iy-\phi(\x),\\
			&\Psi(\varepsilon)=\int_{-\varepsilon}^{\varepsilon}|\overline{\hat{v}(iy-2\phi(\x))}\cdot\hat{u}(iy+3\phi(\x))|dy,\\
		\end{aligned}
	\end{equation*}
	and similar arguments to prove \eqref{semi-discrete Stability3-1-10} are used. Further applying Plancherel's theorem, we have
	\begin{equation*}\label{semi-discrete Stability3-22}
		\begin{split}
			\Psi^{2}(\varepsilon) &\leq(2\pi)^{2}\sec^{2}\Big(\dfrac{\pi\alpha}{2}\Big)\int_{0}^{T} v(t)\cdot {}_0^R D_t^{\alpha,\phi(\x)}v(t)dt\\
			&~~~~\cdot\int_{0}^{T}e^{-3\phi(\x)t}u(t)\cdot e^{-3\phi(\x)t}{}_0I_t^{\alpha,-4\phi(\x)}u(t)dt.
		\end{split}
	\end{equation*}
	
	As a result of this estimate and \eqref{semi-discrete Stability3-19}, we have
	\begin{equation*}\label{semi-discrete Stability3-23}
		\begin{split}
			\Big|\int_{0}^{T}e^{-\phi(\x)t}u(t)\cdot v(t)dt\Big|^{2}&\leq \sec^{2}\Big(\dfrac{\pi\alpha}{2}\Big)\int_{0}^{T}  v(t)\cdot {}_0^R D_t^{\alpha,\phi(\x)}v(t)dt\\
			&~~~~\cdot\int_{0}^{T}e^{-6\phi(\x)t}u(t)\cdot {}_0I_t^{\alpha,-4\phi(\x)}u(t)dt.
		\end{split}
	\end{equation*}
	
	Thus the proof is completed.
\end{proof}

\subsection{Notations and function spaces}
\label{Sec2_2}

Now, let's introduce the symbols to be used later. Let $ \Omega_{h}= \{\Omega_{ij}\}_{i=1,\cdots,N_{x}}^{j=1,\cdots,N_{y}} $ denote a tessellation of $ \Omega $ with rectangular element $ \Omega_{ij}=I_{i}\times J_{j}$, where $ I_{i}=(x_{i-\frac{1}{2}},x_{i+\frac{1}{2}}) $ and  $ J_{j}=(y_{j-\frac{1}{2}},y_{j+\frac{1}{2}}) $ with the length $ h_{i}^{x}=x_{i+\frac{1}{2}}-x_{i-\frac{1}{2}} $ and width $ h_{j}^{y}=y_{j+\frac{1}{2}}-y_{j-\frac{1}{2}} $. Set $ h_{ij}=\max(h_{i}^{x},h_{i}^{y}) $ and denote $h=\max\limits_{\Omega_{ij}\in\Omega_{h}}h _{ij}$. We also
assume that $ \Omega_{h} $ is quasi-uniform in this paper.
Moreover, we define 
\begin{equation*}\label{Notations2-1}
	L^{2}(\Omega_{h}):=\{v:\Omega\longrightarrow \mathbb{R}\big|\ v|_{\Omega_{ij}}\in L^{2}(\Omega_{ij}), \, \forall\, \Omega_{ij}\in\Omega_{h}\}
\end{equation*}
and the finite element space consisting of piecewise polynomials
\begin{equation*}\label{Notations1}
	V_{h}^{k}=\{v\in L^{2}(\Omega_{h})\big|\ v|_{\Omega_{ij}}\in Q^{k}(\Omega_{ij}),\, \forall\,\Omega_{ij}\in\Omega_{h}\},
\end{equation*}
\begin{equation*}\label{Notations1_1}
	\textit{\textbf{V}}_{h}^{k}=\{(v,w)\big|v,w\in V_{h}^{k}\},
\end{equation*}
where $Q^{k}(\Omega_{ij})=P^{k}(I_{i})\otimes P^{k}(J_{j})$ with  $\otimes$ being the tensor product. Here $P^{k}(I_{i})$ and $P^{k}(J_{j})$ denote the set of all polynomials of degrees at most $ k $ on edges $I_{i}$ and $J_{j}$, respectively.

The broken Sobolev space $ H^{s}(\Omega_{h}) $, for any given integer $s\geq0$, is defined as
\begin{equation*}\label{Notations2-2}
	H^{s}(\Omega_{h}):=\{v:\Omega\longrightarrow \mathbb{R}\big|\ v|_{\Omega_{ij}}\in H^{s}(\Omega_{ij}),\,  \forall\,\Omega_{ij}\in\Omega_{h}\},
\end{equation*}
\begin{equation*}\label{Notations2-3}
	\textit{\textbf{H}}^{s}(\Omega_{h})=\{(v,w)\big|v,w\in 	H^{s}(\Omega_{h})\},
\end{equation*}
equipped with the broken Sobolev norm
\begin{equation*}\label{Notations4_3}
	\|v\|_{H^{s}(\Omega_{h})}=\Big(\sum_{\Omega_{ij}\in\Omega_{h}}\|v\|_{H^{s}(\Omega_{ij})}^{2}\Big)^{1/2},\quad  \|v\|_{\textbf{\textit{H}}^{s}(\Omega_{h})}=\Big(\sum_{\Omega_{ij}\in\Omega_{h}}\|v\|_{\textbf{\textit{H}}^{s}(\Omega_{ij})}^{2}\Big)^{1/2}.
\end{equation*}
For any $\Omega_{ij}\in\Omega_{h}$, we write $(\cdot,\cdot)_{\Omega_{ij}}$ and $\|\cdot\|_{\Omega_{ij}}$ as the inner product and norm associated with $L^2(\Omega_{ij})$. To simplify symbols, summing over all the elements, we denote
\begin{equation*}\label{Notations4}
	\begin{aligned}
		&(v,r)=\sum_{\Omega_{ij}\in\Omega_{h}}(v,r)_{\Omega_{ij}} ,\quad \|v\|=\Big(\sum_{\Omega_{ij}\in\Omega_{h}}\|v\|_{\Omega_{ij}}^{2}\Big)^{1/2},\\
		&	(\textit{\textbf{v}},\textit{\textbf{r}})=\sum_{\Omega_{ij}\in\Omega_{h}}(\textit{\textbf{v}},\textit{\textbf{r}})_{\Omega_{ij}} ,\quad \|\textit{\textbf{v}}\|=\Big(\sum_{\Omega_{ij}\in\Omega_{h}}\|\textit{\textbf{v}}\|_{\Omega_{ij}}^{2}\Big)^{1/2},\\
		&	\langle v,\textit{\textbf{v}}\cdot\textit{\textbf{n}}\rangle=\sum_{\Omega_{ij}\in\Omega_{h}}	\langle v,\textit{\textbf{v}}\cdot\textit{\textbf{n}}\rangle _{\partial\Omega_{ij}},\quad 	\langle v,\textit{\textbf{v}}\cdot\textit{\textbf{n}}\rangle _{\partial\Omega_{ij}}=\int_{\partial\Omega_{ij}}v(\textit{\textbf{s}})\textit{\textbf{v}}(\textit{\textbf{s}})\cdot \textit{\textbf{n}} d\textit{\textbf{s}},\\
	\end{aligned}
\end{equation*}
where $ \textit{\textbf{n}} $ is the outward unit normal vector to
$ \partial\Omega_{ij} $.

As usual, we refer to the interior information of the element by a superscript ``-'' and to the exterior information by a superscript  ``+''. Let $v_{i+1/2,y}^{\pm}$ and $v_{x,j+1/2}^{\pm}$ represent the limit values
of the function $v(\x)$ at $(x_{i+1/2},y)$ and $(x,y_{j+1/2})$, respectively, i.e., $\lim_{x\rightarrow x_{i+1/2}^{\pm}}v(\x)=v_{i+1/2,y}^{\pm}$ and $\lim_{y\rightarrow y_{j+1/2}^{\pm}}v(\x)=v_{x,j+1/2}^{\pm}$.  Moreover, the weighted averages are denoted by
\begin{equation}\label{jumpaverage}
	\begin{aligned}
		&v_{i+1/2,y}^{(\sigma_1,y)}=\sigma_1 v_{i+1/2,y}^{-}+(1-\sigma_1)v_{i+1/2,y}^{+} ,\\  &v_{x,j+1/2}^{(x,\sigma_2)}=\sigma_2 v_{x,j+1/2}^{-}+(1-\sigma_2)v_{x,j+1/2}^{+},
	\end{aligned}
\end{equation}
where $\sigma_1$, $\sigma_2$ are the given weights.

\section{The spatial semi-discrete LDG scheme}
\label{Sec3}

In the section, we present the semi-discrete LDG scheme for Eq.~\eqref{equivalent}. The proof details of the $L^2$-stability  and the optimal convergence results for the semi-discrete scheme are provided. These results are helpful to the numerical analyses in Section \ref{Sec4}.

\subsection{Variational formulation and numerical scheme}
\label{Sec3_1}

 Let us introduce the auxiliary variable $\textbf{\textit{p}}$, and rewrite Eq.~\eqref{equivalent} as a first-order system
\begin{equation}\label{reequivalent}
	\left\{
	\begin{aligned}
		& {}_0^C D_t^{\alpha,\kappa(\x)}u(t,\x)-\nabla \cdot \textit{\textbf{p}}(t,\x)=0,&&(t,\x)\in (0,T]\times\Omega, \\
		& \textit{\textbf{p}}(t,\x)-\nabla u(t,\x)=0 , &&(t,\x)\in (0,T]\times\Omega,\\
		& u(0,\x)=u_{0}(\x) , &&\x\in \Omega.\\
	\end{aligned} \right.
\end{equation}
Assume that $ (u,\textit{\textbf{p}}) $ as the exact solution of \eqref{reequivalent} belongs to
\begin{equation*}\label{semi-discrete0-1}
	H^{1}(0,T; H^{1}(\Omega_{h}))\times L^{2}(0,T; \textit{\textbf{H}}^{1}(\Omega_{h})).
\end{equation*}
Taking the inner product of test
functions $ (v,\textit{\textbf{w}})$ and doing integration by part, from \eqref{reequivalent} we get
\begin{equation}\label{Variational}
	\left\{
	\begin{aligned}
		& ({}_0^C D_t^{\alpha,\kappa(\x)}u(t,\x),v)_{\Omega_{ij}}+(\textit{\textbf{p}}(t,\x),\nabla v)_{\Omega_{ij}}-\langle\textit{\textbf{p}}(t,\x)\cdot\textit{\textbf{n}}, v\rangle_{\partial\Omega_{ij}}=0  , \\
		&(\textit{\textbf{p}}(t,\x),\textit{\textbf{w}})_{\Omega_{ij}}+(u(t,\x),\nabla\cdot\textit{\textbf{w}})_{\Omega_{ij}}-\langle u(t,\x),\textit{\textbf{w}}\cdot\textit{\textbf{n}}\rangle_{\partial\Omega_{ij}}=0 ,
	\end{aligned}\right.
\end{equation}
 for $ \Omega_{ij}\in\Omega_{h}$, where $ (v,\textit{\textbf{w}})\in H^{1}(\Omega_{h})\times \textit{\textbf{H}}^{1}(\Omega_{h}) $.

To obtain the spatial semi-discrete scheme for Eq.~\eqref{equivalent}, we present the numerical fluxes $\widehat{u_{h}}$ and $\widehat{\textit{\textbf{p}}_{h}}$ as single valued functions defined at the cell interfaces, in general depending on the values of the numerical solution $u_{h}$ and $\textit{\textbf{p}}_{h}$ from both sides of the interfaces
\begin{equation*}
	\begin{aligned}
		&(\widehat{u_{h}})_{x,j+1/2}=\widehat{u_{h}}((u_h)_{x,j+1/2}^{-},(u_h)_{x,j+1/2}^{+}),\\
		&(\widehat{u_{h}})_{i+1/2,y}=\widehat{u_{h}}((u_h)_{i+1/2,y}^{-},(u_h)_{i+1/2,y}^{+}),\\
		&(\widehat{\textit{\textbf{p}}_h})_{x,j+1/2}=[(\widehat{p_h})_{x,j+1/2}\quad (\widehat{q_h})_{x,j+1/2}]^{T},\\
		&(\widehat{\textit{\textbf{p}}_h})_{i+1/2,y}=[(\widehat{p_h})_{i+1/2,y}\quad (\widehat{q_h})_{i+1/2,y}]^{T},
	\end{aligned}
\end{equation*}
where $\textit{\textbf{p}}_h=[p_h\quad q_h]^{T}$.

Now, we construct the spatial semi-discrete LDG scheme of Eq.~\eqref{equivalent}. Find $ (u_{h}(t,\x),\textit{\textbf{p}}_{h}(t,\x)) \in H^{1}(0,T; V_{h}^{k})\times L^{2}(0,T; \textit{\textbf{V}}_{h}^{k})$,
which is the approximation of $ (u(t,\x),\textit{\textbf{p}}(t,\x)) $, such that
\begin{equation}\label{semi-discrete}
	\left\{
	\begin{aligned}
		& ({}_0^C D_t^{\alpha,\kappa(\x)}u_h,v)_{\Omega_{ij}}+(\textit{\textbf{p}}_h,\nabla v)_{\Omega_{ij}}-\langle\widehat{\textit{\textbf{p}}_h}\cdot\textit{\textbf{n}}, v\rangle_{\partial\Omega_{ij}}=0  , \\
		&(\textit{\textbf{p}}_h,\textit{\textbf{w}})_{\Omega_{ij}}+(u_h,\nabla\cdot\textit{\textbf{w}})_{\Omega_{ij}}-\langle \widehat{u_h},\textit{\textbf{w}}\cdot\textit{\textbf{n}}\rangle_{\partial\Omega_{ij}}=0 ,\\
		&(u_h(0),v)_{\Omega_{ij}}=(u_{0}(\x),v)_{\Omega_{ij}},
	\end{aligned}\right.
\end{equation}
where $ (v,\textit{\textbf{w}})\in V_{h}^{k}\times \textit{\textbf{V}}_{h}^{k} $ for $ \Omega_{ij}\in\Omega_{h}$.

In the following sections, we choose the generalized alternating numerical fluxes
\begin{equation}\label{fluxes_1}
	\begin{aligned}
		&(\widehat{u_{h}})_{i+1/2,y}=(u_{h})_{i+1/2,y}^{(\sigma_1,y)}, \quad (\widehat{\textit{\textbf{p}}_{h}})_{i+1/2,y}=(\textit{\textbf{p}}_{h})_{i+1/2,y}^{(1-\sigma_1,y)},\\
		&(\widehat{u_{h}})_{x,j+1/2}=(u_{h})_{x,j+1/2}^{(x,\sigma_2)}, \quad (\widehat{\textit{\textbf{p}}_{h}})_{x,j+1/2}=(\textit{\textbf{p}}_{h})_{x,j+1/2}^{(x,1-\sigma_2)},
	\end{aligned}
\end{equation}
where $\sigma_1\neq1/2$, $\sigma_2\neq1/2$, $ i=1,\cdots,N_{x},\,j=1,\cdots,N_{y} $.
\begin{remark}
	When $\sigma_1=\sigma_2=1/2$, the generalized alternating numerical fluxes are called central numerical fluxes, which can not guarantee the optimal error estimates of the numerical scheme in theory. The numerical results for central numerical fluxes are given in Section \ref{Sec5}.
	\end{remark}

\subsection{Stability analysis of the semi-discrete scheme}
\label{Sec3_2}

To ensure the validity of the numerical scheme, we need to prove the $L^{2}$-stability and convergence of the semi-discrete LDG scheme \eqref{semi-discrete}. First, we consider the bilinear form
\begin{equation}\label{semi-discrete Stability1}
	\begin{split}
		B(u_{h},v;\textit{\textbf{p}}_{h},\textit{\textbf{w}})&=(u_h,\nabla\cdot\textit{\textbf{w}})-\langle \widehat{u_h},\textit{\textbf{w}}\cdot\textit{\textbf{n}}\rangle+(\textit{\textbf{p}}_h,\nabla v)-\langle\widehat{\textit{\textbf{p}}_h}\cdot\textit{\textbf{n}}, v\rangle.
	\end{split}
\end{equation}

\begin{lemma} \label{bilinear}
	Assume that $u_{h}$ and $\textit{\textbf{p}}_{h}$ are defined in the rectangular region $\Omega_{h}$ with periodic boundary conditions and the numerical fluxes $\widehat{u_{h}}$ and  $\widehat{\textit{\textbf{p}}_{h}}$ are given in \eqref{fluxes_1}. Then
	\begin{equation}\label{semi-discrete Stability3}
		B(u_{h},u_{h};\textit{\textbf{p}}_{h},\textit{\textbf{p}}_{h})=0.
	\end{equation}
\end{lemma}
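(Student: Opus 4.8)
The plan is to show that the bilinear form $B(u_h, u_h; \textit{\textbf{p}}_h, \textit{\textbf{p}}_h)$ collapses to zero by collecting the volume terms into exact divergences, applying elementwise integration by parts in reverse, and then checking that the resulting boundary (edge) contributions cancel when the generalized alternating fluxes \eqref{fluxes_1} are inserted and the periodic boundary conditions are used. Concretely, set $v = u_h$ and $\textit{\textbf{w}} = \textit{\textbf{p}}_h$ in \eqref{semi-discrete Stability1}, so that
\begin{equation*}
	B(u_h,u_h;\textit{\textbf{p}}_h,\textit{\textbf{p}}_h) = (u_h,\nabla\cdot\textit{\textbf{p}}_h) + (\textit{\textbf{p}}_h,\nabla u_h) - \langle \widehat{u_h},\textit{\textbf{p}}_h\cdot\textit{\textbf{n}}\rangle - \langle\widehat{\textit{\textbf{p}}_h}\cdot\textit{\textbf{n}}, u_h\rangle.
\end{equation*}
First I would combine the two volume terms: on each element $\Omega_{ij}$, $(u_h,\nabla\cdot\textit{\textbf{p}}_h)_{\Omega_{ij}} + (\textit{\textbf{p}}_h,\nabla u_h)_{\Omega_{ij}} = \int_{\Omega_{ij}}\nabla\cdot(u_h\textit{\textbf{p}}_h)\,d\x = \langle u_h, \textit{\textbf{p}}_h\cdot\textit{\textbf{n}}\rangle_{\partial\Omega_{ij}}$ by the divergence theorem (valid since $u_h\textit{\textbf{p}}_h$ is polynomial, hence smooth, on each element). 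Summing over all elements gives $B = \langle u_h, \textit{\textbf{p}}_h\cdot\textit{\textbf{n}}\rangle - \langle \widehat{u_h},\textit{\textbf{p}}_h\cdot\textit{\textbf{n}}\rangle - \langle\widehat{\textit{\textbf{p}}_h}\cdot\textit{\textbf{n}}, u_h\rangle$, a pure sum over edges.

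Next I would reorganize the edge sum by regrouping the contributions of each interior edge from the two elements sharing it, splitting into the vertical edges $\{x = x_{i+1/2}\}$ and the horizontal edges $\{y = y_{j+1/2}\}$ separately (the normal $\textit{\textbf{n}}$ picks out only the $p_h$-component on vertical edges and only the $q_h$-component on horizontal edges). On a vertical edge at $x_{i+1/2}$, the total contribution involves $u_h^- p_h^- - u_h^+ p_h^+$ from the divergence term (with the sign of $\textit{\textbf{n}}$ from each side), minus $\widehat{u_h}(p_h^- - p_h^+)$, minus $\widehat{p_h}(u_h^- - u_h^+)$. Substituting $\widehat{u_h} = \sigma_1 u_h^- + (1-\sigma_1)u_h^+$ and $\widehat{p_h} = (1-\sigma_1)p_h^- + \sigma_1 p_h^+$, the coefficients are arranged exactly so that the algebraic identity $u_h^- p_h^- - u_h^+ p_h^+ = \widehat{u_h}(p_h^- - p_h^+) + \widehat{p_h}(u_h^- - u_h^+)$ holds identically in $\sigma_1$ — this is the standard "alternating flux" cancellation, and the key point is that the weights on $u$ and on $\textit{\textbf{p}}$ are complementary ($\sigma_1$ versus $1-\sigma_1$). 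The same identity with $\sigma_2$ handles horizontal edges. For the outermost edges of $\Omega$, periodicity identifies the "$+$" trace at the right/top boundary with the "$-$" trace at the left/bottom boundary, so those terms pair up exactly as interior edges do and cancel by the same identity. Hence every edge contributes zero and $B(u_h,u_h;\textit{\textbf{p}}_h,\textit{\textbf{p}}_h) = 0$.

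The only delicate point — and the step I would write out most carefully — is the bookkeeping of signs and of which trace is "interior" versus "exterior" when the contributions of a shared edge are collected from its two adjacent elements; once the orientation conventions for $v^{\pm}$ and for $\textit{\textbf{n}}$ are pinned down, the cancellation is a one-line algebraic identity that does not even depend on the particular values of $\sigma_1,\sigma_2$ (so it holds for the central flux as well; what the central flux loses is not stability but the optimal order, as noted in the Remark). No estimate or inequality is needed here — the result is an exact identity.
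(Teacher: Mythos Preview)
Your proposal is correct and follows exactly the approach the paper indicates: the paper's own proof simply states that the identity ``can be directly calculated by substituting \eqref{fluxes_1} into \eqref{semi-discrete Stability1} and using the periodic boundary conditions'' and then omits the details, which is precisely the computation you carry out. Your explicit edge-by-edge algebraic identity $u_h^- p_h^- - u_h^+ p_h^+ = \widehat{u_h}(p_h^- - p_h^+) + \widehat{p_h}(u_h^- - u_h^+)$ (verified for any $\sigma_1$) is the heart of that omitted calculation, and your observation that the cancellation holds regardless of the value of $\sigma_1,\sigma_2$ is a nice clarifying remark.
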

\begin{proof}
	The formula \eqref{semi-discrete Stability3} can be directly calculated by substituting \eqref{fluxes_1} into \eqref{semi-discrete Stability1} and using the periodic boundary conditions of $u_{h}(t,\x)$ and $\textit{\textbf{p}}_{h}(t,\x)$. Therefore, we omit the details.	
	
\end{proof}	

\begin{theorem}[\label{Th3_1}$L^{2}$-stability] The semi-discrete LDG scheme \eqref{semi-discrete} 
	is unconditionally stable, i.e.,
	\begin{equation}\label{semi-discrete Stability4}
		\int_{0}^{T}\|u_{h}(t,\x)\|^{2}dt\leq C\|u_{h}(0,\x)\|^{2},
	\end{equation}
	where $ C=\sec^{2}\Big(\dfrac{\pi\alpha}{2}\Big)\cdot \dfrac{T\max{\{C_{max}^{6},1\}}}{C_{\alpha}\Gamma(2-\alpha)} $. Here  $C_{max} $ and $C_{\alpha}$ are defined in \eqref{assumptions2} and Lemma \ref{L2-1}, respectively.
\end{theorem}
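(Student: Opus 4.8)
The plan is to combine the spatial LDG energy identity with the two properties of the fractional substantial calculus from Lemmas~\ref{L2-1} and~\ref{L2-2}. \emph{Energy identity.} First I would take $v=u_{h}$ in the first equation of \eqref{semi-discrete} and $\textit{\textbf{w}}=\textit{\textbf{p}}_{h}$ in the second, sum over all $\Omega_{ij}\in\Omega_{h}$, and add; the four flux/volume contributions assemble precisely into the bilinear form $B(u_{h},u_{h};\textit{\textbf{p}}_{h},\textit{\textbf{p}}_{h})$, which vanishes by Lemma~\ref{bilinear} thanks to the generalized alternating fluxes \eqref{fluxes_1} and periodicity. This leaves, for every $t\in(0,T]$,
\begin{equation*}
	\big({}_0^C D_t^{\alpha,\kappa(\x)}u_{h}(t,\x),u_{h}(t,\x)\big)+\|\textit{\textbf{p}}_{h}(t,\x)\|^{2}=0,
\end{equation*}
hence $\big({}_0^C D_t^{\alpha,\kappa(\x)}u_{h},u_{h}\big)\le 0$. \emph{Reduction to the Riemann--Liouville substantial derivative.} Since ${}_0^C D_t^{\alpha}g={}_0^R D_t^{\alpha}(g-g(0))$ and ${}_0^R D_t^{\alpha}1=\omega(t)$ with $\omega(t)=t^{-\alpha}/\Gamma(1-\alpha)$, the exponential rescaling $g=e^{\kappa(\x)t}u_{h}$ yields ${}_0^C D_t^{\alpha,\kappa(\x)}u_{h}={}_0^R D_t^{\alpha,\kappa(\x)}u_{h}-\omega(t)e^{-\kappa(\x)t}u_{h}(0,\x)$. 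Substituting this, integrating over $[0,T]$, and using Fubini together with Lemma~\ref{L2-1} (applied for each fixed $\x$) on the left, I arrive at
\begin{equation*}
	C_{\alpha}T^{-\alpha}\int_{0}^{T}\|u_{h}(t,\x)\|^{2}dt\le\int_{0}^{T}\big({}_0^R D_t^{\alpha,\kappa(\x)}u_{h},u_{h}\big)dt\le\int_{\Omega}u_{h}(0,\x)\Big(\int_{0}^{T}\omega(t)e^{-\kappa(\x)t}u_{h}(t,\x)\,dt\Big)d\x.
\end{equation*}

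\emph{Controlling the memory term.} Write $A:=\int_{0}^{T}\big({}_0^R D_t^{\alpha,\kappa(\x)}u_{h},u_{h}\big)dt\ (\ge 0)$ and $g(\x):=\int_{0}^{T}\omega(t)e^{-\kappa(\x)t}u_{h}(t,\x)\,dt$; by Cauchy--Schwarz in $\x$ the right-hand side above is at most $\|u_{h}(0,\x)\|\,\|g\|$. For each fixed $\x$, Lemma~\ref{L2-2} with $v=u_{h}(\cdot,\x)$, $u=\omega$ and $\phi=\kappa(\x)$ bounds $|g(\x)|^{2}$ by $\sec^{2}(\pi\alpha/2)$ times $\int_{0}^{T}u_{h}\,{}_0^R D_t^{\alpha,\kappa(\x)}u_{h}\,dt$ times $\int_{0}^{T}e^{2(|\kappa(\x)|-2\kappa(\x))t}\omega\,{}_0I_t^{\alpha,2(|\kappa(\x)|-\kappa(\x))}\omega\,dt$. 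The last factor is harmless: the Beta-integral identity ${}_0I_t^{\alpha}\omega\equiv 1$ forces $0\le{}_0I_t^{\alpha,c}\omega\le 1$ for every $c\ge 0$, and a short case distinction on the sign of $\kappa(\x)$, using $e^{-\kappa(\x)t}\le C_{max}$ from \eqref{assumptions2} and $\int_{0}^{T}\omega(t)\,dt=T^{1-\alpha}/\Gamma(2-\alpha)$, shows this factor never exceeds $\max\{C_{max}^{6},1\}\,T^{1-\alpha}/\Gamma(2-\alpha)$. Integrating the pointwise estimate over $\Omega$ (Fubini once more) gives $\|g\|^{2}\le M A$, where $M:=\sec^{2}(\pi\alpha/2)\max\{C_{max}^{6},1\}\,T^{1-\alpha}/\Gamma(2-\alpha)$.

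\emph{Closing.} Combining the two steps gives $A\le\|u_{h}(0,\x)\|\sqrt{MA}$, hence $A\le M\|u_{h}(0,\x)\|^{2}$ (the case $A=0$ being trivial), and then the left inequality of the reduction step gives $\int_{0}^{T}\|u_{h}\|^{2}dt\le C_{\alpha}^{-1}T^{\alpha}A\le C_{\alpha}^{-1}T^{\alpha}M\|u_{h}(0,\x)\|^{2}$, which is exactly \eqref{semi-discrete Stability4} with $C=\sec^{2}(\pi\alpha/2)\,T\max\{C_{max}^{6},1\}/(C_{\alpha}\Gamma(2-\alpha))$. The energy identity and the Fubini bookkeeping are routine; I expect the only genuine obstacle to be the memory-term estimate, because Lemma~\ref{L2-2} must be invoked with the singular weight $\omega(t)=t^{-\alpha}/\Gamma(1-\alpha)$, which is merely integrable and, for $\alpha\ge 1/2$, not even in $L^{2}(0,T)$. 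I would handle this by first proving the bound with $\omega$ replaced by its truncation $\omega\chi_{[\varepsilon,T]}$—for which Lemma~\ref{L2-2} applies verbatim and the estimate of the last factor by $M$ stays uniform in $\varepsilon$ since ${}_0I_t^{\alpha}(\omega\chi_{[\varepsilon,T]})\le{}_0I_t^{\alpha}\omega\equiv 1$—and then letting $\varepsilon\to 0$ via dominated convergence, the time regularity $u_{h}\in H^{1}(0,T;V_{h}^{k})$ guaranteeing that all the time integrals involved converge.
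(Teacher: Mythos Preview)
Your proof is correct and follows essentially the same route as the paper: the energy identity from Lemma~\ref{bilinear}, the splitting ${}_0^C D_t^{\alpha,\kappa}u_h={}_0^R D_t^{\alpha,\kappa}u_h-\omega_{1-\alpha}(t)e^{-\kappa t}u_h(0)$, Lemma~\ref{L2-2} (pointwise in $\x$, then integrated) for the memory term, and Lemma~\ref{L2-1} for the coercivity lower bound, combined exactly as you describe to produce the stated constant. Your truncation argument for the singular weight $\omega_{1-\alpha}$ is in fact more careful than the paper, which invokes Lemma~\ref{L2-2} directly without commenting on the failure of piecewise continuity at $t=0$; your $\varepsilon\to0$ limit is a clean way to close that gap.
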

\begin{proof}
	Setting the test functions $v=u_{h}$ and  $\textit{\textbf{w}}=\textit{\textbf{p}}_{h}$, and summing up \eqref{semi-discrete} over all $j$ yield
	\begin{equation*}\label{semi-discrete Stability5}
		({}_0^C D_t^{\alpha,\kappa(\x)}u_{h},u_{h})+B(u_{h},u_{h};\textit{\textbf{p}}_{h},\textit{\textbf{p}}_{h})+\|\textit{\textbf{p}}_{h}\|^{2}=0,
	\end{equation*}
	where the bilinear form $B(\cdot,\cdot;\cdot,\cdot)$ is defined in \eqref{semi-discrete Stability1}.
Employing Lemma \ref{bilinear} leads to
	\begin{equation}\label{semi-discrete Stability6}
		({}_0^C D_t^{\alpha,\kappa(\x)}u_{h},u_{h})+\|\textit{\textbf{p}}_{h}\|^{2}=0.
	\end{equation}
	
	Since $ {}_0^C D_t^{\alpha}u={}_0^{R} D_t^{\alpha}u-\omega_{1-\alpha}(t)u(0) $  and  \eqref{semi-discrete Stability6} , one has
	\begin{equation}\label{semi-discrete Stability6-1}
		\begin{split}
			\int_{0}^{T}({}_0^C D_t^{\alpha,\kappa(\x)}u_{h},u_{h})dt&=\int_{0}^{T}({}_0^{R} D_t^{\alpha,\kappa(\x)}u_{h},u_{h})dt\\
			&~~~~-\int_{0}^{T}(\omega_{1-\alpha}(t)e^{-\kappa(\x)t}u_{h}(0,\x),u_{h})dt	\\
			&\leq 0,
		\end{split}
	\end{equation}
	where $ \omega_{1-\alpha}(t)=t^{-\alpha}/\Gamma(1-\alpha)$. Denote the first and second line of the right hand side of \eqref{semi-discrete Stability6-1} by $\uppercase\expandafter{\romannumeral1}$ and $\uppercase\expandafter{\romannumeral2}$, respectively. The analyses to $\uppercase\expandafter{\romannumeral1}$ and $\uppercase\expandafter{\romannumeral2}$ rely on Lemma\,\ref{L2-1} and Lemma\,\ref{L2-2}. Taking the functions $ v(t,\x)=u_{h} $ and $u(t,\x)=\omega_{1-\alpha}(t)u_{h}(0,\x) $ in Lemma\,\ref{L2-2} results in
	\begin{equation}\label{semi-discrete Stability6-2}
		\begin{split}
			\uppercase\expandafter{\romannumeral2}
			&\leq\sec\Big(\dfrac{\pi\alpha}{2}\Big)\uppercase\expandafter{\romannumeral1}^{1/2}\cdot\Big(\int_{0}^{T}\int_{\Omega}e^{-2\kappa(\x)t}\omega_{1-\alpha}(t)u_{h}(0,\x)\\
			&~~~~\cdot {}_{0}I_{t}^{\alpha}(e^{2(|\kappa(\x)|-\kappa(\x))t}\omega_{1-\alpha}(t)u_{h}(0,\x))d\x dt\Big)^{1/2}\\
			&\leq\sec\Big(\dfrac{\pi\alpha}{2}\Big)\uppercase\expandafter{\romannumeral1}^{1/2}\cdot \max{\{C_{max}^{3},1\}} \Big(\dfrac{T^{1-\alpha}}{\Gamma(2-\alpha)}\Big)^{1/2}\|u_{h}(0,\x)\|\\
			&\leq\dfrac{1}{2}\uppercase\expandafter{\romannumeral1}+\frac{1}{2}\sec^{2}\Big(\frac{\pi\alpha}{2}\Big)\cdot \max{\{C_{max}^{6},1\}}\dfrac{T^{1-\alpha}}{\Gamma(2-\alpha)}\|u_{h}(0,\x)\|^{2},
		\end{split}	
	\end{equation}
	where $ C_{\kappa} $, $ C_{max} $ are defined by \eqref{assumptions1}, \eqref{assumptions2} and the fact that ${}_0I_t^{\alpha}(\omega_{1-\alpha}(t))=1$ is used.

	Combining the estimates \eqref{semi-discrete Stability6-1} and \eqref{semi-discrete Stability6-2} leads to
	\begin{equation*}\label{semi-discrete Stability6-4}
		\uppercase\expandafter{\romannumeral1}\leq\sec^{2}\Big(\dfrac{\pi\alpha}{2}\Big)\cdot \max{\{C_{max}^{6},1\}}\dfrac{T^{1-\alpha}}{\Gamma(2-\alpha)}\|u_{h}(0,\x)\|^{2}.
	\end{equation*}
		In addition, taking the function $u(t,\x)=u_{h}(t,\x)$ in Lemma\,\ref{L2-1}, there exists 
	\begin{equation*}\label{semi-discrete Stability6-5}
		\uppercase\expandafter{\romannumeral1}\geq C_{\alpha}T^{-\alpha}\int_{0}^{T}\|u_{h}(t,\x)\|^{2}dt.
	\end{equation*}
		As a result of these estimates, one can obtain
	\begin{equation*}\label{semi-discrete Stability6-6}
		\int_{0}^{T}\|u_{h}(t,\x)\|^{2}dt\leq C\|u_{h}(0,\x)\|^{2},
	\end{equation*}
which finishes the proof of the stability result.
\end{proof}

\subsection{Error estimates of the semi-discrete scheme}
\label{Sec3_3}
In the following, we will present error estimates for the LDG semi-discrete scheme \eqref{semi-discrete}. The optimal convergence results of order $k + 1$ are lost if the projection error at all element boundaries cannot be treated in a nice way. In light of this point, the optimal error estimates are obtained in the forthcoming error analysis by using the approximation properties of the so-called Gauss-Radau projection. The generalized Gauss-Radau projection \cite{Cheng-2017-86} $P_{\sigma_1,\sigma_2}:L^2(\Omega_h)\rightarrow V_{h}^{k}$  of the scalar function is defined by

\begin{equation*}\label{semi-discrete error1}
	\begin{aligned}
		&(P_{\sigma_1,\sigma_2}u,v)_{\Omega_{ij}}=(u,v)_{\Omega_{ij}},\quad\quad\quad\quad\quad\quad\quad~ \forall v\in Q^{k-1}(\Omega_{ij}),\\	&\Big((P_{\sigma_1,\sigma_2}u)^{(\sigma_1,y)}_{i+1/2,y},v\Big)_{J_{j}}=\Big(u^{(\sigma_1,y)}_{i+1/2,y},v\Big)_{J_{j}},\quad \forall v\in P^{k-1}(J_{j}),\\	&\Big((P_{\sigma_1,\sigma_2}u)^{(x,\sigma_2)}_{x,j+1/2},v\Big)_{I_{i}}=\Big(u^{(x, \sigma_2)}_{x,j+1/2},v\Big)_{I_{i}},\quad \forall v\in P^{k-1}(I_{i}),\\
		&(P_{\sigma_1,\sigma_2}u)^{(\sigma_1,\sigma_2)}_{i+1/2,j+1/2}=u^{(\sigma_1,\sigma_2)}_{i+1/2,j+1/2},\\
	\end{aligned}
\end{equation*}
for any $i=1,\cdots,N_{x}$ and $j=1,\cdots,N_{y}$. Here and in what follows, we use the
notation defined in \eqref{jumpaverage} to represent the weighted average on each edge and use
\begin{equation*}
	\begin{split}
		u^{(\sigma_1,\sigma_2)}_{i+1/2,j+1/2}&=\sigma_1\sigma_2u(x_{i+1/2}^{-},y_{j+1/2}^{-})+(1-\sigma_1)(1-\sigma_2)u(x_{i+1/2}^{+},y_{j+1/2}^{+})\\
		&~~~+\sigma_1(1-\sigma_2)u(x_{i+1/2}^{-},y_{j+1/2}^{+})+(1-\sigma_1)\sigma_2u(x_{i+1/2}^{+},y_{j+1/2}^{-}),
	\end{split}
\end{equation*}
to represent the weighted average at the corner point.

In order to deal with auxiliary variable, we propose the following generalized Gauss-Radau projection \cite{Cheng-2017-86} $Q_{\sigma_1}$, $Q_{\sigma_2}:H^1(\Omega_h)\rightarrow V_{h}^{k}$ of scalar function
\begin{equation}\label{semi-discrete error1_1}
	\begin{aligned}
		&(Q_{\sigma_1}u,v)_{\Omega_{ij}}=(u,v)_{\Omega_{ij}},\quad \forall v\in P^{k-1}(I_{i})\otimes P^{k}(J_{j}),\\
		&(Q_{\sigma_2}u,v)_{\Omega_{ij}}=(u,v)_{\Omega_{ij}},\quad \forall v\in P^{k}(I_{i})\otimes P^{k-1}(J_{j}),\\
		&\Big((Q_{\sigma_1}u)^{(\sigma_1,y)}_{i+1/2,y},v\Big)_{J_{j}}=\Big(u^{(\sigma_1,y)}_{i+1/2,y},v\Big)_{J_{j}},\quad \forall v\in P^{k}(J_{j}),\\
		&\Big((Q_{\sigma_2}u)^{(x,\sigma_2)}_{x,j+1/2},v\Big)_{I_{i}}=\Big(u^{(x,\sigma_2)}_{x,j+1/2},v\Big)_{I_{i}},\quad \forall v\in P^{k}(I_{i}),\\		
	\end{aligned}
\end{equation}
for any $i=1,\cdots,N_{x}$ and $j=1,\cdots,N_{y}$. Moreover, the generalized Gauss-Radau projection $Q_{\sigma_1,\sigma_2}:\textbf{\textit{H}}^1(\Omega_h)\rightarrow \textbf{\textit{V}}_{h}^{k}$ of vector function can be defined as $Q_{\sigma_1,\sigma_2}\textit{\textbf{u}}=[Q_{\sigma_1}u_1\quad Q_{\sigma_2}u_2]^{T}$, where $\textit{\textbf{u}}=[u_1\quad u_2]^{T}$. Then, the following three approximation properties hold \cite{Cheng-2017-86}.
\begin{lemma}\label{projection_err_u}
	Assume that $u(\x)\in H^{s+1}(\Omega_{h})\cap H^{2}(\Omega_{h})$ with $ \Omega_h \subset \mathbb{R}^2$ and $ s\geq0 $. If $\sigma_1\neq1/2$ and $\sigma_2\neq1/2$, the projections $P_{\sigma_1,\sigma_2}$ satisfy
	\begin{equation*}\label{semi-discrete error3}
		\begin{aligned}	
			&\|\eta\|+h^{1/2}\|\eta\|_{L^{2}(\Gamma_{h})}\leq Ch^{\min(k+1,s+1)}\|u\|_{H^{s+1}(\Omega_{h})},\\
		\end{aligned}
	\end{equation*}
	where $\eta=P_{\sigma_1,\sigma_2}u(\x)-u(\x)$. The positive constant $C$ is independent of $u(\x)$ and $h$. Here $\Gamma_{h}$ denotes the set of boundary points of all elements $ \Omega_{ij}\in\Omega_{h}$.
\end{lemma}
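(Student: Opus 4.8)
The plan is to establish the approximation property of the generalized Gauss--Radau projection $P_{\sigma_1,\sigma_2}$ by reducing the two-dimensional estimate to tensor products of the known one-dimensional results, and then invoking a standard scaling (Bramble--Hilbert) argument on the reference element. First I would recall from \cite{Cheng-2017-86} the one-dimensional generalized Gauss--Radau projection $P^x_{\sigma_1}:L^2(I_i)\to P^k(I_i)$ determined by $(P^x_{\sigma_1}u-u,v)_{I_i}=0$ for all $v\in P^{k-1}(I_i)$ together with the single interface condition $(P^x_{\sigma_1}u)^{(\sigma_1)}_{i+1/2}=u^{(\sigma_1)}_{i+1/2}$, and its companion $P^y_{\sigma_2}$ on $J_j$; the one-dimensional estimates $\|u-P^x_{\sigma_1}u\|_{L^2(I_i)}+h^{1/2}\|u-P^x_{\sigma_1}u\|_{L^\infty(\partial I_i)}\le Ch^{\min(k+1,s+1)}\|u\|_{H^{s+1}(I_i)}$ are available there for $\sigma_1\neq1/2$ (the condition $\sigma_1\neq1/2$ guarantees the projection is well-defined, since at $\sigma_1=1/2$ the interface functional degenerates). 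Then I would verify that the tensor-product operator $P_{\sigma_1,\sigma_2}=P^x_{\sigma_1}\otimes P^y_{\sigma_2}$ satisfies exactly the four defining relations of $P_{\sigma_1,\sigma_2}$ listed before the lemma, including the corner condition, which follows because the tensor product of the two interface functionals reproduces precisely the weighted corner average $u^{(\sigma_1,\sigma_2)}_{i+1/2,j+1/2}$.

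Next I would carry out the error estimate on a single element $\Omega_{ij}=I_i\times J_j$. Writing $\eta=P_{\sigma_1,\sigma_2}u-u$ and using $P^x_{\sigma_1}\otimes P^y_{\sigma_2}-\mathrm{Id}=(P^x_{\sigma_1}-\mathrm{Id})\otimes P^y_{\sigma_2}+\mathrm{Id}\otimes(P^y_{\sigma_2}-\mathrm{Id})$, I would bound each term by applying the one-dimensional estimate in one variable while using the $L^2$-stability of the one-dimensional projections in the other variable (stability on the reference interval transfers to $I_i$, $J_j$ uniformly by scaling because $\Omega_h$ is quasi-uniform). Summing the squares of the elementwise bounds over all $\Omega_{ij}\in\Omega_h$ and using $\sum_{ij}\|u\|_{H^{s+1}(\Omega_{ij})}^2=\|u\|_{H^{s+1}(\Omega_h)}^2$ yields the volume estimate $\|\eta\|\le Ch^{\min(k+1,s+1)}\|u\|_{H^{s+1}(\Omega_h)}$. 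For the boundary term $\|\eta\|_{L^2(\Gamma_h)}$, I would use the multiplicative trace inequality $\|\phi\|_{L^2(\partial\Omega_{ij})}^2\le C\big(h^{-1}\|\phi\|_{\Omega_{ij}}^2+h\,|\phi|_{H^1(\Omega_{ij})}^2\big)$ applied to $\eta$ together with an inverse inequality, or alternatively bound it directly via the tensor-product structure using the one-dimensional endpoint estimates; either way one picks up the factor $h^{-1/2}$, which is exactly compensated by the $h^{1/2}$ weight in the statement.

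The main obstacle I anticipate is the bookkeeping around the corner (point) condition and the boundary term: one must check that the tensor-product projection indeed satisfies the prescribed corner matching $(P_{\sigma_1,\sigma_2}u)^{(\sigma_1,\sigma_2)}_{i+1/2,j+1/2}=u^{(\sigma_1,\sigma_2)}_{i+1/2,j+1/2}$ with the particular four-term weighting written above, and that the $L^2(\Gamma_h)$ estimate—which involves traces on all four edges of every element—can be controlled without losing a power of $h$ beyond the stated $h^{-1/2}$. A clean way around this is to avoid re-deriving everything and instead cite \cite{Cheng-2017-86} directly for the one-dimensional building blocks and for the fact that tensor products of such projections inherit the optimal approximation order; the only genuinely new work is the elementwise tensor-splitting estimate and the summation over $\Omega_h$, both of which are routine once the one-dimensional facts and the quasi-uniformity of $\Omega_h$ are in hand. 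I would also note that the hypothesis $u\in H^{s+1}(\Omega_h)\cap H^2(\Omega_h)$ is what makes the trace inequality applicable (so that $\eta\in H^1$ elementwise) and ensures the projection is well-defined on each element.
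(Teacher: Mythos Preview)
The paper does not prove this lemma at all: it is stated as one of three approximation properties quoted from \cite{Cheng-2017-86}, with no argument given. Your proposal therefore goes well beyond the paper, and the tensor-product strategy you outline is indeed the way the result is established in that reference, so your sketch is on the right track.

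One technical point does need correction. You describe the one-dimensional building block as a local operator $P^x_{\sigma_1}:L^2(I_i)\to P^k(I_i)$, but the defining interface condition $(P^x_{\sigma_1}u)^{(\sigma_1)}_{i+1/2}=u^{(\sigma_1)}_{i+1/2}$ involves the weighted average $\sigma_1 v^-_{i+1/2}+(1-\sigma_1)v^+_{i+1/2}$, which couples the projection on $I_i$ to that on $I_{i+1}$. Hence $P^x_{\sigma_1}$ is a \emph{global} projection on the whole one-dimensional mesh (for periodic boundaries, a block-circulant linear system), and its well-posedness and uniform $L^2$-stability when $\sigma_1\neq 1/2$ are nontrivial facts proved in \cite{Cheng-2017-86}. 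Your decomposition $(P^x_{\sigma_1}-\mathrm{Id})\otimes P^y_{\sigma_2}+\mathrm{Id}\otimes(P^y_{\sigma_2}-\mathrm{Id})$ and the subsequent bounds are still valid once you treat $P^x_{\sigma_1}$ and $P^y_{\sigma_2}$ as global one-dimensional operators and invoke their uniform stability; just be sure to phrase it that way rather than elementwise. Also, for the boundary term, the inverse-inequality route is awkward because $\eta$ is not a polynomial; the alternative you mention---using the one-dimensional endpoint estimates through the tensor structure---is the cleaner path and is what the cited reference does.
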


\begin{lemma} \label{projection_err_p}
	Assume that $u(\x)\in H^{s+1}(\Omega_{h})$ with $ \Omega_h \subset \mathbb{R}^2$ and $ s\geq0 $. If $\sigma_l\neq1/2$, the projections $Q_{\sigma_l}$ satisfy
	\begin{equation*}\label{semi-discrete error3_1}
		\begin{aligned}	
			&\|\eta_l\|+h^{1/2}\|\eta_l\|_{L^{2}(\Gamma_{h})}\leq Ch^{min(k+1,s+1)}\|u\|_{H^{s+1}(\Omega_{h})}, \quad l=1,2,
		\end{aligned}
	\end{equation*}
	where $\eta_l=Q_{\sigma_l}u(\x)-u(\x)$. The positive constant $C$ is independent of $u(\x)$ and $h$. Here $\Gamma_{h}$ denotes the set of boundary points of all elements $ \Omega_{ij}\in\Omega_{h}$.
\end{lemma}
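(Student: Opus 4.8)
The plan is to reduce the two-dimensional statement to one space dimension using the tensor-product structure of $V_h^k$ and of $Q_{\sigma_l}$, then to combine an explicit unisolvence/stability analysis of a cyclically coupled one-dimensional projection with a scaling/Bramble--Hilbert estimate. I treat $Q_{\sigma_1}$; the case $Q_{\sigma_2}$ is identical with $x$ and $y$ interchanged.

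First I would observe that, since $Q^k(\Omega_{ij})=P^k(I_i)\otimes P^k(J_j)$, the defining relations \eqref{semi-discrete error1_1} split: in $y$ both the interior moment relation and the trace relation test against all of $P^k(J_j)$, so $Q_{\sigma_1}$ acts as the slice-wise $L^2(J_j)$-projection $P_y$ onto $P^k(J_j)$; in $x$, after expanding $P_yu$ in an $L^2(J_j)$-orthonormal basis of $P^k(J_j)$, each coefficient function is subject to the one-dimensional generalized Gauss--Radau projection $\pi_{\sigma_1}$ onto $P^k(I_i)$ defined by $(\pi_{\sigma_1}w,v)_{I_i}=(w,v)_{I_i}$ for all $v\in P^{k-1}(I_i)$ plus continuity of the weighted trace $\sigma_1(\pi_{\sigma_1}w)^-_{i+1/2}+(1-\sigma_1)(\pi_{\sigma_1}w)^+_{i+1/2}=\sigma_1 w^-_{i+1/2}+(1-\sigma_1)w^+_{i+1/2}$ at every node, with indices read cyclically by periodicity. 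Thus $Q_{\sigma_1}=\pi_{\sigma_1}P_y=P_y\pi_{\sigma_1}$, so $\eta_1=(I-P_y)u+(I-\pi_{\sigma_1})P_yu$; the first summand is the classical one-dimensional $L^2$-projection error in $y$, and the lemma reduces to well-posedness and an optimal error bound for $\pi_{\sigma_1}$.

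The central step is $\pi_{\sigma_1}$. Orthogonality to $P^{k-1}(I_i)$ forces $\pi_{\sigma_1}w|_{I_i}=\Pi^{k-1}_{I_i}w+c_iL_k^{(i)}$, where $\Pi^{k-1}_{I_i}$ is the local $L^2(I_i)$-projection onto $P^{k-1}(I_i)$, $L_k^{(i)}$ the degree-$k$ Legendre polynomial on $I_i$, and only the scalars $c_i$ are globally coupled. Inserting this into the weighted-trace relations gives the cyclic recursion $\sigma_1 c_i+(1-\sigma_1)(-1)^k c_{i+1}=d_i$, where $d_i$ is a fixed combination of nodal values of $w-\Pi^{k-1}w$. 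Its characteristic ratio $-(-1)^k\sigma_1/(1-\sigma_1)$ has modulus different from $1$ precisely because $\sigma_1\ne\tfrac12$, so the recursion is solvable around the cycle and, crucially, the solution operator is bounded independently of $N_x$, i.e. $|c_i|\le C\max_j|d_j|$. (This mesh-uniform geometric summability is the heart of the matter and explains why the central flux $\sigma_1=\tfrac12$ is excluded in the remark after \eqref{fluxes_1}; it is the content of \cite{Cheng-2017-86}.) Then $\|\pi_{\sigma_1}w-w\|_{L^2(I_i)}\le\|\Pi^{k-1}_{I_i}w-w\|_{L^2(I_i)}+|c_i|\,\|L_k^{(i)}\|_{L^2(I_i)}$; passing to a reference interval by $x\mapsto(x-x_{i-1/2})/h_i^x$ and using that $\pi_{\sigma_1}$ reproduces $P^k$, the Deny--Lions/Bramble--Hilbert lemma bounds both terms, and scaling back yields the optimal local rate $Ch^{\min(k+1,s+1)}$; the $1$D trace error follows from the inverse trace inequality on the polynomial part and the multiplicative trace inequality on the smooth part.

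Finally I would assemble the two-dimensional estimate: for the $L^2(\Omega_h)$ bound, use $\eta_1=(I-P_y)u+(I-\pi_{\sigma_1})P_yu$, Fubini, stability of $P_y$ in the relevant norms, and summation over $\Omega_{ij}\in\Omega_h$ with $u\in H^{s+1}(\Omega_h)$; for $\|\eta_1\|_{L^2(\Gamma_h)}$, write $\eta_1=(Q_{\sigma_1}u-\Pi_hu)+(\Pi_hu-u)$ with $\Pi_hu$ a tensor-product interpolant, apply $\|\phi\|_{L^2(\partial\Omega_{ij})}\le Ch^{-1/2}\|\phi\|_{\Omega_{ij}}$ to the polynomial difference and $\|\phi\|_{L^2(\partial\Omega_{ij})}^2\le C(h^{-1}\|\phi\|_{\Omega_{ij}}^2+\|\phi\|_{\Omega_{ij}}|\phi|_{H^1(\Omega_{ij})})$ to $\Pi_hu-u$, and combine with the $L^2$ bounds already proved; squaring and summing gives $h^{1/2}\|\eta_1\|_{L^2(\Gamma_h)}\le Ch^{\min(k+1,s+1)}\|u\|_{H^{s+1}(\Omega_h)}$, with $C$ depending only on $k$, on $\sigma_1$ through the stability of $\pi_{\sigma_1}$, and on the quasi-uniformity of $\Omega_h$, hence not on $u$ or $h$. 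The main obstacle is exactly this mesh-uniform stability of the coupled one-dimensional generalized Gauss--Radau projection when $\sigma_l\ne\tfrac12$; once it is granted, the rest is routine scaling and summation.
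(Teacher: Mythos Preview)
Your argument is correct, and in fact you have supplied considerably more than the paper does: the paper states this lemma without proof, simply attributing all three approximation properties (Lemmas~\ref{projection_err_u}--\ref{vector_projection}) to \cite{Cheng-2017-86}. Your reconstruction---factoring $Q_{\sigma_1}$ as the tensor product of the $L^2(J_j)$-projection in $y$ with the one-dimensional generalized Gauss--Radau projection $\pi_{\sigma_1}$ in $x$, then establishing mesh-uniform solvability of the cyclic Legendre-coefficient recursion via the fact that $|\sigma_1/(1-\sigma_1)|\neq 1$ when $\sigma_1\neq\tfrac12$, and finishing with Bramble--Hilbert plus inverse/multiplicative trace inequalities---is exactly the strategy of that reference. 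So there is no discrepancy to report; you have filled in the details the paper omits, and identified correctly that the mesh-uniform stability of $\pi_{\sigma_1}$ under periodic coupling is the one nontrivial ingredient.
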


\begin{lemma} \label{vector_projection}
	Assume that $u(\x)\in H^{k+2}(\Omega_{h})$ with $ \Omega_h \subset \mathbb{R}^2$. If $\sigma_1\neq1/2$ and $\sigma_2\neq1/2$, the projections $P_{\sigma_1,\sigma_2}$ satisfy
	\begin{equation}\label{semi-discrete error3_2}
		\begin{aligned}	
			&|(\eta,\nabla\cdot\textbf{v})-\langle \widehat{\eta},  \textbf{v}\cdot\textit{\textbf{n}}\rangle|\leq Ch^{k+1}\|u\|_{H^{k+2}(\Omega_{h})}\|\textbf{v}\|,\quad \forall\, \textbf{v}\in 	\textit{\textbf{V}}_{h}^{k}, \\
		\end{aligned}
	\end{equation}
	where $\eta=P_{\sigma_1,\sigma_2}u(\x)-u(\x)$ and $ \textit{\textbf{n}} $ is the outward unit normal vector to
	$ \partial\Omega_{ij} $. The positive constant $C$ is independent of $u(\x)$ and $h$. The definition of $\widehat{\eta}$ is the same as that of $\widehat{u_h}$ in \eqref{fluxes_1}.
	
\end{lemma}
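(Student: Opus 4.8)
The plan is to exploit the tensor--product structure of the generalized Gauss--Radau projection on the Cartesian periodic mesh. Following \cite{Cheng-2017-86}, on $\Omega_h$ the projection $P_{\sigma_1,\sigma_2}$ factors as the composition $\Pi^x_{\sigma_1}\circ\Pi^y_{\sigma_2}$ of the two one--dimensional generalized Gauss--Radau projections, which commute; here $\Pi^x_{\sigma_1}$ acts in the $x$--variable for each fixed $y$, maps onto $P^k(I_i)$ on every $I_i$, and is characterized by $(\Pi^x_{\sigma_1}f-f,r)_{I_i}=0$ for all $r\in P^{k-1}(I_i)$ together with the weighted nodal matching $(\Pi^x_{\sigma_1}f)^{(\sigma_1)}_{i+1/2}=f^{(\sigma_1)}_{i+1/2}$ at every node (both well posed precisely because $\sigma_1\neq 1/2$), with the usual estimate $\|(I-\Pi^x_{\sigma_1})w\|\le Ch^{k+1}\|w\|_{H^{k+1}(\Omega_h)}$; the operator $\Pi^y_{\sigma_2}$ is symmetric. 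Writing $\textbf{v}=(v_1,v_2)^{T}$ and performing the standard discontinuous--Galerkin reindexing of the boundary sum (allowed by periodicity), I split $(\eta,\nabla\cdot\textbf{v})-\langle\widehat{\eta},\textbf{v}\cdot\textit{\textbf{n}}\rangle=T_x+T_y$, where $T_x=(\eta,\partial_x v_1)+\sum_{i,j}\int_{J_j}\eta^{(\sigma_1,y)}_{i+1/2,y}\,[\![v_1]\!]_{i+1/2,y}\,dy$ collects the vertical--edge contributions ($[\![\cdot]\!]$ being the jump across the edge) and $T_y$ is its mirror image in the $y$--direction; by symmetry it suffices to estimate $T_x$.

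For $T_x$ I decompose $\eta=\psi+\rho$ with $\psi:=u-\Pi^y_{\sigma_2}u$ and $\rho:=\Pi^y_{\sigma_2}u-\Pi^x_{\sigma_1}\Pi^y_{\sigma_2}u$, so that $\psi$ is continuous in $x$ while $\rho$ carries every jump across the vertical edges. Expanding $\Pi^y_{\sigma_2}u$ and $v_1$ in the Legendre basis $\{\widetilde{L}^{\,j}_{l}\}_{l=0}^{k}$ of $P^k(J_j)$ in the $y$--variable, one gets $\rho=\sum_{l}\big(g_l-\Pi^x_{\sigma_1}g_l\big)(x)\,\widetilde{L}^{\,j}_{l}(y)$ with each coefficient $g_l$ continuous in $x$, and $\partial_x v_1=\sum_{l}\big(v_1^{(l)}\big)'(x)\,\widetilde{L}^{\,j}_{l}(y)$ with $\big(v_1^{(l)}\big)'\in P^{k-1}(I_i)$. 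The interior moment identity of $\Pi^x_{\sigma_1}$ forces $\big(g_l-\Pi^x_{\sigma_1}g_l,(v_1^{(l)})'\big)_{I_i}=0$ for every $l$, hence $(\rho,\partial_x v_1)=0$; the weighted nodal matching of $\Pi^x_{\sigma_1}$ applied to the continuous $g_l$ forces $\big(g_l-\Pi^x_{\sigma_1}g_l\big)^{(\sigma_1)}_{i+1/2}=0$, hence $\rho^{(\sigma_1,y)}_{i+1/2,y}=0$. Thus only the $\psi$--part survives: $T_x=(\psi,\partial_x v_1)+\sum_{i,j}\int_{J_j}\psi_{i+1/2,y}\,[\![v_1]\!]_{i+1/2,y}\,dy$, where $\psi$ is now single valued across every vertical edge.

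Integrating $(\psi,\partial_x v_1)$ by parts in $x$ element by element and reindexing the interface terms by periodicity yields $(\psi,\partial_x v_1)=-\sum_{i,j}\int_{J_j}\psi_{i+1/2,y}\,[\![v_1]\!]_{i+1/2,y}\,dy-(\partial_x\psi,v_1)$; substituting this back, the edge sums cancel and $T_x=-(\partial_x\psi,v_1)$. Since $\Pi^y_{\sigma_2}$ acts only in $y$ it commutes with $\partial_x$, so $\partial_x\psi=\partial_x u-\Pi^y_{\sigma_2}(\partial_x u)=(I-\Pi^y_{\sigma_2})(\partial_x u)$ is exactly the one--dimensional Gauss--Radau projection error of $\partial_x u$ in the $y$--direction. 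Applying the $1$D estimate with $w=\partial_x u$ gives $|T_x|\le\|(I-\Pi^y_{\sigma_2})(\partial_x u)\|\,\|v_1\|\le Ch^{k+1}\|\partial_x u\|_{H^{k+1}(\Omega_h)}\|v_1\|\le Ch^{k+1}\|u\|_{H^{k+2}(\Omega_h)}\|v_1\|$ --- and this is precisely the point at which the extra regularity $u\in H^{k+2}(\Omega_h)$ is consumed. The mirror argument gives $|T_y|\le Ch^{k+1}\|u\|_{H^{k+2}(\Omega_h)}\|v_2\|$, and adding the two estimates together with $\|v_1\|+\|v_2\|\le C\|\textbf{v}\|$ completes the proof. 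I expect the genuine obstacle to lie in the opening structural step --- verifying on the periodic Cartesian mesh that the truly two--dimensional projection with general weights $\sigma_1,\sigma_2\neq 1/2$ really does split as the composition of the (globally coupled, hence non--standard) one--dimensional generalized Gauss--Radau projections, and that this composition inherits the interior moment and weighted nodal matching identities used above; once this is secured, the element--wise volume/edge cancellations and the commutation $\partial_x\Pi^y_{\sigma_2}=\Pi^y_{\sigma_2}\partial_x$ reduce everything to the classical one--dimensional approximation estimate.
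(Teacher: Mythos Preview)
Your argument is correct and is precisely the superconvergence mechanism behind \cite[Lemma~3.6]{Cheng-2017-86}, which is all the paper invokes for its own proof; the tensor-product splitting $P_{\sigma_1,\sigma_2}=\Pi^x_{\sigma_1}\Pi^y_{\sigma_2}$, the $\psi+\rho$ decomposition, the vanishing of the $\rho$-contributions via the 1D moment and weighted nodal conditions, and the reduction of the $\psi$-contribution to $(I-\Pi^y_{\sigma_2})\partial_x u$ are exactly the steps carried out there. One cosmetic slip: with the paper's convention $\eta=P_{\sigma_1,\sigma_2}u-u$ your $\psi+\rho$ actually equals $-\eta$, not $\eta$, but this is immaterial since only $|T_x|$ is estimated.
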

\begin{proof}
Following \cite[Lemma 3.6]{Cheng-2017-86}, it is easy to get \eqref{semi-discrete error3_2}.
\end{proof}

With the above preliminary knowledge, we next numerically analyze the convergence of the semi-discrete scheme.

\begin{theorem}\label{Th3_2}
	Let $u_{h}$ be the numerical solution of the semi-discrete LDG scheme \eqref{semi-discrete} and $u(t,\x) \in H^{1}((0,T];H^{s+1}(\Omega_{h}) \cap H^{2}(\Omega_{h}))$ the exact solution of Eq.~\eqref{equivalent} with $s\geq k $. Then there exists a positive constant $C$, being independent of $h$ and $u_{h}$, such that
	\begin{equation*}\label{semi-discrete error4}
		\int_{0}^{T}\|u(t,\x)-u_{h}(t,\x)\|^{2}dt\leq Ch^{2k+2}.
	\end{equation*}
\end{theorem}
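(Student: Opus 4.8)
The plan is to run the usual LDG energy estimate, measuring the spatial error by the generalized Gauss-Radau projections and treating the fractional substantial derivative via Lemmas~\ref{L2-1} and \ref{L2-2} exactly as in the proof of Theorem~\ref{Th3_1}. First I would split the errors as $u-u_h=\xi-\eta$ and $\textit{\textbf{p}}-\textit{\textbf{p}}_h=\xi_p-\eta_p$, with $\eta=P_{\sigma_1,\sigma_2}u-u$, $\xi=P_{\sigma_1,\sigma_2}u-u_h\in V_h^k$ and $\eta_p=Q_{\sigma_1,\sigma_2}\textit{\textbf{p}}-\textit{\textbf{p}}$, $\xi_p=Q_{\sigma_1,\sigma_2}\textit{\textbf{p}}-\textit{\textbf{p}}_h\in\textit{\textbf{V}}_h^k$. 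Since the exact solution is smooth it satisfies \eqref{Variational} with the single-valued traces $\widehat{\textit{\textbf{p}}}=\textit{\textbf{p}}$, $\widehat{u}=u$; subtracting the scheme \eqref{semi-discrete} and inserting the splitting gives, for every $(v,\textit{\textbf{w}})\in V_h^k\times\textit{\textbf{V}}_h^k$, the error equation
\[
({}_0^C D_t^{\alpha,\kappa(\x)}\xi,v)+B(\xi,v;\xi_p,\textit{\textbf{w}})+(\xi_p,\textit{\textbf{w}})=({}_0^C D_t^{\alpha,\kappa(\x)}\eta,v)+\mathcal{R}(\textit{\textbf{w}}),
\]
where $B$ is the bilinear form \eqref{semi-discrete Stability1} and $\mathcal{R}(\textit{\textbf{w}})=\big[(\eta,\nabla\cdot\textit{\textbf{w}})-\langle\widehat{\eta},\textit{\textbf{w}}\cdot\textit{\textbf{n}}\rangle\big]+(\eta_p,\textit{\textbf{w}})$; the projection-error terms coupled to $\nabla v$ drop out thanks to the defining relations of $Q_{\sigma_1,\sigma_2}$, which is exactly what avoids an inverse inequality and keeps the rate optimal. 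By Lemmas~\ref{projection_err_p} and \ref{vector_projection}, $|\mathcal{R}(\xi_p)|\le Ch^{k+1}\|\xi_p\|$.

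Taking $v=\xi$, $\textit{\textbf{w}}=\xi_p$, using Lemma~\ref{bilinear} to annihilate $B(\xi,\xi;\xi_p,\xi_p)$, and a Young inequality that absorbs $\|\xi_p\|^2$, there remains, pointwise in $t$,
\[
({}_0^C D_t^{\alpha,\kappa(\x)}\xi,\xi)\le({}_0^C D_t^{\alpha,\kappa(\x)}\eta,\xi)+Ch^{2k+2}.
\]
I would then integrate over $(0,T]$, write ${}_0^C D_t^{\alpha,\kappa(\x)}w={}_0^R D_t^{\alpha,\kappa(\x)}w-\omega_{1-\alpha}(t)e^{-\kappa(\x)t}w(0,\x)$ as in \eqref{semi-discrete Stability6-1}, and bound $\int_0^T({}_0^R D_t^{\alpha,\kappa(\x)}\xi,\xi)\,dt$ from below by $C_\alpha T^{-\alpha}\int_0^T\|\xi\|^2\,dt$ using Lemma~\ref{L2-1} (applied for each $\x$ and integrated over $\Omega$). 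The initial contribution $\int_0^T(\omega_{1-\alpha}(t)e^{-\kappa(\x)t}\xi(0,\x),\xi)\,dt$ is estimated by Lemma~\ref{L2-2} exactly as $\uppercase\expandafter{\romannumeral2}$ in \eqref{semi-discrete Stability6-2}, the only new point being $\|\xi(0,\x)\|\le\|P_{\sigma_1,\sigma_2}u_0-u_0\|+\|u_0-u_h(0,\x)\|\le Ch^{k+1}$, since $u_h(0,\x)$ is the $L^2$ projection of $u_0$.

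It remains to absorb $\int_0^T({}_0^C D_t^{\alpha,\kappa(\x)}\eta,\xi)\,dt$. For this I would use ${}_0^C D_t^{\alpha,\kappa(\x)}\eta=e^{-\kappa(\x)t}\,{}_0 I_t^{1-\alpha}\!\big[e^{\kappa(\x)t}(\kappa(\x)\eta+\partial_t\eta)\big]$, noting that $P_{\sigma_1,\sigma_2}$ commutes with $\partial_t$, so $\partial_t\eta$ is the projection error of $\partial_t u\in L^2(0,T;H^{s+1}(\Omega_h))$; then \eqref{assumptions2}, Young's convolution inequality (the kernel $t^{-\alpha}$ being integrable on $(0,T)$ since $0<\alpha<1$) and Lemma~\ref{projection_err_u} give $\|{}_0^C D_t^{\alpha,\kappa(\x)}\eta\|_{L^2(0,T;L^2(\Omega))}\le Ch^{k+1}$, and a Cauchy-Schwarz plus Young inequality absorbs the term into $C_\alpha T^{-\alpha}\int_0^T\|\xi\|^2\,dt$. (Alternatively, Lemma~\ref{L2-2} can be applied to this term directly, as in the stability proof.) Collecting everything yields $\int_0^T\|\xi(t,\x)\|^2\,dt\le Ch^{2k+2}$, and the triangle inequality together with $\int_0^T\|\eta(t,\x)\|^2\,dt\le Ch^{2k+2}\int_0^T\|u(t,\x)\|_{H^{s+1}(\Omega_h)}^2\,dt$ (Lemma~\ref{projection_err_u}, using $s\ge k$) finishes the proof.

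The hard part is the cross term $\int_0^T({}_0^C D_t^{\alpha,\kappa(\x)}\eta,\xi)\,dt$ and, underneath it, extracting coercivity from the Riemann-Liouville substantial form when $\kappa(\x)$ may be negative: the substantial derivative is nonlocal in time, so no naive pointwise-in-$t$ estimate works, and the whole role of Lemmas~\ref{L2-1} and \ref{L2-2} (together with the Fourier transform and Cauchy integral theorem behind them) is to supply the coercivity and the ``fractional Cauchy-Schwarz'' inequality that let one trade this term against $\int_0^T({}_0^R D_t^{\alpha,\kappa(\x)}\xi,\xi)\,dt$. Lining up the exponential weights and the orders of the substantial integrals when applying Lemma~\ref{L2-2}, and checking that the generalized Gauss-Radau projections make $\mathcal{R}$ free of $\nabla v$-couplings, is the delicate bookkeeping; the Galerkin-orthogonality accounting and the Young-inequality absorptions are routine.
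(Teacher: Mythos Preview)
Your proposal is correct and follows the same overall skeleton as the paper: derive the error equation, split via the generalized Gauss--Radau projections (the auxiliary-variable projection should be $Q_{1-\sigma_1,1-\sigma_2}$ to match the $\widehat{\textit{\textbf{p}}_h}$-flux, but you clearly intend this), kill $B(\xi,\xi;\xi_p,\xi_p)$ by Lemma~\ref{bilinear}, kill the $(\eta_p,\nabla v)$- and flux-terms by the projection definitions, extract time-coercivity from Lemma~\ref{L2-1}, and finish with the triangle inequality.

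Where you genuinely diverge from the paper is in the cross term $\int_0^T({}_0^C D_t^{\alpha,\kappa(\x)}\eta,\xi)\,dt$. The paper first multiplies \eqref{semi-discrete error14} by the weight $e^{-2C_\kappa t}$, which turns the substantial exponent into $\kappa(\x)+C_\kappa>0$; it then applies Lemma~\ref{L2-2} a second time to this cross term (their $\uppercase\expandafter{\romannumeral4}$), reducing matters to a term $\uppercase\expandafter{\romannumeral6}=\int_0^T({}_0^R D_t^{\alpha,\kappa(\x)}\eta,e^{-2C_\kappa t}\eta)\,dt$ that is finally estimated by an integration by parts in $t$. Your route is more direct: you exploit the representation ${}_0^C D_t^{\alpha,\kappa(\x)}\eta=e^{-\kappa(\x)t}\,{}_0 I_t^{1-\alpha}\!\big[e^{\kappa(\x)t}(\kappa(\x)\eta+\partial_t\eta)\big]$, use $t^{-\alpha}\in L^1(0,T)$ and Young's convolution inequality together with \eqref{assumptions2} and Lemma~\ref{projection_err_u} to get $\|{}_0^C D_t^{\alpha,\kappa(\x)}\eta\|_{L^2(0,T;L^2(\Omega))}\le Ch^{k+1}$, and absorb by Cauchy--Schwarz/Young against $C_\alpha T^{-\alpha}\int_0^T\|\xi\|^2\,dt$. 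This avoids the exponential weight and the integration-by-parts step entirely; the paper's approach, on the other hand, stays within the ``fractional Cauchy--Schwarz'' framework of Lemma~\ref{L2-2} throughout. One further point: you bound $\|\xi(0,\x)\|\le Ch^{k+1}$ via the triangle inequality, which is correct given that $u_h(0,\x)$ is the $L^2$-projection of $u_0$; the paper instead asserts $\zeta_u(0,\x)=0$, which strictly requires $u_h(0,\x)=P_{\sigma_1,\sigma_2}u_0$ rather than the $L^2$-projection stated in \eqref{semi-discrete}.
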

\begin{proof}
Let $(u,\textit{\textbf{p}})$ be the exact solution of Eq.~\eqref{Variational} and $(u_{h},\textit{\textbf{p}}_{h})$ the numerical solution of the scheme \eqref{semi-discrete}. Summing up \eqref{Variational} and \eqref{semi-discrete} over all $j$ leads to
	\begin{equation}\label{semi-discrete error5}
		\Big({}_0^C D_t^{\alpha,\kappa(\x)}u_{h},v\Big)+B(u_{h},v;\textit{\textbf{p}}_{h},\textit{\textbf{w}})+(\textit{\textbf{p}}_{h},\textit{\textbf{w}})=0,
	\end{equation}
	and
	\begin{equation}\label{semi-discrete error6}
		\Big({}_0^C D_t^{\alpha,\kappa(\x)}u,v\Big)+B(u,v;\textit{\textbf{p}},\textit{\textbf{w}})+(\textit{\textbf{p}},\textit{\textbf{w}})=0,
	\end{equation}
	where the bilinear form $B(\cdot,\cdot;\cdot,\cdot)$ is defined in \eqref{semi-discrete Stability1}.
	Subtracting \eqref{semi-discrete error5} from \eqref{semi-discrete error6} gets the error equation
	\begin{equation*}\label{semi-discrete error7}
		({}_0^C D_t^{\alpha,\kappa(\x)}e_{u},v)+B(e_{u},v;e_{\textit{\textbf{p}}},\textit{\textbf{w}})+(e_{\textit{\textbf{p}}},\textit{\textbf{w}})=0,
	\end{equation*}
	where $e_{u}=u-u_{h}$, $e_{\textit{\textbf{p}}}=\textit{\textbf{p}}-\textit{\textbf{p}}_{h}$.
	
	By convention, let
	\begin{equation*}\label{semi-discrete error8}
		\begin{aligned}
			&\zeta_{u}=P_{\sigma_1,\sigma_2}e_{u}, \quad \eta_{u}=P_{\sigma_1,\sigma_2}u-u, \\
			& \zeta_{\textit{\textbf{p}}}=Q_{1-\sigma_1,1-\sigma_2}e_{\textit{\textbf{p}}}, \quad \eta_{\textit{\textbf{p}}}=Q_{1-\sigma_1,1-\sigma_2}\textit{\textbf{p}}-\textit{\textbf{p}}.
		\end{aligned}
	\end{equation*}
	Then the errors can be divided into
	$e_{u}=\zeta_{u}-\eta_{u}$ and $e_{\textit{\textbf{p}}}=\zeta_{\textit{\textbf{p}}}-\eta_{\textit{\textbf{p}}}$, which implies that
	\begin{equation*}\label{semi-discrete error9}
		\begin{split}
			\Big({}_0^C D_t^{\alpha,\kappa(\x)}\zeta_{u},v\Big)+(\zeta_{\textit{\textbf{p}}},\textit{\textbf{w}})&=\Big({}_0^C D_t^{\alpha,\kappa(\x)}\eta_{u},v\Big)+(\eta_{\textit{\textbf{p}}},\textit{\textbf{w}})\\
			&~~~~+B(\eta_{u},v;\eta_{\textit{\textbf{p}}},\textit{\textbf{w}})-B(\zeta_{u},v;\zeta_{\textit{\textbf{p}}},\textit{\textbf{w}}).\\
		\end{split}
	\end{equation*}
Setting the test functions $v=\zeta_{u}$, $\textit{\textbf{w}}=\zeta_{\textit{\textbf{p}}}$ in the above equation leads to
	\begin{equation*}\label{semi-discrete error9_1}
		\begin{split}
			\Big({}_0^C D_t^{\alpha,\kappa(\x)}\zeta_{u},\zeta_{u}\Big)+(\zeta_{\textit{\textbf{p}}},\zeta_{\textit{\textbf{p}}})&=\Big({}_0^C D_t^{\alpha,\kappa(\x)}\eta_{u},\zeta_{u}\Big)+(\eta_{\textit{\textbf{p}}},\zeta_{\textit{\textbf{p}}})\\
			&~~~~+B(\eta_{u},\zeta_{u};\eta_{\textit{\textbf{p}}},\zeta_{\textit{\textbf{p}}})-B(\zeta_{u},\zeta_{u};\zeta_{\textit{\textbf{p}}},\zeta_{\textit{\textbf{p}}}).\\
		\end{split}
	\end{equation*}
	

	Next, we analyze the two bilinear forms in the above equation. By using \eqref{fluxes_1}, \eqref{semi-discrete error1_1}, and Lemma\,\ref{vector_projection}, we have
	\begin{equation*}\label{semi-discrete error11}
		\begin{aligned}
			&(\eta_{\textit{\textbf{p}}},\nabla \zeta_{u})= \langle\widehat{\eta_{\textit{\textbf{p}}}}\cdot\textit{\textbf{n}},\zeta_{u}\rangle=0,\\
			&|(\eta_{u},\nabla\cdot\zeta_{\textit{\textbf{p}}})-\langle \widehat{\eta_{u}},\zeta_{\textit{\textbf{p}}}\cdot\textit{\textbf{n}}\rangle|\leq Ch^{k+1}\|\zeta_{\textit{\textbf{p}}}\|,\\
		\end{aligned}
	\end{equation*}	
	and as a result,
	\begin{equation*}\label{semi-discrete error12}
		|B(\eta_{u},\zeta_{u};\eta_{\textit{\textbf{p}}},\zeta_{\textit{\textbf{p}}})|\leq Ch^{k+1}\|\zeta_{\textit{\textbf{p}}}\|.
	\end{equation*}
From Lemma\,\ref{bilinear}, there exists $B(\zeta_{u},\zeta_{u};\zeta_{\textit{\textbf{p}}},\zeta_{\textit{\textbf{p}}})=0$. Then, one can get
	\begin{equation}\label{semi-discrete error14}
		\Big({}_0^C D_t^{\alpha,\kappa(\x)}\zeta_{u},\zeta_{u}\Big)+\|\zeta_{\textit{\textbf{p}}}\|^{2}\leq\Big({}_0^C D_t^{\alpha,\kappa(\x)}\eta_{u},\zeta_{u}\Big)+(\eta_{\textit{\textbf{p}}},\zeta_{\textit{\textbf{p}}})+Ch^{k+1}\|\zeta_{\textit{\textbf{p}}}\|.
	\end{equation}
Now, multiplying $ e^{-2C_{\kappa}t} $ on both sides of the inequality \eqref{semi-discrete error14}, we can do the estimate further for the right-hand side of \eqref{semi-discrete error14} by employing the Cauchy-Schwarz inequality
	\begin{equation}\label{semi-discrete error14-1}
		\Big({}_0^C D_t^{\alpha,\kappa(\x)}\zeta_{u},e^{-2C_{\kappa}t}\zeta_{u}\Big)\leq\Big({}_0^C D_t^{\alpha,\kappa(\x)}\eta_{u},e^{-2C_{\kappa}t}\zeta_{u}\Big)+\frac{1}{2}\|\eta_{\textit{\textbf{p}}}\|^{2}+Ch^{2k+2}.
	\end{equation}
	Similar to \eqref{semi-discrete Stability6-1}, there exist
	\begin{equation}\label{semi-discrete error15-1}
		\int_{0}^{T}\Big({}_0^C D_t^{\alpha,\kappa(\x)}\zeta_{u},e^{-2C_{\kappa}t}\zeta_{u}\Big)dt=\int_{0}^{T}\Big({}_0^R D_t^{\alpha,\kappa(\x)}\zeta_{u},e^{-2C_{\kappa}t}\zeta_{u}\Big)dt
	\end{equation}
	and
	\begin{equation}\label{semi-discrete error15-2}
		\begin{split}
			\int_{0}^{T}\Big({}_0^C D_t^{\alpha,\kappa(\x)}\eta_{u},e^{-2C_{\kappa}t}\zeta_{u}\Big)dt&=\int_{0}^{T}\Big({}_0^RD_t^{\alpha,\kappa(\x)}\eta_{u},e^{-2C_{\kappa}t}\zeta_{u}\Big)dt\\
			&~~~-\int_{0}^{T}\Big(\omega_{1-\alpha}(t)\eta_{u}(0,\x),\zeta_{u}e^{-(\kappa(\x)+2C_{\kappa})t}\Big)dt	,\\
		\end{split}
	\end{equation}
	where $ \zeta_{u}(0,\x)=0 $ is used.
	
	Denote each line of the right-hand sides of \eqref{semi-discrete error15-1} and \eqref{semi-discrete error15-2} by $\uppercase\expandafter{\romannumeral3} $, $ \uppercase\expandafter{\romannumeral4} $, and $ \uppercase\expandafter{\romannumeral5}$, respectively. The analyses of $\uppercase\expandafter{\romannumeral3}$- $\uppercase\expandafter{\romannumeral5}$ mainly depend on Lemma\,\ref{L2-1} and Lemma\,\ref{L2-2}. Setting functions $ v(t,\x)=e^{-C_{\kappa}t}\zeta_{u} $, $u(t,\x)=e^{\kappa(\x)t}{}_0^{R} D_t^{\alpha,\kappa(\x)}\eta_{u} $, and $ \phi(\x)=\kappa(\x)+C_{\kappa}>0$ in  Lemma\,\ref{L2-2}, one can get
	\begin{equation}\label{semi-discrete error15-3}
		\uppercase\expandafter{\romannumeral4}
		\leq\sec\Big(\dfrac{\pi\alpha}{2}\Big)\uppercase\expandafter{\romannumeral3} ^{1/2}\cdot \uppercase\expandafter{\romannumeral6} ^{1/2}\leq\dfrac{1}{4}\uppercase\expandafter{\romannumeral3}+\sec^{2}\Big(\dfrac{\pi\alpha}{2}\Big)\cdot \uppercase\expandafter{\romannumeral6},
	\end{equation}
	where
	\begin{equation*}\label{semi-discrete error15-3_1}
		\uppercase\expandafter{\romannumeral6}=\int_{0}^{T}\Big({}_0^{R} D_t^{\alpha,\kappa(\x)}\eta_{u},e^{-2C_{\kappa}t}\eta_{u}\Big)dt.
	\end{equation*}
	Similarly, taking functions $ v(t,\x)=e^{-C_{\kappa}t}\zeta_{u} $, $u(t,\x)=\omega_{1-\alpha}(t)\eta_{u}(0,\x)  $, and $ \phi(\x)=\kappa(\x)+C_{\kappa}>0$ in Lemma\,\ref{L2-2}, one can obtain
	\begin{equation}\label{semi-discrete error15-4}
		\begin{split}
			\uppercase\expandafter{\romannumeral5}
			&\leq\sec\Big(\dfrac{\pi\alpha}{2}\Big)\uppercase\expandafter{\romannumeral3}^{1/2}\cdot\Big(\int_{0}^{T}\omega_{1-\alpha}(t)\cdot\|e^{-(\kappa(\x)+C_{\kappa})t}\eta_{u}(0,\x)\|^{2}dt\Big)^{1/2}\\
			&\leq\dfrac{1}{4}\uppercase\expandafter{\romannumeral3}+\sec^{2}\Big(\dfrac{\pi\alpha}{2}\Big)\cdot\dfrac{T^{1-\alpha}}{\Gamma(2-\alpha)}\|\eta_{u}(0,\x)\|^{2}.\\
		\end{split}	
	\end{equation}
Substituting \eqref{semi-discrete error15-1}-\eqref{semi-discrete error15-4} into \eqref{semi-discrete error14-1} leads to
		\begin{equation}\label{semi-discrete error15-5}
		\uppercase\expandafter{\romannumeral3}\leq2\sec^{2}\Big(\dfrac{\pi\alpha}{2}\Big)\cdot\Big(\uppercase\expandafter{\romannumeral6}+\dfrac{T^{1-\alpha}}{\Gamma(2-\alpha)}\|\eta_{u}(0,\x)\|^{2}\Big)+\int_{0}^{T}\|\eta_{\textit{\textbf{p}}}\|^{2}dt+Ch^{2k+2}.
	\end{equation}
In addition, using the properties of projections in Lemma\,\ref{projection_err_u} and Lemma\,\ref{projection_err_p} yields
	\begin{equation*}\label{semi-discrete error15-5-1}
		\uppercase\expandafter{\romannumeral3}\leq 2\sec^{2}\Big(\dfrac{\pi\alpha}{2}\Big)\uppercase\expandafter{\romannumeral6}+Ch^{2k+2}.
	\end{equation*}
	
	In what follows, we will analyze $ \uppercase\expandafter{\romannumeral6} $.
	By integrating $ \uppercase\expandafter{\romannumeral6} $ by parts with respect to $ t $, one can get
	\begin{equation*}\label{semi-discrete error15-6}
		\begin{split}
			\uppercase\expandafter{\romannumeral6}&=\int_{0}^{T}\frac{(T-\tau)^{-\alpha}}{\Gamma(1-\alpha)}\Big(e^{-(\kappa(\x)+2C_{\kappa})T} \eta_{u}(T,\x),e^{\kappa(\x)\tau}\eta_{u}(\tau,\x)  \Big)d\tau\\
			&~~~~-\int_{0}^{T}\int_{0}^{t}\frac{(t-\tau)^{-\alpha}}{\Gamma(1-\alpha)}\Big(e^{\kappa(\x)\tau}\eta_{u}(\tau,\x) ,\partial_{t}\Big(e^{-(\kappa(\x)+2C_{\kappa})t} \eta_{u}(t,\x)\Big) \Big)d\tau dt.\\
		\end{split}	
	\end{equation*}
By using the Cauchy-Schwarz inequality, the boundedness of $\kappa(\x)$, and Lemma\,\ref{projection_err_u}, one can obtain
	$\uppercase\expandafter{\romannumeral6}\leq Ch^{2k+2}$. Hence
	$\uppercase\expandafter{\romannumeral3}\leq Ch^{2k+2}$.
	Moreover,  taking function $u(t,\x)=e^{-C_{\kappa}t}\zeta_{u}(t,\x)$ in Lemma\,\ref{L2-1}, there exists 
	\begin{equation*}\label{semi-discrete error15-9}
		\begin{split}
			\uppercase\expandafter{\romannumeral3}&\geq C_{\alpha}T^{-\alpha}\int_{0}^{T}\|e^{-C_{\kappa}t}\zeta_{u}(t,\x)\|^{2}dt\\
			&\geq C_{\alpha}C_{\min}^{2}T^{-\alpha}\int_{0}^{T}\|\zeta_{u}(t,\x)\|^{2}dt.\\
		\end{split}	
	\end{equation*}
Thus
	\begin{equation*}\label{semi-discrete error15-10}
		\int_{0}^{T}\|\zeta_{u}(t,\x)\|^{2}dt\leq Ch^{2k+2}.
	\end{equation*}
Then the proof can be completed after using the triangle inequality.
	
\end{proof}


\section{The fully discrete LDG scheme}
\label{Sec4}

In this section, we first introduce the fully discrete scheme for Eq.~\eqref{equivalent}. Then, the stability analysis of the fully discrete scheme is proposed. Based on the obtained error estimates of semi-discrete scheme, we provide the error estimates of the fully discrete scheme.

\subsection{ Construction of the fully discrete scheme}
\label{Sec4_1}

Let $M$ be a positive integer. Assume that  $t_{n}=(n/M)^{\gamma}T$ for $n = 0,1,\cdots, M$, is the mesh point of the interval $[0,T]$, where the constant mesh grading $\gamma>1$. In particular, when $\gamma=1$, graded mesh is uniform one. Set step size $\tau_{n}=t_{n}-t_{n-1}$ and the maximum step size $\tau:=\max\limits_{1\leq i \leq M}\tau_{i}$. An approximation to the time fractional derivative $ {}_0^C D_t^{\alpha,\kappa(\x)}u(t,\x) $ in Eq.~\eqref{equivalent}, called the $ L1 $ scheme, can be obtained by a simple quadrature formula as

\begin{equation}\label{fully-discrete1}
	\begin{split}
		{}_0^C D_t^{\alpha,\kappa(\x)}u(t_{n},\x)&=\frac{e^{-\kappa(\x)t_n}}{\Gamma(1-\alpha)}\sum_{i=1}^{n}\int_{t_{i-1}}^{t_i}\frac{\partial}{\partial_{s}}[e^{\kappa(\x)s}u(s,\x)]\cdot\frac{ds}{(t_n-s)^\alpha}\\
		&=\frac{e^{-\kappa(\x)t_n}}{\Gamma(1-\alpha)}\sum_{i=1}^{n}\frac{e^{\kappa(\x)t_{i}}u(t_{i},\x)-e^{\kappa(\x)t_{i-1}}u(t_{i-1},\x)}{\tau}\\
		&~~~~\cdot\int_{t_{i-1}}^{t_i}\frac{ds}{(t_n-s)^\alpha}+\Upsilon^{n}(\x)\\
		&=\frac{e^{-\kappa(\x)t_n}}{\Gamma(2-\alpha)}\sum_{i=1}^{n}\frac{e^{\kappa(\x)t_{i}}u(t_{i},\x)-e^{\kappa(\x)t_{i-1}}u(t_{i-1},\x)}{\tau}\\
		&~~~~\cdot[(t_n-t_{i-1})^{1-\alpha}-(t_n-t_{i})^{1-\alpha}]+\Upsilon^{n}(\x),\\
		&=:{}^CD_M^{\alpha,\kappa(\x)}u(t_{n},\x)+\Upsilon^{n}(\x)\\
	\end{split}
\end{equation}
where $\Upsilon^{n}(\x)$ is  temporal truncation error of the $ L1 $ scheme of $ {}_0^C D_t^{\alpha,\kappa(\x)}u(t,\x) $  at $t=t_n$. By simple calculations, one can get
\begin{equation}\label{fully-discrete1-3}
	\begin{split}
		{}^C D_M^{\alpha,\kappa(\x)}u(t_{n},\x)&:=A_{0}^{n}u(t_n,\x)-A_{n-1}^{n}e^{-\kappa(\x)t_n}u(t_{0},\x)\\
		&~~~~+\sum_{i=1}^{n-1}(A_{i}^{n}-A_{i-1}^{n})e^{-\kappa(\x)(t_{n}-t_{n-i})}u(t_{n-i},\x),
	\end{split}
\end{equation}
where
\begin{equation}\label{fully-discrete1-3-1}
	A_{i-1}^{n}=\frac{(t_{n}-t_{n-i})^{1-\alpha}-(t_{n}-t_{n-i+1})^{1-\alpha}}{\Gamma(2-\alpha)\tau_{n-i+1}},\quad i=1,2,\cdots,n.
\end{equation}

Using the mean value theorem, one can easily prove that
\begin{equation}\label{fully-discrete1-4}
	A_{0}^{n}\geq A_{1}^{n}\geq\cdots\geq A_{n-1}^{n}>0,\quad 1\leq n\leq M.
\end{equation}
Based on the spatial semi-discrete LDG scheme \eqref{semi-discrete} and \eqref{fully-discrete1-3}, one can obtain
\begin{equation}\label{fully-discrete2}
	\left\{
	\begin{aligned}
		& ({}^C D_M^{\alpha,\kappa(\x)}u_h(t_{n},\x),v)_{\Omega_{ij}}=-(\Upsilon^{n}(\x),v)_{\Omega_{ij}}-(\textit{\textbf{p}}_h(t_n,\x),\nabla v)_{\Omega_{ij}}\\
		&~~~~~~~~~~~~~~~~~~~~~~~~~~~~~~~~~+\langle\widehat{\textit{\textbf{p}}_h}(t_{n},\x)\cdot\textit{\textbf{n}}, v\rangle_{\partial\Omega_{ij}}, \\
		&(\textit{\textbf{p}}_h(t_{n},\x),\textit{\textbf{w}})_{\Omega_{ij}}+(u_h(t_{n},\x),\nabla\cdot\textit{\textbf{w}})_{\Omega_{ij}}-\langle \widehat{u_h}(t_{n},\x),\textit{\textbf{w}}\cdot\textit{\textbf{n}}\rangle_{\partial\Omega_{ij}}=0,\\
		&(u_h(0),v)_{\Omega_{ij}}=(u_{0}(\x),v)_{\Omega_{ij}},
	\end{aligned}\right.
\end{equation}
where $ (v,\textit{\textbf{w}})\in V_{h}^{k}\times \textit{\textbf{V}}_{h}^{k} $ for $ \Omega_{ij}\in\Omega_{h}$.
Denote the approximation of $(u(t_{n},\x),\textit{\textbf{p}}(t_{n},\x))$ by $(u_{h}^{n},\textit{\textbf{p}}_{h}^{n})\in V_{h}^{k}\times \textit{\textbf{V}}_{h}^{k}$. With the semi-discrete scheme \eqref{semi-discrete}, there exists the fully discrete LDG scheme:


\begin{equation}\label{fully-discrete3}
	\left\{
	\begin{aligned}
		& ({}^C D_M^{\alpha,\kappa(\x)}u_h^n,v)_{\Omega_{ij}}+(\textit{\textbf{p}}_h^n,\nabla v)_{\Omega_{ij}}-\langle\widehat{\textit{\textbf{p}}_h^n}\cdot\textit{\textbf{n}}, v\rangle_{\partial\Omega_{ij}}=0  , \\
		&(\textit{\textbf{p}}_h^n,\textit{\textbf{w}})_{\Omega_{ij}}+(u_h^n,\nabla\cdot\textit{\textbf{w}})_{\Omega_{ij}}-\langle \widehat{u_h^n},\textit{\textbf{w}}\cdot\textit{\textbf{n}}\rangle_{\partial\Omega_{ij}}=0 ,\\
		&(u_h^0,v)_{\Omega_{ij}}=(u_{0}(\x),v)_{\Omega_{ij}},
	\end{aligned}\right.
\end{equation}
where  $ (v,\textit{\textbf{w}})\in V_{h}^{k}\times \textit{\textbf{V}}_{h}^{k} $ for $ \Omega_{ij}\in\Omega_{h}$, and $ \widehat{u_{h}^{n}} $ and  $ \widehat{\textit{\textbf{p}}_{h}^{n}} $ are defined in \eqref{fluxes_1}.

\subsection{ Stability analysis of the fully discrete scheme}
\label{Sec4_2}

Similar to the previous numerical analysis of the spatial semi-discrete scheme, we will analyze the effectiveness of the fully discrete scheme \eqref{fully-discrete3} in theory. We prove that the fully discrete scheme is stable, which is the key to ensure that the numerical implementation can be effectively performed.

\begin{theorem}[$L^{2}$-stability] For periodic boundary condition, the fully discrete LDG scheme \eqref{fully-discrete3} is unconditionally stable. There exists a positive constant $ C_{max} $ depending on $\kappa(\x)$, such that
	the numerical solution $u_{h}^{n}$ satisfies
	\begin{equation*}\label{fullystability}
		\|u_{h}^{n}\|\leq C_{max} \|u_{h}^{0}\|, \quad n\geq1,
	\end{equation*}
	where $C_{max} $ is defined by \eqref{assumptions2}.
\end{theorem}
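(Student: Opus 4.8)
The plan is to carry out the discrete analogue of the proof of Theorem~\ref{Th3_1}, with the positivity and monotonicity of the $L1$ weights from \eqref{fully-discrete1-4} playing the role that the coercivity in Lemma~\ref{L2-1} plays in the semi-discrete analysis. First I would choose $v=u_h^n$ and $\textit{\textbf{w}}=\textit{\textbf{p}}_h^n$ in the first two equations of \eqref{fully-discrete3}, sum over all elements $\Omega_{ij}\in\Omega_h$, and use the periodic boundary conditions together with Lemma~\ref{bilinear}, $B(u_h^n,u_h^n;\textit{\textbf{p}}_h^n,\textit{\textbf{p}}_h^n)=0$, to get
\begin{equation*}
	({}^C D_M^{\alpha,\kappa(\x)}u_h^n,u_h^n)+\|\textit{\textbf{p}}_h^n\|^2=0,
\end{equation*}
so that in particular $({}^C D_M^{\alpha,\kappa(\x)}u_h^n,u_h^n)\le 0$ for every $n\ge1$.

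The next ingredient is a discrete substitute for the coercivity of Lemma~\ref{L2-1}, obtained from the substantial structure $e^{-\kappa(\x)(t_n-t_j)}=e^{-\kappa(\x)t_n}e^{\kappa(\x)t_j}$. Fix $n$ and set $w^j:=e^{-\kappa(\x)(t_n-t_j)}u_h^j$ for $j=0,1,\dots,n$; then $w^n=u_h^n$, and since the factor $e^{-\kappa(\x)t_n}$ does not depend on the summation index it commutes with the index-wise linear $L1$ quadrature, so by \eqref{fully-discrete1-3} the quantity ${}^C D_M^{\alpha,\kappa(\x)}u_h^n$ equals the value at level $n$ of the \emph{plain} $L1$ operator applied to $\{w^j\}$. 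Hence $({}^C D_M^{\alpha}w^n,w^n)\le0$; expanding this with \eqref{fully-discrete1-3}, moving the history terms to the right, applying Cauchy--Schwarz, the ordering $A_0^n\ge A_1^n\ge\cdots\ge A_{n-1}^n>0$ and the telescoping identity $A_{n-1}^n+\sum_{i=1}^{n-1}(A_{i-1}^n-A_i^n)=A_0^n$, one divides by $\|u_h^n\|$ (the bound being trivial if $u_h^n=0$) to reach the discrete maximum-principle estimate
\begin{equation*}
	\|u_h^n\|\le\max_{0\le j\le n-1}\big\|e^{-\kappa(\x)(t_n-t_j)}u_h^j\big\|.
\end{equation*}

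The step I expect to be the main obstacle is the last one, controlling the space-dependent exponential weights: because $\kappa(\x)$ varies over $\Omega$ (and may be negative), the factor $e^{-\kappa(\x)(t_n-t_j)}$ cannot simply be extracted from the $L^2$-norm as it could if $\kappa$ were a scalar. I would invoke the two-sided bound \eqref{assumptions2}: since $t_n-t_j\ge0$ for $0\le j\le n$, one has $0<e^{-\kappa(\x)(t_n-t_j)}\le C_{max}$ pointwise on $\Omega$, whence $\|e^{-\kappa(\x)(t_n-t_j)}u_h^j\|\le C_{max}\|u_h^j\|$, and combining this with the displayed estimate and an induction on $n$ (the case $n=1$ being immediate, as the history sum is empty) yields $\|u_h^n\|\le C_{max}\|u_h^0\|$. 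The delicate bookkeeping is to arrange that the exponential enters only through the single constant $C_{max}$ of \eqref{assumptions2}, rather than compounding across the time levels; this is exactly where the rewriting through $w^j$ — the discrete analogue of the substantial-structure reductions underlying Lemmas~\ref{L2-1} and \ref{L2-2} — does the work, since it reduces everything to the ordinary $L1$ operator, for which the convexity of the weights supplies the contraction.
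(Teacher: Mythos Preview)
Your overall strategy matches the paper's: test with $(u_h^n,\textit{\textbf{p}}_h^n)$, invoke Lemma~\ref{bilinear}, exploit the monotonicity \eqref{fully-discrete1-4} together with Cauchy--Schwarz, and close by induction on $n$. Your observation that ${}^C D_M^{\alpha,\kappa(\x)}u_h^n$ is the plain $L1$ operator applied to $w^j=e^{-\kappa(\x)(t_n-t_j)}u_h^j$ is correct and leads exactly to the inequality the paper records in \eqref{fullystability3}.

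The gap is in the induction. From
\[
\|u_h^n\|\le\max_{0\le j\le n-1}\big\|e^{-\kappa(\x)(t_n-t_j)}u_h^j\big\|
\]
and the crude bound $e^{-\kappa(\x)(t_n-t_j)}\le C_{max}$ you obtain $\|u_h^n\|\le C_{max}\max_{0\le j\le n-1}\|u_h^j\|$. If the induction hypothesis is $\|u_h^j\|\le C_{max}\|u_h^0\|$, this only yields $\|u_h^n\|\le C_{max}^{\,2}\|u_h^0\|$, so the loop does not close --- the constant compounds precisely in the way you anticipated. The $w^j$-rewriting does not cure this by itself: the sequence $\{w^j\}$ depends on $n$ through the factor $e^{-\kappa(\x)t_n}$, so the one-step contraction you extract from the plain $L1$ operator cannot be iterated directly across levels.

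The paper's remedy is to strengthen the induction hypothesis to the \emph{time-dependent} bound $\|u_h^m\|\le e^{C_\kappa t_m}\|u_h^0\|$, using $|\kappa(\x)|\le C_\kappa$ from \eqref{assumptions1}. With this hypothesis and the pointwise estimate $e^{-\kappa(\x)(t_n-t_m)}\le e^{C_\kappa(t_n-t_m)}$, the multiplicative identity $e^{C_\kappa(t_n-t_m)}\cdot e^{C_\kappa t_m}=e^{C_\kappa t_n}$ makes every history term in \eqref{fullystability3} bounded by the \emph{same} quantity $e^{C_\kappa t_n}\|u_h^0\|$; the telescoping sum $A_{n-1}^n+\sum_{i=1}^{n-1}(A_{i-1}^n-A_i^n)=A_0^n$ then closes the induction with no compounding. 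Only at the very end does one invoke $e^{C_\kappa t_n}\le C_{max}$.
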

\begin{proof}
	Taking test functions $v=u_{h}^{n}$, $\textit{\textbf{w}}= \textit{\textbf{p}}_{h}^{n}$, and summing all terms of \eqref{fully-discrete3}, we have
	\begin{equation}\label{fullystability1}
		-B(u_{h}^{n},u_{h}^{n};\textit{\textbf{p}}_{h}^{n},\textit{\textbf{p}}_{h}^{n})= ({}^C D_M^{\alpha,\kappa(\x)}u_{h}^{n},u_{h}^{n})_{\Omega}+\|\textit{\textbf{p}}_{h}^{n}\|^{2},
	\end{equation}
	where $B(u_{h}^{n},u_{h}^{n};\textit{\textbf{p}}_{h}^{n},\textit{\textbf{p}}_{h}^{n})$ is defined by \eqref{semi-discrete Stability1}.
		By using Lemma\,\ref{bilinear}, one can get
	\begin{equation}\label{fullystability2}
		B(u_{h}^{n},u_{h}^{n};\textit{\textbf{p}}_{h}^{n},\textit{\textbf{p}}_{h}^{n})=0.
	\end{equation}
	After a simple calculation, there exists 
	\begin{equation}\label{fullystability3}
		\begin{split}
			A_{0}^{n}\|u_{h}^{n}\|^2&\leq\sum_{i=1}^{n-1}(A_{i-1}^{n}-A_{i}^{n})(e^{-\kappa(\x)(t_{n}-t_{n-i})}u_{h}^{n-i},u_{h}^{n})+A_{n-1}^{n}(e^{-\kappa(\x)t_n}u_{h}^{0},u_{h}^{n})\\
			&	\leq\sum_{i=1}^{n-1}(A_{i-1}^{n}-A_{i}^{n})\|e^{-\kappa(\x)(t_{n}-t_{n-i})}u_{h}^{n-i}\|\cdot\|u_{h}^{n}\|\\
			&~~~~+A_{n-1}^{n}\|e^{-\kappa(\x)t_n}u_{h}^{0}\|\cdot \|u_{h}^{n}\|,
		\end{split}
	\end{equation}
	where \eqref{fully-discrete1-3}, \eqref{fully-discrete1-4}, \eqref{fullystability1} and \eqref{fullystability2} are used.
	
	Next, we use mathematical induction to prove the stability of the numerical scheme.
	For $n=1$, there is only the second term on the right side of \eqref{fullystability3}. By using \eqref{assumptions1}, one can get
	\begin{equation*}\label{fullystability6}
		\|u_{h}^{1}\|\leq e^{C_\kappa t_1}\|u_{h}^{0}\|.
	\end{equation*}
	Now, we suppose the following inequalities hold
	\begin{equation}\label{fullystability8}
		\|u_{h}^{m}\|\leq e^{C_{\kappa}t_{m}}\| u_{h}^{0}\|,\quad m=1,\cdots,n-1.
	\end{equation}
		For $m=n$, plugging \eqref{fullystability8} into \eqref{fullystability3} leads to
	\begin{equation*}\label{fullystability10}
		\begin{split}
			A_{0}^{n}\|u_{h}^{n}\|
			&\leq \sum_{i=1}^{n-1}(A_{i-1}^{n}-A_{i}^{n})e^{C_{\kappa}(t_{n}-t_{n-i})}\|u_{h}^{n-i}\|+A_{n-1}^{n}e^{C_{\kappa}t_{n}}\|u_{h}^{0}\|\\
			&\leq\sum_{i=1}^{n-1}(A_{i-1}^{n}-A_{i}^{n})e^{C_{\kappa}(t_{n}-t_{n-i})}e^{C_{\kappa}t_{n-i}}\|u_{h}^{0}\|+A_{n-1}^{n}e^{C_{\kappa}t_{n}}\|u_{h}^{0}\|\\
			&=A_{0}^{n}e^{C_{\kappa}t_{n}}\| u_{h}^{0}\|.\\
		\end{split}
	\end{equation*}	
	Thus,
	\begin{equation*}\label{fullystability11}
		\|u_{h}^{n}\|
		\leq e^{C_{\kappa}t_{n}}\| u_{h}^{0}\|\leq C_{max}\| u_{h}^{0}\|,
	\end{equation*}
	where $C_{max}$ is defined by \eqref{assumptions2}.
		This finishes the proof of the stability result.
\end{proof}

\subsection{Error estimates of the fully discrete scheme}
\label{Sec4_3}

 Now, we consider the numerical convergence of the fully discrete scheme \eqref{fully-discrete3}. The coefficient \eqref{fully-discrete1-3-1} is used to define a convolutional coefficient recursively \cite{Ren-2021-389}. For $n\geq1$, 
\begin{equation}\label{fullyconvergence0}
	P_{0}^{n}=\dfrac{1}{A_{0}^{n}}, \quad 	P_{n-j}^{n}=\dfrac{1}{A_{0}^{j}}\sum_{i=j+1}^{n}(A_{i-j-1}^{i}-A_{i-j}^{i})P_{n-i}^{n},\quad 1\leq j \leq n-1.	
\end{equation}
The discrete coefficient $P_{n-j}^{n}$ is defined to simulate the convolution kernel of the Riemann-Liouville fractional integral of order $ \alpha $. According to \eqref{fullyconvergence0}, it is not difficult to find that the discrete convolution kernel $P_{n-j}^{n}$ satisfies the following properties
\begin{equation}\label{fullyconvergence0_1}
	\sum_{j=m}^{n}P_{n-j}^{n}A_{j-m}^{j}\equiv1, \quad 1\leq m\leq n\leq M.
\end{equation}

In fact, one can also derive \eqref{fullyconvergence0} based on \eqref{fullyconvergence0_1}  and both of them can be used as the definition of discrete convolution kernel.

\begin{lemma} \label{kernel_1}
	Let $P_{n-j}^{n}$ and $ A_{i-1}^{n}$ be defined by \eqref{fullyconvergence0} and \eqref{fully-discrete1-3-1}. Then
	\begin{equation}\label{fullyconvergence1}
		P_{0}^{n}\cdot\sum_{i=1}^{n-j}(A_{i-1}^{n}-A_{i}^{n})P_{n-i-j}^{n-i}=P_{n-j}^{n},\quad i,j=1,2,\cdots,n-1.
	\end{equation}
\end{lemma}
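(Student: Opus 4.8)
The plan is to prove \eqref{fullyconvergence1} by induction on $\ell:=n-j$. Writing the claim out, we must show that for every admissible $n$ and every $1\le\ell\le n-1$,
\begin{equation*}
	\sum_{i=1}^{\ell}(A_{i-1}^{n}-A_{i}^{n})\,P_{\ell-i}^{n-i}=A_{0}^{n}\,P_{\ell}^{n}.
\end{equation*}
The only tools needed are $P_{0}^{N}=1/A_{0}^{N}$ and the recursion \eqref{fullyconvergence0}, which after reindexing reads $P_{L}^{N}=\frac{1}{A_{0}^{N-L}}\sum_{k=1}^{L}(A_{k-1}^{N-L+k}-A_{k}^{N-L+k})P_{L-k}^{N}$ for $1\le L\le N-1$ (equivalently, one may take consecutive differences of \eqref{fullyconvergence0_1}). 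For the base case $\ell=1$ the left side collapses to $(A_{0}^{n}-A_{1}^{n})P_{0}^{n-1}=(A_{0}^{n}-A_{1}^{n})/A_{0}^{n-1}$, while the right side equals $A_{0}^{n}P_{1}^{n}=A_{0}^{n}\cdot(A_{0}^{n}-A_{1}^{n})P_{0}^{n}/A_{0}^{n-1}=(A_{0}^{n}-A_{1}^{n})/A_{0}^{n-1}$ since $A_{0}^{n}P_{0}^{n}=1$; the two agree.

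For the inductive step, assume the identity at every admissible superscript for all subscripts strictly less than $\ell$. On the left side, split off the term $i=\ell$, which is $(A_{\ell-1}^{n}-A_{\ell}^{n})P_{0}^{n-\ell}=(A_{\ell-1}^{n}-A_{\ell}^{n})/A_{0}^{n-\ell}$, and for $1\le i\le\ell-1$ expand $P_{\ell-i}^{n-i}$ by the recursion; the key observation is that the difference of its superscript and subscript equals $(n-i)-(\ell-i)=n-\ell$ irrespective of $i$, so
\begin{equation*}
	P_{\ell-i}^{n-i}=\frac{1}{A_{0}^{n-\ell}}\sum_{k=1}^{\ell-i}(A_{k-1}^{n-\ell+k}-A_{k}^{n-\ell+k})\,P_{\ell-i-k}^{n-i}.
\end{equation*}
On the right side, expand $P_{\ell}^{n}$ by the same recursion, separate the $k=\ell$ contribution (which, using $P_{0}^{n}=1/A_{0}^{n}$, is exactly $(A_{\ell-1}^{n}-A_{\ell}^{n})/A_{0}^{n-\ell}$), and for $1\le k\le\ell-1$ substitute the induction hypothesis in the form $A_{0}^{n}P_{\ell-k}^{n}=\sum_{i=1}^{\ell-k}(A_{i-1}^{n}-A_{i}^{n})P_{\ell-k-i}^{n-i}$.

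After these substitutions both sides equal $\frac{1}{A_{0}^{n-\ell}}$ times the common constant $(A_{\ell-1}^{n}-A_{\ell}^{n})$ plus $\frac{1}{A_{0}^{n-\ell}}$ times a double sum; the left side produces $\sum_{i=1}^{\ell-1}\sum_{k=1}^{\ell-i}(A_{i-1}^{n}-A_{i}^{n})(A_{k-1}^{n-\ell+k}-A_{k}^{n-\ell+k})P_{\ell-i-k}^{n-i}$, while the right side produces the same expression with the outer summation over $k$ and the inner over $i$. Since both index sets are precisely $\{(i,k):i\ge1,\ k\ge1,\ i+k\le\ell\}$ and the summand is unchanged under $i\leftrightarrow k$ (note $\ell-i-k=\ell-k-i$), the two double sums coincide, which completes the induction. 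The main obstacle is purely bookkeeping — tracking the shifted superscripts $n-i$ and the coefficients $A_{\bullet}^{n-\ell+\bullet}$ — but once one notices the collapse $(n-i)-(\ell-i)=n-\ell$, the alignment of the two double sums, and hence the whole identity, is forced.
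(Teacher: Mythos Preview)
Your proof is correct, but it takes a different route from the paper's. The paper does not argue by induction; instead, denoting the left side of \eqref{fullyconvergence1} by $L(j)$, it verifies that $\sum_{j=m}^{n-1}L(j)\,A_{j-m}^{j}=\sum_{j=m}^{n-1}P_{n-j}^{n}\,A_{j-m}^{j}$ for every $1\le m\le n-1$, by swapping the order of summation and invoking the convolution identity \eqref{fullyconvergence0_1}. Since the resulting triangular system (diagonal entries $A_{0}^{j}>0$) is invertible, this forces $L(j)=P_{n-j}^{n}$. Your approach is more elementary in that it uses only the recursion \eqref{fullyconvergence0} and avoids both \eqref{fullyconvergence0_1} and the implicit linear-algebra step, at the cost of tracking two nested substitutions; the paper's argument is shorter and more structural.

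One small remark on wording: the phrase ``the summand is unchanged under $i\leftrightarrow k$'' is not literally true (e.g.\ $P_{\ell-i-k}^{\,n-i}$ would become $P_{\ell-i-k}^{\,n-k}$ under that swap). What your computation actually establishes, and all that is needed, is that the two iterated sums share the \emph{same} summand $f(i,k)=(A_{i-1}^{n}-A_{i}^{n})(A_{k-1}^{n-\ell+k}-A_{k}^{n-\ell+k})P_{\ell-i-k}^{\,n-i}$ over the common index set $\{(i,k):i,k\ge1,\ i+k\le\ell\}$; their equality then follows simply by interchanging the order of summation, with no symmetry required.
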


\begin{proof}
	It is straightforward to evaluate. Denoting the left-hand side of \eqref{fullyconvergence1} by $L$, for $1\leq m \leq n-1$, one can obtain
	\begin{equation*}\label{fullyconvergence1_1}
		\begin{split}
			\sum_{j=m}^{n-1}L\cdot A_{j-m}^{j}	&=\sum_{j=m}^{n-1}\Big[P_{0}^{n}\cdot\sum_{i=1}^{n-j}(A_{i-1}^{n}-A_{i}^{n})P_{n-i-j}^{n-i}\Big]\cdot A_{j-m}^{j}\\
			&=\sum_{i=1}^{n-m}P_{0}^{n}\cdot\Big[\sum_{j=m}^{n-i}P_{n-i-j}^{n-i}A_{j-m}^{j}\Big]\cdot(A_{i-1}^{n}-A_{i}^{n}) \\
			&=\sum_{i=1}^{n-m}P_{0}^{n}\cdot(A_{i-1}^{n}-A_{i}^{n})\\
			&=1-P_{0}^{n}\cdot A_{n-m}^{n}.
		\end{split}
	\end{equation*}
		On the other hand, according to \eqref{fullyconvergence0_1}, there exists 
	\begin{equation*}\label{fullyconvergence1_2}
		\sum_{j=m}^{n-1}P_{n-j}^{n}\cdot A_{j-m}^{j}=1-P_{0}^{n}\cdot A_{n-m}^{n},
	\end{equation*}
	which concludes the proof.
\end{proof}

Next Lemma suggests that the convergence order of global truncation error of L1 scheme is closely related to the regularity of the solution; even using small mesh parameter $\gamma$ the optimal time accuracy of order $O(\tau^{2-\alpha})$  can be achieved under the condition that the regularity of the solution is well. This result can be clearly observed in Table \ref{example_2_7} of Section \ref{Sec5}.	

\begin{lemma}\label{truncation error}
	For $u\in C_{\delta}^2((0,T];H^{2}(\Omega_{h}))$, denoting  $\Upsilon^{n}(\x)$ as the temporal truncation error of the $ L1 $ scheme of $ {}_0^C D_t^{\alpha,\kappa(\x)}u(t,\x) $ at $t=t_n$, there exists a constant $C>0$, such that
	\begin{equation}\label{fullyconvergence1_3}
		\sum_{j=1}^{n}P_{n-j}^{n}\|\Upsilon^{j}(\x)\|\leq\dfrac{C}{\delta(1-\alpha)}\tau^{\min\{2-\alpha,\gamma\delta\}}, \quad 1\leq n\leq M,
	\end{equation}
	where $ C_{\delta}^2((0,T])= \{u(t)\big|\ u\in  C^2((0,T]),|u^{(l)}|\leq C_u(1+t^{\delta-l}),\ l=1,2,\ 0<t\leq T\}$ and the regularity parameter $ \delta\in(0,1)\cup(1,2) $.
\end{lemma}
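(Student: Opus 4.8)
The plan is to bound $\sum_{j=1}^{n}P_{n-j}^{n}\|\Upsilon^{j}(\x)\|$ in three steps: (i) reduce the substantial $L1$ truncation error to the classical Caputo $L1$ consistency error of a regularized function; (ii) establish a pointwise estimate for that error on the graded mesh using the $C_\delta^2$ regularity; (iii) sum the pointwise bounds against $P_{n-j}^{n}$, exploiting that $\{P_{n-j}^{n}\}_{j}$ is the discrete inverse of $\{A_{i}^{n}\}_{i}$ and therefore behaves like the Riemann--Liouville integral kernel \cite{Ren-2021-389}. For step (i): by \eqref{fully-discrete1}, both ${}_0^C D_t^{\alpha,\kappa(\x)}u(t_n,\x)$ and ${}^C D_M^{\alpha,\kappa(\x)}u(t_n,\x)$ equal $e^{-\kappa(\x)t_n}$ times the corresponding operator applied to $\widetilde{u}(t,\x):=e^{\kappa(\x)t}u(t,\x)$, so $\Upsilon^n(\x)=e^{-\kappa(\x)t_n}\bigl[{}_0^C D_t^{\alpha}\widetilde{u}-{}^C D_M^{\alpha}\widetilde{u}\bigr](t_n,\x)$. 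Differentiating the product and invoking \eqref{assumptions1}--\eqref{assumptions2}, one verifies $|\partial_t^\ell\widetilde{u}(t,\x)|\le C(C_\kappa,C_{max})\,C_u(1+t^{\delta-\ell})$ for $\ell=1,2$, i.e. $\widetilde{u}\in C_\delta^2((0,T];H^2(\Omega_h))$ with a modified constant, while $|e^{-\kappa(\x)t_n}|\le C_{max}$. Hence it suffices to estimate the standard nonuniform $L1$ consistency error for a function in $C_\delta^2$.

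For step (ii), the consistency error equals
\begin{equation*}
\bigl[{}_0^C D_t^{\alpha}\widetilde{u}-{}^C D_M^{\alpha}\widetilde{u}\bigr](t_n)=\frac{1}{\Gamma(1-\alpha)}\sum_{i=1}^{n}\int_{t_{i-1}}^{t_i}\bigl(\partial_s\widetilde{u}(s)-\bar\delta_i\widetilde{u}\bigr)(t_n-s)^{-\alpha}\,ds,\qquad \bar\delta_i\widetilde{u}:=\frac{\widetilde{u}(t_i)-\widetilde{u}(t_{i-1})}{\tau_i}.
\end{equation*}
I would split off the first subinterval, where only $|\partial_s\widetilde{u}(s)|\le C(1+s^{\delta-1})$ is available (integrable since $\delta>0$, which produces the factor $\delta^{-1}$), from the interior subintervals $i\ge2$, where $s\ge t_{i-1}>0$ gives $|\partial_s\widetilde{u}(s)-\bar\delta_i\widetilde{u}|\le\tau_i\max_{[t_{i-1},t_i]}|\partial_s^2\widetilde{u}|\le C\tau_i(1+t_{i-1}^{\delta-2})$; for the interior terms with $i<n$, the identity $\int_{t_{i-1}}^{t_i}(\partial_s\widetilde{u}-\bar\delta_i\widetilde{u})\,ds=0$ lets one subtract the constant $(t_n-t_i)^{-\alpha}$ before bounding. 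Using the graded-mesh relations $t_i/t_{i-1}\le C$, $\tau_i\le C\gamma M^{-1}t_i^{1-1/\gamma}$, $\tau\le C\gamma T/M$ and $t_1\le C\tau^{\gamma}$, and separating the near-origin interior indices (where $t_{i-1}^{\delta-2}$ is active and produces the rate $\tau^{\gamma\delta}$) from the remaining ones (where $\partial_s^2\widetilde{u}$ is bounded and the classical $L1$ argument gives $\tau^{2-\alpha}$), one arrives at the graded-mesh consistency estimate of \cite{Stynes-2017-55,Li-2016-316}:
\begin{equation*}
\|\Upsilon^{j}(\x)\|\le\frac{C}{\delta(1-\alpha)}\,t_j^{-\alpha}\,\tau^{\min\{2-\alpha,\gamma\delta\}}\quad(2\le j\le M),\qquad \|\Upsilon^{1}(\x)\|\le\frac{C}{\delta}\,t_1^{\delta-\alpha}.
\end{equation*}

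For step (iii), the identity \eqref{fullyconvergence0_1} with $m=1$ gives $\sum_{j=1}^{n}P_{n-j}^{n}A_{j-1}^{j}=1$, and since $\tau_1=t_1$ the mean value theorem yields $A_{j-1}^{j}=\xi_j^{-\alpha}/\Gamma(1-\alpha)$ with $0<\xi_j\le t_j$, so $A_{j-1}^{j}\ge t_j^{-\alpha}/\Gamma(1-\alpha)$ for every $j$; together with $P_{n-j}^{n}\ge0$ (from \eqref{fully-discrete1-4} and the recursion \eqref{fullyconvergence0}) this gives $\sum_{j=1}^{n}P_{n-j}^{n}t_j^{-\alpha}\le\Gamma(1-\alpha)$. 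Combining this with the pointwise bounds of step (ii), and using $t_1^{\delta-\alpha}=t_1^{\delta}t_1^{-\alpha}\le C\tau^{\gamma\delta}t_1^{-\alpha}\le C\tau^{\min\{2-\alpha,\gamma\delta\}}t_1^{-\alpha}$ for the $j=1$ term,
\begin{equation*}
\sum_{j=1}^{n}P_{n-j}^{n}\|\Upsilon^{j}(\x)\|\le\frac{C}{\delta(1-\alpha)}\,\tau^{\min\{2-\alpha,\gamma\delta\}}\sum_{j=1}^{n}P_{n-j}^{n}t_j^{-\alpha}\le\frac{C}{\delta(1-\alpha)}\,\tau^{\min\{2-\alpha,\gamma\delta\}},
\end{equation*}
which is the assertion; Lemma~\ref{kernel_1} is the auxiliary identity that makes the recursive structure of $P_{n-j}^{n}$ compatible with such telescoping sums.

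The main obstacle is step (ii): obtaining the pointwise estimate with the sharp exponent $\min\{2-\alpha,\gamma\delta\}$ and the explicit constant $C/(\delta(1-\alpha))$. This requires a careful two-regime decomposition of the interior sum, balancing the intervals near $t=0$ — where $\max_{[t_{i-1},t_i]}|\partial_s^2\widetilde{u}|\asymp t_{i-1}^{\delta-2}$ is large while $\tau_i\asymp M^{-1}t_i^{1-1/\gamma}$ is correspondingly small — against the intervals away from the origin where the standard $O(\tau^{2-\alpha})$ $L1$ analysis applies, and then checking that neither regime, nor the subsequent convolution with $P_{n-j}^{n}$, introduces a spurious logarithmic factor; the $\delta$- and $(1-\alpha)$-dependence of the constant must be tracked through the Beta-type integrals $\int_0^{t_1}s^{\delta-1}(t_j-s)^{-\alpha}\,ds$ that arise. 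Steps (i) and (iii) are then routine.
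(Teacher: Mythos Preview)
Your proposal is correct and follows exactly the route the paper takes: the paper's proof consists of the single sentence ``The estimate can be obtained by combining \eqref{assumptions2} and the proof process of \cite[Lemma~3.2]{Ren-2021-389}'', i.e.\ your step~(i) (use the boundedness of $e^{-\kappa(\x)t}$ to reduce $\Upsilon^{n}$ to the classical nonuniform $L1$ consistency error for $\widetilde u=e^{\kappa(\x)t}u$) followed by invoking the cited lemma, whose content is precisely your steps~(ii)--(iii). You have simply unpacked that citation; the pointwise estimate in (ii) and the summation via $\sum_{j}P_{n-j}^{n}A_{j-1}^{j}=1$ together with $A_{j-1}^{j}\ge t_j^{-\alpha}/\Gamma(1-\alpha)$ in (iii) are the standard ingredients of the Liao--Ren framework referenced by the paper.
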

\begin{proof}
	The estimate \eqref{fullyconvergence1_3} can be obtained by combining \eqref{assumptions2} and the proof process of \cite[Lemma 3.2]{Ren-2021-389}. 	
\end{proof}	

We will use the convergence results of semi-discrete scheme obtained in the previous section to prove the convergence of fully discrete scheme \eqref{fully-discrete3}. Next, we only discuss the relationship between the solution of the spatial semi-discrete scheme and the solution of the fully discrete scheme, that is, the error estimates of temporal discretization.		

\begin{theorem} \label{temporal discretization}
	Let $u_h\in C_{\delta}^2((0,T];V_{h}^{k}) $ and $u_{h}^{n}\in V_{h}^{k}$ be the numerical solutions of the spatial semi-discrete scheme \eqref{semi-discrete} and the fully discrete scheme \eqref{fully-discrete3}, respectively. Then the following estimate holds	
	
	\begin{equation*}\label{fullyconvergence}
		\|u_h(t_n,\x)-u_{h}^{n}(\x)\|\leq C\tau^{\min\{2-\alpha,\gamma\delta\}},\quad n=1,\cdots,M,
	\end{equation*}
	where $C$ is a constant independent of $\tau$.
\end{theorem}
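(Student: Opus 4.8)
The plan is to convert the temporal error into a discrete fractional recursion driven by the $L1$ truncation error $\Upsilon^n$, and then run a graded-mesh discrete fractional Grönwall argument built on the convolution kernel $P_{n-j}^n$ of \eqref{fullyconvergence0}. Concretely, the semi-discrete solution $u_h$ sampled at $t=t_n$ satisfies \eqref{fully-discrete2} (this is just \eqref{semi-discrete} at $t_n$ after using \eqref{fully-discrete1} to write ${}_0^C D_t^{\alpha,\kappa(\x)}u_h(t_n,\x)={}^C D_M^{\alpha,\kappa(\x)}u_h(t_n,\x)+\Upsilon^n(\x)$), so I would subtract \eqref{fully-discrete3} from it. Setting $\xi^n:=u_h(t_n,\x)-u_h^n(\x)$ and $e_{\textit{\textbf{p}}}^n:=\textit{\textbf{p}}_h(t_n,\x)-\textit{\textbf{p}}_h^n(\x)$ — with $\xi^0=0$ since both schemes use the same $L^2$-projected initial datum — yields
\[ ({}^C D_M^{\alpha,\kappa(\x)}\xi^n,v)_{\Omega_{ij}}+(\Upsilon^n,v)_{\Omega_{ij}}+(e_{\textit{\textbf{p}}}^n,\nabla v)_{\Omega_{ij}}-\langle\widehat{e_{\textit{\textbf{p}}}^n}\cdot\textit{\textbf{n}},v\rangle_{\partial\Omega_{ij}}=0 \]
together with $(e_{\textit{\textbf{p}}}^n,\textit{\textbf{w}})_{\Omega_{ij}}+(\xi^n,\nabla\cdot\textit{\textbf{w}})_{\Omega_{ij}}-\langle\widehat{\xi^n},\textit{\textbf{w}}\cdot\textit{\textbf{n}}\rangle_{\partial\Omega_{ij}}=0$. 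Taking $v=\xi^n$, $\textit{\textbf{w}}=e_{\textit{\textbf{p}}}^n$, summing over all $\Omega_{ij}$, and using Lemma \ref{bilinear} to annihilate $B(\xi^n,\xi^n;e_{\textit{\textbf{p}}}^n,e_{\textit{\textbf{p}}}^n)$, I obtain the energy identity
\[ ({}^C D_M^{\alpha,\kappa(\x)}\xi^n,\xi^n)+\|e_{\textit{\textbf{p}}}^n\|^2=-(\Upsilon^n,\xi^n). \]

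Next I would drop $\|e_{\textit{\textbf{p}}}^n\|^2\ge0$, expand ${}^C D_M^{\alpha,\kappa(\x)}\xi^n$ via \eqref{fully-discrete1-3} (the $\xi^0$-term disappears), and combine the monotonicity \eqref{fully-discrete1-4} of the coefficients $A_i^n$, the Cauchy--Schwarz inequality, and the pointwise bound $\|e^{-\kappa(\x)(t_n-t_{n-i})}\xi^{n-i}\|\le e^{C_\kappa(t_n-t_{n-i})}\|\xi^{n-i}\|$ (from \eqref{assumptions1}) to reduce the energy identity to the scalar recursion
\[ A_0^n\|\xi^n\|\le\sum_{i=1}^{n-1}(A_{i-1}^n-A_i^n)\,e^{C_\kappa(t_n-t_{n-i})}\,\|\xi^{n-i}\|+\|\Upsilon^n\|,\qquad \|\xi^0\|=0. \]
The division by $\|\xi^n\|$ is legitimate, the case $\xi^n=0$ being trivial.

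Then, writing $G^n:=\sum_{j=1}^n P_{n-j}^n\|\Upsilon^j(\x)\|$, I would verify from Lemma \ref{kernel_1} (equivalently \eqref{fullyconvergence0_1}) the kernel identity $A_0^n G^n-\sum_{i=1}^{n-1}(A_{i-1}^n-A_i^n)G^{n-i}=\|\Upsilon^n\|$, and note $G^n\ge0$ because each $P_{n-j}^n\ge0$ (again by \eqref{fully-discrete1-4}). An induction on $n$, parallel to the fully discrete stability argument around \eqref{fullystability8}, then closes: assuming $\|\xi^m\|\le e^{C_\kappa t_m}G^m$ for $m\le n-1$ and using the telescoping $e^{C_\kappa(t_n-t_{n-i})}e^{C_\kappa t_{n-i}}=e^{C_\kappa t_n}$, the recursion gives $A_0^n\|\xi^n\|\le e^{C_\kappa t_n}A_0^n G^n-(e^{C_\kappa t_n}-1)\|\Upsilon^n\|\le e^{C_\kappa t_n}A_0^n G^n$, hence $\|\xi^n\|\le e^{C_\kappa t_n}G^n$. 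Finally, Lemma \ref{truncation error} — whose hypothesis is exactly $u_h\in C_\delta^2((0,T];V_h^k)$ — bounds $G^n\le\frac{C}{\delta(1-\alpha)}\tau^{\min\{2-\alpha,\gamma\delta\}}$, and since $e^{C_\kappa t_n}\le e^{C_\kappa T}$ the asserted estimate $\|u_h(t_n,\x)-u_h^n(\x)\|\le C\tau^{\min\{2-\alpha,\gamma\delta\}}$ follows with $C$ independent of $\tau$.

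The main obstacle is the spatiotemporal coupling carried by the substantial weights $e^{-\kappa(\x)(t_n-t_{n-i})}$. Because $\kappa(\x)$ may be negative and depends on $\x$, these weights can exceed $1$ and cannot be pulled out of the $L^2(\Omega)$-norms, so the Fourier/Laplace machinery of Lemmas \ref{L2-1} and \ref{L2-2} has no direct discrete analogue here. The crude estimate $\|e^{-\kappa(\x)(t_n-t_{n-i})}\xi^{n-i}\|\le C_{max}\|\xi^{n-i}\|$ would let powers of $C_{max}$ pile up through the $n$ memory levels and destroy any constant uniform in $n\le M$; the fix, exactly as in the stability proof, is to retain the sharper factor $e^{C_\kappa(t_n-t_{n-i})}$ and pick the induction ansatz $\|\xi^n\|\le e^{C_\kappa t_n}G^n$ so that the exponentials telescope exactly. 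The remaining work — confirming the kernel identity for $G^n$ and tracking constants — is routine given Lemmas \ref{kernel_1} and \ref{truncation error}.
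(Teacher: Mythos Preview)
Your proposal is correct and follows essentially the same route as the paper: derive the error equation by subtracting \eqref{fully-discrete3} from \eqref{fully-discrete2}, take $v=\xi^n$, $\textit{\textbf{w}}=e_{\textit{\textbf{p}}}^n$, kill the bilinear form via Lemma~\ref{bilinear}, reduce to the scalar recursion $A_0^n\|\xi^n\|\le\sum_{i=1}^{n-1}(A_{i-1}^n-A_i^n)e^{C_\kappa(t_n-t_{n-i})}\|\xi^{n-i}\|+\|\Upsilon^n\|$, close by induction using Lemma~\ref{kernel_1}, and finish with Lemma~\ref{truncation error}. Your induction ansatz $\|\xi^m\|\le e^{C_\kappa t_m}G^m$ with the telescoping $e^{C_\kappa(t_n-t_{n-i})}e^{C_\kappa t_{n-i}}=e^{C_\kappa t_n}$ is in fact a cleaner way of tracking the substantial weight than the paper's substitution $\theta^{n-i}=e^{C_\kappa(t_n-t_{n-i})}\|e_u^{n-i}\|$, which suppresses the $n$-dependence in the notation; both arguments land on $\|\xi^n\|\le C\sum_{j=1}^n P_{n-j}^n\|\Upsilon^j\|$ before invoking Lemma~\ref{truncation error}.
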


\begin{proof}
	Suppose that $(u_h,\textbf{\textit{p}}_h)$ and $(u_h^n,\textbf{\textit{p}}_h^n)$ are the numerical solutions of the scheme \eqref{semi-discrete}
	and \eqref{fully-discrete3}, respectively. Then $(u_h(t_n,\x),\textbf{\textit{p}}_h(t_n,\x))$ satisfies Eq.~\eqref{fully-discrete2}.
	Combining
	\eqref{fully-discrete3} with \eqref{fully-discrete2} implies that
	\begin{equation}\label{fullyconvergence3}
		-B(e_{u}^{n},v;e_{\textit{\textbf{p}}}^{n},\textit{\textbf{w}})= ({}^C D_M^{\alpha,\kappa(x)}e_{u}^{n},v)+(e_{\textit{\textbf{p}}}^{n},\textit{\textbf{w}})+(\Upsilon^{n}(\x),v),
	\end{equation}
	where $e_{u}^{n}=u_h(t_{n},\x)-u_{h}^{n}$, $e_{\textit{\textbf{p}}}^{n}=\textit{\textbf{p}}_h(t_n,\x)-\textit{\textbf{p}}_h^{n}$.
	
	Taking the test functions $v=e_{u}^{n}\in V_{h}^{k} $, $\textit{\textbf{w}}=e_{\textit{\textbf{p}}}^{n}\in \textit{\textbf{V}}_{h}^{k} $ in \eqref{fullyconvergence3} leads to
	\begin{equation*}\label{fullyconvergence3-2}
		\begin{split}
			({}^C D_M^{\alpha,\kappa(\x)}e_{u}^{n},e_{u}^{n})_{\Omega}+\|e_{\textit{\textbf{p}}}^{n}\|^{2}&=-(\Upsilon^{n}(\x),e_{u}^{n}),
			\\
		\end{split}
	\end{equation*}
	where we use the fact that $B(e_{u}^{n},e_{u}^{n};e_{\textbf{\textit{p}}}^{n},e_{\textbf{\textit{p}}}^{n})=0$.
	By using \eqref{fully-discrete1-3} and the fact that $e_{u}^{0}=0$, one can obtain
	
	\begin{equation*}\label{fullyconvergence4}
		\begin{split}
			A_{0}^{n}\|e_{u}^{n}\|^{2}&\leq\sum_{i=1}^{n-1}(A_{i-1}^{n}-A_{i}^{n})\Big(e^{-\kappa(\x)(t_{n}-t_{n-i})}e_{u}^{n-i},e_{u}^{n}\Big)+\|\Upsilon^{n}(\x)\|\cdot\|e_{u}^{n}\|.\\
		\end{split}
	\end{equation*}
	Further, there exists
	\begin{equation*}\label{fullyconvergence5}
		\begin{split}
			A_{0}^{n}\|e_{u}^{n}\|&\leq\sum_{i=1}^{n-1}(A_{i-1}^{n}-A_{i}^{n})e^{C_{\kappa}(t_{n}-t_{n-i})}\|e_{u}^{n-i}\|+\|\Upsilon^{n}(\x)\|,\\
		\end{split}
	\end{equation*}
	where $ C_{\kappa} $ is defined by \eqref{assumptions1}.

	Let $ \theta^{n-i} =e^{C_{\kappa}(t_{n}-t_{n-i})}\|e_{u}^{n-i}\|$. Then $\theta^{n}=\|e_{u}^{n}\|$ and one can get
	\begin{equation}\label{fullyconvergence6}
		\begin{split}
			A_{0}^{n}\theta^{n}&\leq\sum_{i=1}^{n-1}(A_{i-1}^{n}-A_{i}^{n})\theta^{n-i}+\|\Upsilon^{n}(\x)\|.\\
		\end{split}
	\end{equation}	
	Next, we prove $ \theta^{n}\leq \sum_{j=1}^{n}P_{n-j}^{n}\cdot\|\Upsilon^{j}(\x)\|$ by mathematical induction.	For $n=1$, there is only the second term on the right side of \eqref{fullyconvergence6}. By using \eqref{fullyconvergence0}, one can get $\theta^{1}\leq P_{0}^{1}\cdot\|\Upsilon^{1}(\x)\|$.
	Now, we suppose 
	\begin{equation}\label{fullystability7}
		\theta^{m}\leq \sum_{j=1}^{m}P_{m-j}^{m}\cdot\|\Upsilon^{j}(\x)\|,\quad m=1,\cdots,n-1.
	\end{equation}
	For $m=n$, plugging \eqref{fullystability7}  into \eqref{fullyconvergence6} leads to
	\begin{equation*}\label{fullyconvergence8}
		\begin{split}
			A_{0}^{n}\theta^{n}&\leq\sum_{i=1}^{n-1}(A_{i-1}^{n}-A_{i}^{n})\sum_{j=1}^{n-i}P_{n-i-j}^{n-i}\cdot\|\Upsilon^{j}(\x)\|+\|\Upsilon^{n}(\x)\|\\
			&=\sum_{j=1}^{n-1}\sum_{i=1}^{n-j}(A_{i-1}^{n}-A_{i}^{n})\cdot P_{n-i-j}^{n-i}\cdot\|\Upsilon^{j}(\x)\|+\|\Upsilon^{n}(\x)\|\\
		\end{split}
	\end{equation*}	
According to \eqref{fullyconvergence0} and Lemma\,\ref{kernel_1}, there exists
	\begin{equation*}\label{fullystability9}
		\theta^{n}\leq \sum_{j=1}^{n}P_{n-j}^{n}\cdot\|\Upsilon^{j}(\x)\|,\quad n=1,\cdots,M.
	\end{equation*}	
	
	Finally, by using Lemma\,\ref{truncation error}, it holds	
	\begin{equation*}\label{fullystability9_1}
		\|e_{u}^{n}\|\leq C\tau^{\min\{2-\alpha,\gamma\delta\}},\quad n=1,\cdots,M,
	\end{equation*}	
	which completes the proof.
\end{proof}

\begin{theorem}\label{error} Let $u(t,\x)\in C_{\delta}^2((0,T];H^{s+1}(\Omega_{h})\cap H^{2}(\Omega_{h})) $ and $u_{h}^{n}$  be the exact solution of Eq.~\eqref{equivalent} and the numerical solution of the fully discrete scheme \eqref{fully-discrete3}, respectively. Then for\,$ s\geq k $, the following estimate holds
	\begin{equation}\label{fullyconvergence9}
		\sum_{n=1}^{M}\int_{t_n-1}^{t_n}\|\Pi_{1,n}u(t,\x)-\bar{u}_{h}(t,\x)\|dt\leq C(h^{k+1}+\tau^{\min\{2-\alpha,\gamma\delta\}}),
	\end{equation}
	where $C>0$ is a constant independent of $\tau$, $h$, and $ \Pi_{1,n}u(t,\x) $ and $\bar{u}_{h}(t,\x)$ are linear interpolation functions on the time interval $ (t_{n-1},t_n) $ with  $ \Pi_{1,n}u(t_i,\x)=u(t_i,\x) $,\,$\bar{u}_{h}(t_i,\x)=u_{h}^{i}, i=n-1,n$.
\end{theorem}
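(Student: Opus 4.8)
The plan is to combine the three ingredients already established: the spatial error estimate of Theorem \ref{Th3_2} ($\int_0^T\|u-u_h\|^2\,dt\le Ch^{2k+2}$), the temporal error estimate of Theorem \ref{temporal discretization} ($\|u_h(t_n,\x)-u_h^n\|\le C\tau^{\min\{2-\alpha,\gamma\delta\}}$), and the interpolation bounds for $\Pi_{1,n}$ on each subinterval. First I would insert the intermediate function $\Pi_{1,n}u_h(t,\x)$, the linear-in-time interpolant of the \emph{semi-discrete} solution $u_h$ at the nodes $t_{n-1},t_n$, and split
\begin{equation*}
	\|\Pi_{1,n}u(t,\x)-\bar u_h(t,\x)\|\le \|\Pi_{1,n}u-\Pi_{1,n}u_h\| + \|\Pi_{1,n}u_h-\bar u_h\|.
\end{equation*}
The second term is controlled nodewise: since both $\Pi_{1,n}u_h$ and $\bar u_h$ are linear on $(t_{n-1},t_n)$, their difference is the linear interpolant of $\{u_h(t_i,\x)-u_h^i\}_{i=n-1,n}$, so on that interval it is bounded by $\max_{i\in\{n-1,n\}}\|u_h(t_i,\x)-u_h^i\|\le C\tau^{\min\{2-\alpha,\gamma\delta\}}$ by Theorem \ref{temporal discretization}; integrating over $(t_{n-1},t_n)$ and summing over $n$ contributes $C\,T\,\tau^{\min\{2-\alpha,\gamma\delta\}}$, which is of the stated order.

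For the first term, $\|\Pi_{1,n}u-\Pi_{1,n}u_h\|=\|\Pi_{1,n}e_u\|$ where $e_u=u-u_h$, and again by linearity this is at most $\max_{i\in\{n-1,n\}}\|e_u(t_i,\x)\|$ pointwise, but a pointwise-in-time bound on $\|e_u(t,\x)\|$ is \emph{not} what Theorem \ref{Th3_2} gives — it only provides an $L^2(0,T)$-in-time estimate. So instead I would integrate directly: on each interval $(t_{n-1},t_n)$, $\int_{t_{n-1}}^{t_n}\|\Pi_{1,n}e_u(t,\x)\|\,dt\le \int_{t_{n-1}}^{t_n}\big(\|e_u(t_{n-1},\x)\|+\|e_u(t_n,\x)\|\big)\,dt$ is crude; the cleaner route is to bound $\int_{t_{n-1}}^{t_n}\|\Pi_{1,n}e_u\|\,dt$ by $C\int_{t_{n-1}}^{t_n}\|e_u(t,\x)\|\,dt$ plus an interpolation-remainder term $\int_{t_{n-1}}^{t_n}\|(\mathrm{Id}-\Pi_{1,n})e_u\|\,dt$. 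Summing the first over $n$ gives $\int_0^T\|e_u\|\,dt\le T^{1/2}\big(\int_0^T\|e_u\|^2\,dt\big)^{1/2}\le C h^{k+1}$ by Cauchy--Schwarz and Theorem \ref{Th3_2}. The interpolation-remainder term is handled by the standard estimate $\|(\mathrm{Id}-\Pi_{1,n})w\|_{L^1(t_{n-1},t_n)}\le C\tau_n^2\|w_{tt}\|_{L^1(t_{n-1},t_n)}$ applied to $w=e_u$, using $u\in C_\delta^2((0,T];H^{s+1}\cap H^2)$ and the corresponding regularity of $u_h$ (or, near $t=0$, the weak-singularity bound $|u_{tt}|\le C_u(1+t^{\delta-2})$ together with the graded mesh $t_n=(n/M)^\gamma T$, which makes $\int_{t_{n-1}}^{t_n}\tau_n^2 t^{\delta-2}\,dt$ summable to $O(\tau^{\min\{2-\alpha,\gamma\delta\}})$ exactly as in Lemma \ref{truncation error}).

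The main obstacle will be the near-initial-time contribution to the interpolation remainder: because the solution has only $C_\delta$ regularity with $\delta<2$ in general, a naive $\tau^2\|u_{tt}\|_\infty$ bound fails, and one must exploit the graded mesh grading $\gamma$ precisely — splitting the sum over $n$ into the first interval (where $|e_u|$ itself, not its second derivative, is used, giving $O(\tau_1)=O((1/M)^\gamma)$) and the remaining intervals (where $|u_{tt}(t)|\lesssim t^{\delta-2}$ on $(t_{n-1},t_n)$ gives a per-interval bound $\tau_n^2 t_{n-1}^{\delta-2}$), then summing to recover the balance $\min\{2-\alpha,\gamma\delta\}$; this is essentially a rerun of the analysis behind Lemma \ref{truncation error}, so I would cite \cite{Ren-2021-389} and \cite{Stynes-2017-55} for the mesh-summation lemma rather than redo it. Finally, assembling the three pieces and using the triangle inequality yields \eqref{fullyconvergence9}.
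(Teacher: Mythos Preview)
Your proposal is correct and follows essentially the same route as the paper: the same splitting through $\Pi_{1,n}u_h$, the nodewise bound via Theorem \ref{temporal discretization} for the temporal part, and the identity-plus-interpolation-remainder decomposition together with Cauchy--Schwarz and Theorem \ref{Th3_2} for the spatial part. The paper is in fact terser on the interpolation remainder, simply recording it as $C_3\tau^2$ without discussing the weak singularity at $t=0$, so your graded-mesh treatment of that piece is more careful than what the paper itself provides.
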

\begin{proof}
	Let $u_h$ be the numerical solution of the spatial semi-discrete scheme \eqref{semi-discrete} and $ \Pi_{1,n}u_h(t,\x) $ be linear interpolation function of $u_h(t,\x) $ on the time interval $ (t_{n-1},t_n)$. By using Theorem\,\ref{Th3_2} and the Cauchy-Schwarz inequality, one can get
	\begin{equation*}\label{fullyconvergence10}
		\begin{split}
			\sum_{n=1}^{M}\int_{t_n-1}^{t_n}\|\Pi_{1,n}(u(t,\x)-u_h(t,\x)) \|dt&\leq\int_{0}^{T}\|u(t,\x)-u_{h}(t,\x)\|dt+C_3\tau^2\\
			&\leq C(h^{k+1}+\tau^2).\\	
		\end{split}
	\end{equation*}
According to Theorem\,\ref{temporal discretization}, there exists
	\begin{equation*}\label{fullyconvergence11}
		\begin{split}
			\sum_{n=1}^{M}\int_{t_n-1}^{t_n}\|\Pi_{1,n}u_h(t,\x)-\bar{u}_{h}(t,\x)\|dt&\leq T\cdot\max\limits_{0\leq n\leq M}\|u_h(t_n,\x)-u_{h}^{n}(\x)\|\\
			&\leq C\tau^{\min\{2-\alpha,\gamma\delta\}}.
		\end{split}
	\end{equation*}
Finally, the proof is completed by using the triangle inequality.
\end{proof}

\section{Numerical experiments}
\label{Sec5}
To justify the theoretical results, we perform extensive numerical experiments. We define the $L^2$ error between the numerical solution $u_h^n$  of the fully discrete scheme \eqref{fully-discrete3} and the exact solution $u(t_n,\x)$ of Eq.~\eqref{equivalent}
\begin{equation*}\label{example_1_0}
	E(h,M)=\sum_{n=1}^{M}\int_{t_{n-1}}^{t_n}\|\Pi_{1,n}u(t,\x)-\bar{u}_{h}(t,\x)\|dt,
\end{equation*}
where $\Pi_{1,n}u$ and $\bar{u}_{h}$ are defined by Theorem \ref{error}.

In the following examples, the $L^2$ errors and the spatial convergence orders of piecewise $Q^0$, $Q^1$, $Q^2$ polynomials to approximate the exact solution are calculated, respectively. In order to verify the effectiveness of the generalized numerical fluxes,  the fully discrete LDG scheme \eqref{fully-discrete3} with the generalized numerical fluxes with different parameters are used.

In particular, the generalized numerical flux is called the central numerical fluxes when $\sigma_1=1/2$, $\sigma_2=1/2$. According to the previous theoretical analyses, for the central numerical fluxes the optimal spatial convergence orders  cannot be obtained. However, we still numerically calculate the $L^2$ errors and the spatial convergence rates of the central numerical fluxes in this section and the numerical results show the effect of the fluxes on the spatial convergence rates. In addition, the influence of the change of the generalized numerical fluxes on the condition number of coefficient matrix is also observed.

\begin{example}\label{Example_1_1_}
	As the first example, we consider the homogeneous Feynman-Kac backward equation
	\begin{equation}\label{example_1_1_}
		{}_0^C D_t^{\alpha,-2}u(t,\x)=\Delta u(t,\x),\quad (t,\x)\in (0,T]\times\Omega,
	\end{equation}
	where $\Omega = [0,1] \times [0,1]$. The boundary condition is periodic and the initial value condition is
	\begin{equation*}\label{example_1_2_}
		u_0(\x)=\sin(2\pi y)\cos(2\pi x) .	
	\end{equation*}
	The exact solution of Eq.~\eqref{example_1_1_} with the above initial boundary value conditions is
	\begin{equation}\label{example_1_3_}
		u(t,\x)=e^{2t}E_{\alpha}(-8\pi^2t^{\alpha})\cos(2\pi x)\sin(2\pi y),	\end{equation}
	where $ E_{\alpha}(t)=\sum_{l=0}^{\infty}\dfrac{t^l}{\Gamma(l\alpha+1)} $, called Mittag-Leffler function.
\end{example}

In order to weaken the influence of temporal errors, we divide the time interval into sufficiently small parts when verifying the convergence rates of spatial errors. In Table \ref{example_1_1}, take $M=20$, $\gamma=2 $, $T=0.1 $,  and approximate the exact solution using piecewise $Q^0$ polynomial. In Table \ref{example_1_2}, take $M=20$, $\gamma=3 $, $T=1 $, and approximate the exact solution by piecewise $Q^1$ polynomial. In Table \ref{example_1_3}, take $M=100$, $\gamma=6 $, $T=0.1 $, and approximate the exact solution using $Q^2$ polynomial. The numerical simulation results show that the spatial convergence rates in Tables \ref{example_1_1}-\ref{example_1_3} are $ O(h) $, $ O(h^2) $, and $ O(h^3) $, respectively, which are in good agreement with the theoretical results \eqref{fullyconvergence9}.

Comparing Table \ref{example_1_1} with Table \ref{example_1_3}, it is not difficult to find that the change of  parameters of numerical fluxes has little effect on the spatial convergence rates using piecewise $Q^0$ and $Q^2$ polynomials, which seems to be contrary to the results by piecewise $Q^1$ polynomial in Table \ref{example_1_2}.

\begin{table}
	\caption{The $L^2$ errors and spatial convergence orders using piecewise $Q^0$ polynomial with the generalized alternating numerical fluxes in Example \ref{Example_1_1_}.}
	\label{example_1_1}       
	\begin{tabular}{lllll}
		\hline\noalign{\smallskip}
		$(\sigma_1,\sigma_2,\alpha)$$ \backslash $$h$ & ~~~~~~~~~1/12 & ~~~~~~~~~~~1/14 & ~~~~~~~~~~~1/16 & ~~~~~~~~~~~1/18 \\
		\noalign{\smallskip}\hline\noalign{\smallskip}
		~~$ (0,1,0.7)$ & ~~~~~2.9071e-04 &~~~~~~~2.4957e-04  &~~~~~~~2.1866e-04  &~~~~~~~1.9459e-04   \\
		 &~~~~~~~~Rates   &~~~~~~~~~~0.9898 &~~~~~~~~~~0.9903 &~~~~~~~~~~0.9901 \\
			\noalign{\smallskip}\hline\noalign{\smallskip}
		$ (0.6,0.3,0.7)$ & ~~~~~3.1971e-04 &~~~~~~~2.6880e-04  &~~~~~~~2.3232e-04  &~~~~~~~2.0483e-04   \\
		&~~~~~~~~Rates   &~~~~~~~~~~1.1253 &~~~~~~~~~~1.0921 &~~~~~~~~~~1.0694 \\
			\noalign{\smallskip}\hline\noalign{\smallskip}
		
		$ (0.4,0.5,0.7)$ & ~~~~~3.2375e-04 &~~~~~~~2.7139e-04  &~~~~~~~2.3412e-04  &~~~~~~~2.0614e-04   \\
		&~~~~~~~~Rates   &~~~~~~~~~~1.1444 &~~~~~~~~~~1.1065 &~~~~~~~~~~1.0806 \\
		\noalign{\smallskip}\hline
		
	\end{tabular}
\end{table}

\begin{table}
	\caption{The $L^2$ errors and spatial convergence orders using piecewise $Q^1$ polynomial with the generalized alternating numerical fluxes in Example \ref{Example_1_1_}.}
	\label{example_1_2}       
	\begin{tabular}{lllll}
		\hline\noalign{\smallskip}
		$(\sigma_1,\sigma_2,\alpha)$$ \backslash $$h$ & ~~~~~~~~~1/12 & ~~~~~~~~~~~1/14 & ~~~~~~~~~~~1/16 & ~~~~~~~~~~~1/18 \\
		\noalign{\smallskip}\hline\noalign{\smallskip}
		~~$ (0,1,0.5)$ & ~~~~~2.2780e-04 &~~~~~~~1.6766e-04  &~~~~~~~1.2852e-04  &~~~~~~~1.0165e-04   \\
		&~~~~~~~~Rates   &~~~~~~~~~~1.9886 &~~~~~~~~~~1.9908 &~~~~~~~~~~1.9917 \\
		\noalign{\smallskip}\hline\noalign{\smallskip}
    	~~~~~------ & ~~~~~~~~~1/20 & ~~~~~~~~~~~1/22 & ~~~~~~~~~~~1/24 & ~~~~~~~~~~~1/26 \\
		\noalign{\smallskip}\hline\noalign{\smallskip}
		$ (0.1,0.8,0.5)$ & ~~~~~1.0563e-04 &~~~~~~~8.7725e-05
		&~~~~~~~7.4005e-05
		&~~~~~~~6.3265e-05   \\
		&~~~~~~~~Rates   &~~~~~~~~~~1.9484 &~~~~~~~~~~1.9547 &~~~~~~~~~~1.9589 \\
		\noalign{\smallskip}\hline\noalign{\smallskip}
		~~~~~------ & ~~~~~~~~~1/32 & ~~~~~~~~~~~1/36 & ~~~~~~~~~~~1/40 & ~~~~~~~~~~~1/44 \\
		\noalign{\smallskip}\hline\noalign{\smallskip}
		$ (0.3,0.7,0.5)$ & ~~~~~6.3955e-05 &~~~~~~~5.1027e-05  &~~~~~~~4.1653e-05  &~~~~~~~3.4652e-05   \\
		&~~~~~~~~Rates   &~~~~~~~~~~1.9173 &~~~~~~~~~~1.9266 &~~~~~~~~~~1.9306 \\
		\noalign{\smallskip}\hline
	\end{tabular}
\end{table}

\begin{table}
\caption{The $L^2$ errors and spatial convergence orders using piecewise $Q^2$ polynomial with the generalized alternating numerical fluxes in Example \ref{Example_1_1_}.}
	\label{example_1_3}       
	\begin{tabular}{lllll}
		\hline\noalign{\smallskip}
		$(\sigma_1,\sigma_2,\alpha)$$ \backslash $$h$ & ~~~~~~~~~1/10 & ~~~~~~~~~~~1/12 & ~~~~~~~~~~~1/14 & ~~~~~~~~~~~1/16 \\
		\noalign{\smallskip}\hline\noalign{\smallskip}
		~~$ (1,0,0.3)$ & ~~~~~1.9815e-06 &~~~~~~~1.1514e-06  &~~~~~~~7.2730e-07  &~~~~~~~4.8878e-07   \\
		&~~~~~~~~Rates   &~~~~~~~~~~2.9778 &~~~~~~~~~~2.9800 &~~~~~~~~~~2.9762 \\
		\noalign{\smallskip}\hline\noalign{\smallskip}
		$ (0.8,0.3,0.3)$ & ~~~~~1.5636e-06 &~~~~~~~8.9614e-07  &~~~~~~~5.6167e-07  &~~~~~~~3.7590e-07   \\
		&~~~~~~~~Rates   &~~~~~~~~~~3.0532 &~~~~~~~~~~3.0306 &~~~~~~~~~~3.0076 \\
		\noalign{\smallskip}\hline\noalign{\smallskip}
		
		$ (0.6,0.5,0.3)$ & ~~~~~1.3772e-06 &~~~~~~~7.8569e-07  &~~~~~~~4.9134e-07  &~~~~~~~3.2858e-07   \\
		&~~~~~~~~Rates   &~~~~~~~~~~3.0782 &~~~~~~~~~~3.0452 &~~~~~~~~~~3.0132 \\
		\noalign{\smallskip}\hline
		
	\end{tabular}
\end{table}
		
\begin{example}\label{Example_2_1_}
	Let us further consider the inhomogeneous Feynman-Kac backward equation.
	\begin{equation}\label{example_2_1_}
		{}_0^C D_t^{\alpha,\cos(2\pi x)}u(t,\x)=\Delta u(t,\x)+f(t,\x),\quad (t,\x)\in (0,0.1]\times\Omega,
	\end{equation}
	where $\Omega = [0,1] \times [0,1]$. The boundary condition is periodic and the initial value condition is $u_0(\x)=0$. Let the exact solution of Eq.~\eqref{example_2_1_} be $u(t,\x)=e^{-t\cos(2\pi x)}t^{\delta}\cos(2\pi x)\sin(2\pi y)$. Then one can choose the source term
	\begin{equation*}\label{example_1_5}
		\begin{split}
			f(t,\x)&=e^{-t\cos(2\pi x)}\Big(\dfrac{\Gamma(1+\delta)}{\Gamma(\delta+1-\alpha)}t^{\delta-\alpha}\Big)\cos(2\pi x)\sin(2\pi y)\\
			&~~~~+\Big(2-t\cos(2\pi x)\Big)g(t,\x)+ \Big(2+t\cos(2\pi x)\Big)h(t,\x),\\	
		\end{split}
	\end{equation*}
	where
	\begin{equation*}\label{example_1_6}
		\begin{split}
			&g(t,\x)=4t^{\delta}\pi^2e^{-t\cos(2\pi x)}\cos(2\pi x)\sin(2\pi y),	\\
			&h(t,\x)=4t^{\delta+1}\pi^2e^{-t\cos(2\pi x)}\sin^2(2\pi x)\sin(2\pi y),	\\
		\end{split}
	\end{equation*}	
	and the regularity parameter $ \delta\in(0,1)\cup(1,2) $.
\end{example}

First, we take $\delta=\alpha$ to verify the spatial convergence orders. In Table \ref{example_2_1}, take $M=20$, $\gamma=2 $, and approximate the exact solution using piecewise constant function. In Table \ref{example_2_3}, take $M=20$, $\gamma=3 $, and approximate the exact solution by piecewise $Q^2$ polynomial. In Table \ref{example_2_5}, take $M=100$, $\gamma=6 $, and approximate the exact solution using piecewise $Q^2$ polynomial. The numerical simulation results show that the spatial convergence orders in Tables \ref{example_2_1}-\ref{example_2_5} are $ O(h) $, $ O(h^2) $, and $ O(h^3) $, respectively, which are in good agreement with the theoretical results \eqref{fullyconvergence9}.

In Example \ref{Example_2_1_}, we verify the influence of central fluxes on the  orders of convergence in detail. By choosing different parameters of numerical fluxes in Table \ref{example_2_3}, it can be noted that although the spatial orders of convergence can reach order $ O(h^2) $ stably when using piecewise $Q^1$ polynomial, the closer the numerical fluxes are to the central ones the finer the subdivision required for the spatial convergence orders to reach stability. More than that, the numerical results in Table \ref{example_2_3} also show that the spatial convergence orders are only the first order when the numerical fluxes are taken as the central ones, which is consistent with the existing conclusion \cite{Hesthaven-2008}.

\begin{table}
	\caption{The $L^2$ errors and spatial convergence orders using piecewise $Q^0$ polynomial with the generalized alternating numerical fluxes in Example \ref{Example_2_1_}.}
	\label{example_2_1}       
	\begin{tabular}{lllll}
		\hline\noalign{\smallskip}
		$(\sigma_1,\sigma_2,\alpha)$$ \backslash $$h$ & ~~~~~~~~~1/12 & ~~~~~~~~~~~1/14 & ~~~~~~~~~~~1/16 & ~~~~~~~~~~~1/18 \\
		\noalign{\smallskip}\hline\noalign{\smallskip}
		~~$ (1,0,0.3)$ & ~~~~~5.3743e-03 &~~~~~~~4.6124e-03  &~~~~~~~4.0391e-03  &~~~~~~~3.5923e-03   \\
		&~~~~~~~~Rates   &~~~~~~~~~~0.9918 &~~~~~~~~~~0.9938 &~~~~~~~~~~0.9952 \\
		\noalign{\smallskip}\hline\noalign{\smallskip}
		$ (0.7,0.2,0.3)$ & ~~~~~5.6634e-03 &~~~~~~~4.7934e-03  &~~~~~~~4.1600e-03  &~~~~~~~3.6770e-03   \\
		&~~~~~~~~Rates   &~~~~~~~~~~1.0819 &~~~~~~~~~~1.0615 &~~~~~~~~~~1.0478 \\
		\noalign{\smallskip}\hline\noalign{\smallskip}
		$ (0.3,0.6,0.3)$ & ~~~~~5.7668e-03 &~~~~~~~4.8576e-03  &~~~~~~~4.2026e-03  &~~~~~~~3.7067e-03   \\
		&~~~~~~~~Rates   &~~~~~~~~~~1.1130 &~~~~~~~~~~1.0848 &~~~~~~~~~~1.0660 \\	
		\noalign{\smallskip}\hline\noalign{\smallskip}
		~~$ (0,1,0.7)$ & ~~~~~2.1386e-03 &~~~~~~~1.8356e-03  &~~~~~~~1.6075e-03  &~~~~~~~1.4298e-03   \\
		&~~~~~~~~Rates   &~~~~~~~~~~0.9911 &~~~~~~~~~~0.9933 &~~~~~~~~~~0.9948 \\
		\noalign{\smallskip}\hline\noalign{\smallskip}
		$ (0.3,0.8,0.7)$ & ~~~~~2.2450e-03 &~~~~~~~1.9022e-03  &~~~~~~~1.6520e-03  &~~~~~~~1.4609e-03   \\
		&~~~~~~~~Rates   &~~~~~~~~~~1.0748 &~~~~~~~~~~1.0562 &~~~~~~~~~~1.0438 \\
		\noalign{\smallskip}\hline\noalign{\smallskip}
		
		$ (0.4,0.5,0.7)$ & ~~~~~2.3053e-03 &~~~~~~~1.9396e-03  &~~~~~~~1.6768e-03  &~~~~~~~1.4781e-03   \\
		&~~~~~~~~Rates   &~~~~~~~~~~1.1205 &~~~~~~~~~~1.0904 &~~~~~~~~~~1.0704 \\
		\noalign{\smallskip}\hline
		
	\end{tabular}
\end{table}
		
\begin{table}
	\caption{The $L^2$ errors and spatial convergence orders using piecewise $Q^1$ polynomial with the generalized alternating numerical fluxes in Example \ref{Example_2_1_}.}
	\label{example_2_3}       
	\begin{tabular}{lllll}
		\hline\noalign{\smallskip}
		$(\sigma_1,\sigma_2,\alpha)$$ \backslash $$h$ & ~~~~~~~~~1/12 & ~~~~~~~~~~~1/14 & ~~~~~~~~~~~1/16 & ~~~~~~~~~~~1/18 \\
		\noalign{\smallskip}\hline\noalign{\smallskip}
		~~$ (1,0,0.3)$ & ~~~~~5.9669e-04 &~~~~~~~4.3915e-04  &~~~~~~~3.3660e-04  &~~~~~~~2.6616e-04   \\
		&~~~~~~~~Rates   &~~~~~~~~~~1.9888 &~~~~~~~~~~1.9917 &~~~~~~~~~~1.9935 \\
		\noalign{\smallskip}\hline\noalign{\smallskip}
		~~$ (0,1,0.7)$ & ~~~~~2.3747e-04 &~~~~~~~1.7479e-04  &~~~~~~~1.3399e-04  &~~~~~~~1.0596e-04   \\
		&~~~~~~~~Rates   &~~~~~~~~~~1.9879 &~~~~~~~~~~1.9908 &~~~~~~~~~~1.9925 \\
		\noalign{\smallskip}\hline\noalign{\smallskip}
		$ (0.5,0.5,0.3)$ & ~~~~~1.1287e-03 &~~~~~~~9.5578e-04  &~~~~~~~8.2960e-04  &~~~~~~~7.3330e-04   \\
		&~~~~~~~~Rates   &~~~~~~~~~~1.0786 &~~~~~~~~~~1.0603 &~~~~~~~~~~1.0476 \\	
		\noalign{\smallskip}\hline\noalign{\smallskip}
		$ (0.5,0.5,0.7)$ & ~~~~~4.4600e-04 &~~~~~~~3.7780e-04  &~~~~~~~3.2800e-04  &~~~~~~~2.8997e-04   \\
		&~~~~~~~~Rates   &~~~~~~~~~~1.0765 &~~~~~~~~~~1.0586 &~~~~~~~~~~1.0462 \\
		\noalign{\smallskip}\hline\noalign{\smallskip}
		~~~~~------ & ~~~~~~~~~1/20 & ~~~~~~~~~~~1/22 & ~~~~~~~~~~~1/24 & ~~~~~~~~~~~1/26 \\
		\noalign{\smallskip}\hline\noalign{\smallskip}
		$ (0.9,0.2,0.3)$ & ~~~~~2.7614e-04 &~~~~~~~2.2929e-04  &~~~~~~~1.9337e-04  &~~~~~~~1.6524e-04   \\
		&~~~~~~~~Rates   &~~~~~~~~~~1.9506 &~~~~~~~~~~1.9583 &~~~~~~~~~~1.9643 \\
		\noalign{\smallskip}\hline\noalign{\smallskip}
		$ (0.1,0.8,0.7)$ & ~~~~~1.0993e-04 &~~~~~~~9.1293e-05  &~~~~~~~7.7003e-05  &~~~~~~~6.5812e-05   \\
		&~~~~~~~~Rates   &~~~~~~~~~~1.9491 &~~~~~~~~~~1.9564 &~~~~~~~~~~1.9620 \\
		\noalign{\smallskip}\hline\noalign{\smallskip}
		~~~~~------ & ~~~~~~~~~1/32 & ~~~~~~~~~~~1/36 & ~~~~~~~~~~~1/40 & ~~~~~~~~~~~1/44 \\
		\noalign{\smallskip}\hline\noalign{\smallskip}
		$ (0.7,0.3,0.3)$ & ~~~~~1.6712e-04 &~~~~~~~1.3325e-04  &~~~~~~~1.0865e-04  &~~~~~~~9.0246e-05   \\
		&~~~~~~~~Rates   &~~~~~~~~~~1.9228 &~~~~~~~~~~1.9370 &~~~~~~~~~~1.9475 \\	
		\noalign{\smallskip}\hline\noalign{\smallskip}
		$ (0.2,0.7,0.7)$ & ~~~~~5.7992e-05 &~~~~~~~4.6161e-05  &~~~~~~~3.7604e-05  &~~~~~~~3.1222e-05   \\
		&~~~~~~~~Rates   &~~~~~~~~~~1.9371 &~~~~~~~~~~1.9461 &~~~~~~~~~~1.9514 \\
		\noalign{\smallskip}\hline	
	\end{tabular}
\end{table}

\begin{table}
	\caption{The $L^2$ errors and spatial convergence orders using piecewise $Q^2$ polynomial with the generalized alternating numerical fluxes in Example \ref{Example_2_1_}.}
	\label{example_2_5}       
	\begin{tabular}{lllll}
		\hline\noalign{\smallskip}
		$(\sigma_1,\sigma_2,\alpha)$$ \backslash $$h$ & ~~~~~~~~~1/10 & ~~~~~~~~~~~1/12 & ~~~~~~~~~~~1/14 & ~~~~~~~~~~~1/16 \\
		\noalign{\smallskip}\hline\noalign{\smallskip}
		~~$ (1,0,0.3)$ & ~~~~~4.4230e-05 &~~~~~~~2.5701e-05  &~~~~~~~1.6225e-05  &~~~~~~~1.0888e-05   \\
		&~~~~~~~~Rates   &~~~~~~~~~~2.9775 &~~~~~~~~~~2.9838 &~~~~~~~~~~2.9876 \\	
		\noalign{\smallskip}\hline\noalign{\smallskip}
		$ (0.8,0.3,0.3)$ & ~~~~~3.5359e-05 &~~~~~~~2.0197e-05  &~~~~~~~1.2621e-05  &~~~~~~~8.4140e-06   \\
		&~~~~~~~~Rates   &~~~~~~~~~~3.0716 &~~~~~~~~~~3.0500 &~~~~~~~~~~3.0366 \\
		\noalign{\smallskip}\hline\noalign{\smallskip}
		$ (0.6,0.5,0.3)$ & ~~~~~3.1299e-05 &~~~~~~~1.7729e-05  &~~~~~~~1.1032e-05  &~~~~~~~7.3361e-06   \\
		&~~~~~~~~Rates   &~~~~~~~~~~3.1175 &~~~~~~~~~~3.0776 &~~~~~~~~~~3.0551 \\
		\noalign{\smallskip}\hline\noalign{\smallskip}
		~~$ (0,1,0.7)$ & ~~~~~1.7602e-05 &~~~~~~~1.0232e-05  &~~~~~~~6.4639e-06  &~~~~~~~4.3426e-06   \\
		&~~~~~~~~Rates   &~~~~~~~~~~2.9754 &~~~~~~~~~~2.9797 &~~~~~~~~~~2.9787 \\
		\noalign{\smallskip}\hline\noalign{\smallskip}
		$ (0.2,0.7,0.7)$ & ~~~~~1.4075e-05 &~~~~~~~8.0437e-06  &~~~~~~~5.0315e-06  &~~~~~~~3.3608e-06   \\
		&~~~~~~~~Rates   &~~~~~~~~~~3.0689 &~~~~~~~~~~3.0436 &~~~~~~~~~~3.0221 \\
		\noalign{\smallskip}\hline\noalign{\smallskip}
		$ (0.4,0.6,0.7)$ & ~~~~~1.2600e-05 &~~~~~~~7.1447e-06  &~~~~~~~4.4523e-06  &~~~~~~~2.9686e-06   \\
		&~~~~~~~~Rates   &~~~~~~~~~~3.1118 &~~~~~~~~~~3.0681 &~~~~~~~~~~3.0355 \\
		\noalign{\smallskip}\hline
		
	\end{tabular}
\end{table}

Theorem \ref{error} shows that the temporal convergence orders  depend on the regularity of the solution. For the solution with well regularity ($\delta$ large) even if the small mesh parameter $\gamma$ is used, we can still achieve an optimal time accuracy of order $  O(\tau^{2-\alpha})  $, that is, when $\gamma\delta\geq2-\alpha $, the temporal convergence orders $  O(\tau^{2-\alpha})  $ can be got. Therefore, we take $\gamma\geq\max{\{(2-\alpha)/\delta,1\}}$ in order to observe the optimal temporal convergence orders. Now, we set the space stepsize as $h=M^{(\alpha-2)/(k+1)}$ in Table \ref{example_2_7} based on the optimal spatial convergence orders. Here, we take parameters of numerical flux $\sigma_1=0$, $\sigma_2=1$, and the degree $ k =0$. The numerical results in Table \ref{example_2_7} show that the temporal convergence rates can reach the optimal accuracy of order $ O(\tau^{2-\alpha}) $, which is consistent with the theoretical result \eqref{fullyconvergence9}.	

\begin{table}
	\caption{The $L^2$ errors and temporal convergence orders using piecewise $Q^0$ polynomial with the generalized alternating numerical fluxes in Example \ref{Example_2_1_}.}
	\label{example_2_7}       
	\begin{tabular}{lllll}
		\hline\noalign{\smallskip}
		$(\alpha,\delta,\gamma)$$ \backslash $$M$ & ~~~~~~~~~~~~2 & ~~~~~~~~~~~~~~6 & ~~~~~~~~~~~~~9 & ~~~~~~~~~~~~~11 \\
		\noalign{\smallskip}\hline\noalign{\smallskip}
		$ (0.3,0.3,6)$ &~~~~~~2.0005e-02  &~~~~~~~~3.0802e-03  &~~~~~~~1.5421e-03  &~~~~~~~1.0980e-03   \\
		&~~~~~~~~~Rates   &~~~~~~~~~~~1.7031 &~~~~~~~~~~1.7063 &~~~~~~~~~~1.6925 \\	
		\noalign{\smallskip}\hline\noalign{\smallskip}
		$ (0.3,1.4,2)$ & ~~~~~~1.5912e-03 &~~~~~~~~2.4475e-04  &~~~~~~~1.2252e-04  &~~~~~~~8.7230e-05   \\
		&~~~~~~~~~Rates   &~~~~~~~~~~~1.7040 &~~~~~~~~~~1.7067 &~~~~~~~~~~1.6928 \\	
		\noalign{\smallskip}\hline\noalign{\smallskip}
		~~~~------ & ~~~~~~~~~~~~3 & ~~~~~~~~~~~~~~5 & ~~~~~~~~~~~~~9 & ~~~~~~~~~~~~~13 \\
		\noalign{\smallskip}\hline\noalign{\smallskip}
		$(0.5,0.5,3)$ & ~~~~~~7.9394e-03 &~~~~~~~~3.6935e-03  &~~~~~~~1.5126e-03  &~~~~~~~8.6957e-04   \\
		&~~~~~~~~~Rates   &~~~~~~~~~~~1.4981 &~~~~~~~~~~1.5189 &~~~~~~~~~~1.5054 \\
		\noalign{\smallskip}\hline\noalign{\smallskip}
		$(0.5,1.6,1)$ & ~~~~~~6.3123e-04 &~~~~~~~~2.9358e-04  &~~~~~~~1.2019e-04  &~~~~~~~6.9085e-05   \\
		&~~~~~~~~~Rates   &~~~~~~~~~~~1.4986 &~~~~~~~~~~1.5194 &~~~~~~~~~~1.5058 \\
		\noalign{\smallskip}\hline\noalign{\smallskip}
		~~~~------ & ~~~~~~~~~~~~2 & ~~~~~~~~~~~~~~4 & ~~~~~~~~~~~~~8 & ~~~~~~~~~~~~~16 \\
		\noalign{\smallskip}\hline\noalign{\smallskip}
		$ (0.7,0.7,2)$ & ~~~~~~1.0050e-02 &~~~~~~~~4.2074e-03  &~~~~~~~1.7136e-03  &~~~~~~~6.9677e-04   \\
		&~~~~~~~~~Rates   &~~~~~~~~~~~1.2562 &~~~~~~~~~~1.2959 &~~~~~~~~~~1.2983 \\
		\noalign{\smallskip}\hline\noalign{\smallskip}
		$ (0.7,1.8,1)$ & ~~~~~~7.9832e-04 &~~~~~~~~3.3481e-04  &~~~~~~~1.3626e-04  &~~~~~~~5.5376e-05   \\
		&~~~~~~~~~Rates   &~~~~~~~~~~~1.2536 &~~~~~~~~~~1.2970 &~~~~~~~~~~1.2991 \\
		\noalign{\smallskip}\hline	
	\end{tabular}
\end{table}	

In the last part of this section, we reveal the relationship between the condition number of the coefficient matrix of the fully discrete scheme \eqref{fully-discrete3} and the numerical flux and degree of polynomials.

Taking $\{\phi_{i}^r(x)\phi_{j}^s(y)\}_{r,s=0}^k$ as the basis of $Q^k$ on the element $\Omega_{ij}$, our implementation uses the orthogonal Legendre polynomials, and express $	u_h^n(\x)\in V_{h}^{k}$
as
\begin{equation}\label{example_3_1}
	\begin{split}
		u_h^n\big|_{\Omega_{ij}}=\sum_{r=0}^{k}\sum_{s=0}^{k}u_{ij}^{rs}(t_n)\phi_{i}^r(x)\phi_{j}^s(y),
	\end{split}
\end{equation}
with $u_{ij}^{rs}(t_n)$  being the unknown coefficients of numerical solution $u_h^n$.
Introducing $ u_h^n $ \eqref{example_3_1} into the fully discrete scheme \eqref{fully-discrete3} leads to its matrix form
\begin{equation}\label{example_3_2}
	\begin{split}
		F_nU^n=\sum_{l=0}^{n-1}G_lU^l,\quad n=1,2,\cdots,M,
	\end{split}
\end{equation}
where $ U^n $ is the
coefficient vector of $u_h^n$.

We take $ M = 100 $, $h=1/12$, $T=0.1$, and calculate the condition number of $F_M$. In Table \ref{example_3_1_}, we find that the condition number of $F_M$ decreases as the numerical fluxes approach the central ones. On the other hand, the condition number of $F_M$ increases gradually with the increase of polynomial degree. 
The condition numbers in Table \ref{example_3_1_} are not very large, implying that the equation still can be effectively solved 
 using the matrix form \eqref{example_3_2}.

\begin{table}
	\caption{Condition numbers of the coefficient matrix $F_M$ for the fully scheme \eqref{fully-discrete3}.}
	\label{example_3_1_}       
	\begin{tabular}{lllll}
		\hline\noalign{\smallskip}
		$(Q^k,\alpha)$$ \backslash $$(\sigma_1,\sigma_2)$ & ~~~~~~~~(1,0) & ~~~~~~~(0.9,0.1) & ~~~~~~(0.8,0.2) & ~~~~~~(0.7,0.3) \\
		\noalign{\smallskip}\hline\noalign{\smallskip}
		~~~~~$ (Q^0,0.3)$ & ~~~2.2489e+02 &~~~~~1.4429e+02  &~~~~8.8317e+01  &~~~~6.5928e+01   \\
		\noalign{\smallskip}\hline\noalign{\smallskip}
		~~~~~$ (Q^1,0.3)$ & ~~~9.3401e+02 &~~~~~8.4303e+02  &~~~~7.7026e+02  &~~~~7.1675e+02   \\
		\noalign{\smallskip}\hline\noalign{\smallskip}
		~~~~~$ (Q^2,0.3)$ & ~~~2.5750e+03 &~~~~~1.8470e+03  &~~~~1.2961e+03  &~~~~9.5453e+02   \\\noalign{\smallskip}\hline\noalign{\smallskip}
		~~~~~$(Q^0,0.7)$ & ~~~1.4294e+01 &~~~~~9.5084e+00  &~~~~6.1848e+00  &~~~~4.8554e+00   \\
		\noalign{\smallskip}\hline\noalign{\smallskip}
		~~~~~$ (Q^1,0.7)$ & ~~~5.6369e+01 &~~~~~5.0972e+01  &~~~~4.6659e+01  &~~~~4.3490e+01   \\
		\noalign{\smallskip}\hline\noalign{\smallskip}
		~~~~~$ (Q^2,0.7)$ & ~~~1.5365e+02 &~~~~~1.1046e+02  &~~~~7.7802e+01  &~~~~5.7456e+01   \\
		\noalign{\smallskip}\hline
		
	\end{tabular}
\end{table}

\section{Conclusion}
\label{Sec6}

The non-Brownian functional is an important class of statistical observables, the distribution of which is governed by Feynman-Kac equation \cite{Deng-2022}. In this paper, we first derive the equivalent form of the backward Feynman-Kac equation. Then the spatial semi-discrete scheme is constructed by LDG method. Following the properties of the fractional substantial calculus, the stability  and optimal spatial convergence orders $O(h^{k+1})$ of the semi-discrete scheme with the generalized alternating numerical fluxes are obtained. Based on the theoretical results of the semi-discrete scheme, we obtain the optimal convergence orders $O(h^{k+1}+\tau^{\min\{2-\alpha,\gamma\delta\}})$ of the fully discrete scheme. Finally, extensive numerical experiments are carried out to demonstrate the validity of the numerical scheme and justify the theoretical findings. The closer the numerical fluxes are to the central ones the finer the subdivision are required for the spatial convergence orders to reach stability by using piecewise odd polynomial. In particular, if using piecewise odd polynomial and central numerical fluxes, the spatial convergence orders are just   $O(h^{k})$. In addition, as the numerical fluxes approach the central ones and the degree of polynomial decreases, the condition number of the coefficient matrix of the fully discrete scheme decreases gradually.

\section*{Appendix}
Equivalent form of Eq.~\eqref{first equation}.
Let $0<\alpha<1$ and $u(t,\x)\in C_{2,\delta}((0,T];H^{2}(\Omega))$. Then
\begin{equation}\label{appendix1}
	\partial_{t}^{\omega,\kappa(\x)}u(t,\x)=\Delta u(t,\x)-\kappa(\x)I_{t}^{\omega,\kappa(\x)}u(t,\x),
\end{equation}
is equivalent to
\begin{equation*}\label{appendix2}
	e^{-\kappa(\x)t}{}_0^C D_t^{\alpha}(e^{\kappa(\x)t}u(t,\x))=\Delta u(t,\x),
\end{equation*}
where $ C_{2,\delta}((0,T])= \{u(t)\big|\ |u^{(l)}|\leq C_u(1+t^{\delta-l}),\ l=0,1,2,\ 0<t\leq T\}$ and the regularity parameter $ \delta\in(0,1)\cup(1,2) $.

\begin{proof}
	In the following, we define Laplace transform of $u(t,\x)$ about the time variable as
	\begin{equation*}\label{appendix2-1}
		\hat{u}(s,\x):=\mathscr{L}\{u(t,\x);s\}=\int_{0}^{\infty}e^{-st} u(t,\x)dt.
	\end{equation*}
	Let $\partial_{t}^{\omega,\kappa(x)}u(t,\x)=\frac{\partial }{\partial t}g(t,\x)$ in \eqref{TFD1}. According to the properties of Laplace transform, there exist
	\begin{equation*}\label{appendix3}
		\mathscr{L}\{\partial_{t}^{\omega,\kappa(x)}u(t,\x);s\}=s\hat{g}(s,\x)-g(0,\x)
	\end{equation*}
	and
	\begin{equation*}\label{appendix4}
		\hat{g}(s,\x)=\frac{1}{(s+\kappa(\x))^{1-\alpha}}(\hat{u}(s,\x)-\frac{1}{s}{u}(0,\x)).
	\end{equation*}
	
	Since $u(t,\x)\in C_{2,\delta}((0,T];H^{2}(\Omega))$, one can get
	\begin{equation*}\label{appendix5}
		\begin{split}
			\|g(0,\x)\|&= \lim_{t\rightarrow0^{+}}\frac{1}{\Gamma(1-\alpha)}\Big\|\int_{0}^{t}e^{-\kappa(\x)(t-\tau)}(t-\tau)^{-\alpha}(u(\tau,\x)-u(0,\x))d\tau\Big\|
			\\
			&\leq\lim_{t\rightarrow0^{+}}\frac{C_{max}}{\Gamma(1-\alpha)}\int_{0}^{t}(t-\tau)^{-\alpha}\|u(\tau,\x)-u(0,\x)\|d\tau
			\\
			&\leq\lim_{t\rightarrow0^{+}}\frac{2C_{max}C_{u}}{\Gamma(1-\alpha)}\int_{0}^{t}(t-\tau)^{-\alpha}(1+\tau^{\delta})d\tau
			\\
			&=0,
		\end{split}
	\end{equation*}
	where we denote $\|\cdot\| $ as the norms associated with $ L^2(\Omega) $ and $C_{max}$ is defined by \eqref{assumptions2}. One can further get $g(0,\x)=0 $.
Therefore
	\begin{equation}\label{appendix6}
		\mathscr{L}\{\partial_{t}^{\omega,\kappa(\x)}u(t,\x);s\}=\frac{1}{(s+\kappa(\x))^{1-\alpha}}(s\hat{u}(s,\x)-{u}(0,\x)).
	\end{equation}
Using inverse Laplace transform for \eqref{appendix6} results in
	\begin{equation}\label{appendix7}
		\partial_{t}^{\omega,\kappa(\x)}u(t,\x)=\int_{0}^{t}\frac{(t-\tau)^{-\alpha}}{\Gamma(1-\alpha)}e^{-\kappa(\x)(t-\tau)}\frac{\partial }{\partial  \tau}u(\tau,\x)d\tau.
	\end{equation}
	Substituting \eqref{appendix7} and \eqref{TFI} into \eqref{appendix1} leads to
	\begin{equation*}\label{appendix8}
		\begin{split}
			\Delta u(t,\x)&=
			\partial_{t}^{\omega,\kappa(\x)}u(t,\x)+\kappa(\x)I_{t}^{\omega,\kappa(\x)}u(t,\x)
			\\&=\frac{1}{\Gamma(1-\alpha)}\int_{0}^{t}e^{-\kappa(\x)(t-\tau)}(t-\tau)^{-\alpha}\Big[\frac{\partial }{\partial  \tau}u(\tau,\x)+\kappa(\x)u(\tau,\x)\Big]d\tau \\&=e^{-\kappa(\x)t}{}_0^C D_t^{\alpha}(e^{\kappa(\x)t}u(t,\x)),
		\end{split}
	\end{equation*}
	which concludes the proof.
\end{proof}


%


%
%

\bibliographystyle{spmpsci}      
\bibliography{name}   


\end{document}